\renewcommand{\leq}{\leqslant}
\renewcommand{\geq}{\geqslant}
\renewcommand{\le}{\leqslant}
\renewcommand{\ge}{\geqslant}
\newcommand{\ptl}{\partial}
\newcommand{\norm}[1]{\| #1 \|}
\newcommand{\rr}{{\mathbb{R}}}
\newcommand{\hh}{{\mathbb{H}}}
\newcommand{\nn}{{\mathbb{N}}}
\newcommand{\la}{\lambda}
\newcommand{\Om}{\Omega}
\newcommand{\eps}{\varepsilon}
\newcommand{\ga}{\gamma}
\newcommand{\escpr}[1]{\langle#1\rangle}
\newcommand{\mh}{\mathcal{H}}
\newcommand{\df}{d_{K_,\partial\Om}}
\newcommand{\dfz}{d_{K_0,\partial\Om}}
\newcommand{\scu}{\longrightarrow}
\definecolor{champagne}{rgb}{0.97, 0.91, 0.81}
\definecolor{asparagus}{rgb}{0.53, 0.66, 0.42}
\DeclareMathOperator{\divv}{div}
\DeclareMathOperator{\intt}{int}
\newtheorem{theorem}{Theorem}[section]
\newtheorem{proposition}[theorem]{Proposition}
\newtheorem{lemma}[theorem]{Lemma}
\theoremstyle{definition}
\newtheorem{remark}[theorem]{Remark}
\newtheorem{definition}[theorem]{Definition} 
\theoremstyle{remark}
\numberwithin{equation}{section}
\begin{document}

\title[The PMC equation in the sub-Finsler Heisenberg Group]{The prescribed mean curvature equation for $t$-graphs in the Sub-Finsler Heisenberg group $\hh^n$}

\author[G. Giovannardi]{Gianmarco Giovannardi}
\address[Gianmarco Giovannardi]{Dipartimento di Matematica e Informatica "U. Dini", Università degli Studi di Firenze, Viale Morgani 67/A, 50134, Firenze, Italy}
\email{gianmarco.giovannardi@unifi.it}

\author[A.~Pinamonti]{Andrea Pinamonti}
\address[Andrea Pinamonti]{Dipartimento di Matematica\\ Università di Trento\\
Via Sommarive, 14, 38123 Povo TN}
\email{andrea.pinamonti@unitn.it}

\author[J.~Pozuelo]{Juli\'an Pozuelo} 
\address[Juli\'an Pozuelo]{Dipartamento di Matematica “Tullio Levi-Civita” \\
Università di Padova \\ Via Trieste, 63, 35131, Padova, Italy}
\email{julian.pozuelodominguez@unipd.it}

\author[S.~Verzellesi]{Simone Verzellesi} 
\address[Simone Verzellesi]{Dipartimento di Matematica\\ Università di Trento\\
Via Sommarive, 14, 38123 Povo TN}
\email{simone.verzellesi@unitn.it}

\date{\today}
\thanks{
The first, the second and the fourth-named authors are members of the Istituto Nazionale di Alta Matematica (INdAM), Gruppo Nazionale per l'Analisi Matematica, la Probabilità e le loro Applicazioni (GNAMPA).
 G. Giovannardi is supported by INdAM–GNAMPA 2022 Project \emph{Analisi geometrica in strutture subriemanniane} and by MIUR-PRIN 2022 Project \emph{Geometric-Analytic Methods
for PDEs and Applications}. 
A. Pinamonti and G. Giovannardi are supported by MIUR-PRIN 2017 Project \emph{Gradient flows, Optimal Transport and Metric Measure Structures}.  J. Pozuelo is supported by the research grant PID2020-118180GB-I00 funded by MCIN/AEI/10.13039/501100011033 and  Fundación Ramón Areces grants \emph{XXXV convocatoria para la amplación de estudios en el extranjero en ciencias de la vida y la materia}.
S. Verzellesi is supported by INdAM–GNAMPA 2023 Project \emph{Equazioni differenziali alle derivate parziali di tipo misto o dipendenti da campi di vettori}. A. Pinamonti and S. Verzellesi are supported by MIUR-PRIN 2022 Project \emph{Regularity problems in sub-Riemannian structures}.}
\subjclass[]{53C17, 49Q10, 53A10}
\keywords{Prescribed mean curvature; Constant mean curvature; Heisenberg group; Sub-Finsler structure}

\bibliographystyle{abbrv} 

\maketitle

\thispagestyle{empty}
\begin{abstract}
    We study the sub-Finsler prescribed mean curvature equation, associated to a strictly convex body $K_0 \subseteq \rr^{2n}$, for $t$-graphs on a bounded domain $\Omega$ in the Heisenberg group $\hh^n$. When the prescribed datum $H$ is constant and strictly smaller than the Finsler mean curvature of $\partial \Omega$, we prove the existence of a Lipschitz solution to the Dirichlet problem for the sub-Finsler CMC equation by means of a Finsler approximation scheme. 
\end{abstract}

\section{Introduction}
The aim of this work is to study the prescribed mean curvature equation for $t$-graphs in the  Heisenberg group $\hh^{n}$ with a sub-Finsler structure. In the Heisenberg group, which can be identified with $\rr^{2n+1}$ endowed with a suitable non-Euclidean group law, a sub-Finsler structure is defined by means of an asymmetric left-invariant norm $\norm{\cdot}_{K_0}$ on the horizontal distribution of $\hh^n$ associated to a convex body $K_0\subseteq\rr^{2n}$ containing the origin in its interior. Let $\Omega \subseteq \rr^{2n}$ be a bounded open set, $H \in L^{\infty}(\Om)$, $F\in L^{1}(\Om,\rr^{2n})$ and $u \in W^{1,1}(\Om)$. We consider the functional
\begin{equation}
\label{int:I}
     \mathcal{I}(u)=\int_{\Omega} \|\nabla u +F\|_{K_0,*} \, dx dy + \int_{\Omega} H u \, dx dy,
\end{equation}
where $\| \cdot \|_{K_0,*}$ denotes the dual norm of $\|\cdot\|_{K_0}$. In particular, when $F(x,y)=(-y,x)$ the first term in \eqref{int:I} coincides with the sub-Finsler area of the $t$-graph of $u$, see \cite{PozueloRitore2021,2020arXiv200711384F}. Moreover, if $K_0$ is the Euclidean unit ball centered at the origin and $H=0$ then \eqref{int:I} boils down to the classical area functional for $t$-graphs in Heisenberg group, see \cite{MR2165405,MR2609016} and references therein.
We say that the graph of $u$ has prescribed $K_0$-mean curvature $H$ in $\Om$ if $u$ is a minimizer of $\mathcal{I}$. Indeed, the Euler-Lagrange equation associated to $\mathcal{I}$ out of the singular set $\Om_0$, i.e. the set of points where $\nabla u + F$ vanishes, is given by 
\begin{equation}
\label{int:pmce}
\divv( \pi_{K_0} (\nabla u + F))=H,
\end{equation}
where $\pi_{K_0}$ is a suitable $0$-homogeneous function defined in \eqref{sup2}. 
When we fix a boundary datum $\varphi\in W^{1,1}(\Om)$, a solution to the \textit{Dirichlet problem} for the prescribed $K_0$-mean curvature equation is a minimizer $u$ of $\mathcal{I}$ such that $u - \varphi$ belongs to the Sobolev space $W^{1,1}_0(\Omega)$.
Our main result is Theorem \ref{th:main}, where we prove, under suitable regularity assumptions on the data, that there exists a Lipschitz solution to the Dirichlet problem for the prescribed $K_0$-mean curvature equation when $H$ is \textit{constant}, it satisfies 
\begin{equation}
\label{eq:condHcost}
   |H|<H_{K_0, \partial \Om}(z_0)
\end{equation}for each $z_0=(x_0,y_0) \in \partial \Om$ and 
\begin{equation}
\label{eq:hyHnc}
    \left|\int_\Om H v\,dxdy\right| \leq (1-\delta)\int_{\Om}\|\nabla v\|_{K_0,*}\,dxdy
\end{equation}
for each non-negative function $v \in C_c^{\infty}(\Om)$ and a suitable $\delta=\delta(K_0,\Om,H)\in(0,1]$.
Here $H_{K_0, \partial \Om}$ denotes the Finsler mean curvature of the boundary $\partial \Om \subseteq \rr^{2n}$. Notice that the mean curvature of the graph of $u$ is computed with respect to the downward pointing unit normal and the Finsler mean curvature of $\partial \Om$ is computed with respect to the inner unit normal. The upper bound \eqref{eq:condHcost} of $H$ in terms of the Finsler mean curvature of the boundary is the Finsler analogous of the standard assumption for the solution to the Dirichlet problem for the classical mean curvature equation in the Euclidean setting as stated in \cite{MR282058}, \cite{MR336532} or \cite{MR0377669} (see also \cite[Theorem 16.11]{GT}).
On the other hand, \eqref{eq:hyHnc} is a standard sufficient condition for the estimates of the supremum of $|u|$ (see \cite{MR0377669} or \cite{GT}). It is worth noting that, in the Euclidean setting (cf. e.g. \cite{MR487722}), the weaker condition 
\begin{equation}
\label{eq:hyHncweak}
    \left|\int_\Om H v\,dxdy\right| \leq\int_{\Om}\|\nabla v\|_{K_0,*}\,dxdy
\end{equation}
for each $v \in C_c^{\infty}(\Om)$ is actually a necessary condition for the existence of a solution to the Euclidean prescribed mean curvature equation. Moreover, the Euclidean version of \eqref{eq:hyHncweak} suffices to guarantee existence of solutions to the Euclidean prescribed mean curvature equation as long as no boundary conditions are imposed (cf. \cite{MR487722,MR3767675}). 

Remarkably, as we will show in Section \ref{sc:aprioriestimate} and \ref{sec:last}, there are particular settings in which Theorem \ref{th:main} continues to hold even without imposing \eqref{eq:hyHnc}, such as the \emph{first sub-Finsler} Heisenberg group $\mathbb H^1$ (cf. Theorem \ref{th:main23}) and \emph{any sub-Riemannian} Heisenberg group $\hh^n$ (cf. Theorem \ref{th:main2}).
The Dirichlet problem for constant mean curvature in the first Riemannian Heisenberg group has been studied in \cite{MR2332426} under the same condition on the mean curvature.  It is worth mentioning that this is the first time that the existence of Lipschitz solutions to the sub-Finsler Dirichlet problem has been studied when $H\ne0$, even in the particular case in which $K_0$ is the unit disk centered at $0$, where the sub-Finsler and the sub-Riemannian frameworks coincide. Indeed, as far as we know, the sub-Riemannian Dirichlet problem has been studied in \cite{MR2043961,MR2262784,MR2165405, MR3216825, DLPT, PSCTV} only in the case of minimal surfaces under the bounded slope condition or the $p$-convexity assumption on $\Om$, and in \cite{MR4497298} when $H\neq 0$ is small enough and in a weaker functional framework. In particular, we point out that when $n=1$ our assumption \eqref{eq:condHcost} implies that $\Omega \subseteq \rr^2$ is strictly convex, see Remark \ref{rk:convex}. It is easy to check that our sub-Finsler functional $\mathcal{I}$ for $H=0$ satisfies the hypothesis of the area functional considered in \cite{DLPT}. Thus, assuming the bounded slope condition we directly obtain the existence of Euclidean Lipschitz minimizers for Plateau's problem. The approach of the present paper, based on the Schauder fixed-point theory, follows the scheme developed in \cite{MR2262784} and extends its results both to the case of prescribed constant mean curvature  $H \ne0$ and to the sub-Finsler setting. 
In Theorem \ref{th:main} we cannot expect better regularity than Lipschitz. Indeed, even in the sub-Riemannian Heisenberg group $\hh^1$ there are
several examples of non-smooth area minimizers. For instance, S.D. Pauls \cite{MR2225631} exhibited a solution of low
regularity for Plateau's problem with smooth boundary datum,
while in \cite{MR2262784,MR2448649,SGR}
the authors provided solutions to the Bernstein problem in $\hh^1$ that are only Euclidean Lipschitz. These examples have been recently generalized to the sub-Finsler setting in \cite{GPR22}. We refer the interested reader to \cite{GR24} for a positive result to the sub-Finsler Bernstein problem for $(X,Y)$-Lipschitz surfaces, which can be seen as a regularity result for global perimeter minimizers.

Since equation \eqref{int:pmce} is sub-elliptic degenerate and it is singular next to the singular set, inspired by \cite{MR2262784, MR2043961}, we first introduce a family of desingularized approximating equations given by 
\begin{equation}\label{asx}
	\divv\left(\pi_{K_0}(\nabla u+F)\frac{\|\nabla u+F\|^2_{\ast}}{\left(\varepsilon^3+\|\nabla u+F\|_{\ast}^3\right)^\frac{2}{3}}\right)=H
\end{equation}
for each $0<\eps<1$. A  similar approximation scheme was considered in the sub-Riemannian setting in \cite{MR2774306,MR2583494} to study the Lipschitz regularity for non-characteristic minimal surfaces. For a detailed analysis of this approach, we refer to \cite{MR3510691}. This family of equations can be obtained by considering a $(2n+1)$-dimensional convex body $K_{\eps}$ containing the origin in its interior, that converges in the Hausdorff sense to the $2n$-dimensional convex body  $K_0$ as $\eps\to 0$. The choice of the convex body $K_{\eps}$ is not arbitrary. Indeed, we need a specific shape in order to obtain an approximating equation well-defined in the classical sense in the singular set. It is interesting to point out that the  Riemannian approximation of \cite{MR2262784, MR2043961,MR2774306,MR2583494} produces an approximation of the unit disk $D\subseteq \rr^{2n}$ by ellipsoids in the sub-Riemannian setting, and this approximation does not work in the greater sub-Finsler generality. 
Indeed, if instead of \eqref{asx} we were to consider the more natural equation
\begin{equation}\label{riemapprox62}
    \divv\left(\pi_{K_0}(\nabla u+F)\frac{\|\nabla u+F\|_*}{\sqrt{\eps^2+\|\nabla u+F\|^2_*}}\right)=H,
\end{equation}
reminiscent of the Riemannian approximation scheme of \cite{MR2262784} (cf. Remark \ref{reminfondo}), we would have to require certain assumptions on $K_0$ for \eqref{riemapprox62} to be well-defined in the classical sense. We refer to Section \ref{sec:last} for a more careful analysis in this regard. On the other hand, while \eqref{asx} is always well-defined, it still tends to degenerate close to the singular set, so that it could fail to be elliptic. Therefore, we need to regularize \eqref{asx} by perturbing it with an Euclidean curvature term. More precisely, we consider the family of equations given by
\begin{equation}\label{findiv24intro}
	\divv\left(\pi(\nabla u+F)\frac{\|\nabla u+F\|^2_{\ast}}{\left(\varepsilon^3+\|\nabla u+F\|_{\ast}^3\right)^\frac{2}{3}}\right)+\eta\divv\left(\frac{\nabla u+F}{\sqrt{1+|\nabla u+F|^2}}\right)=H.
\end{equation}
for any $\eps\in (0,1)$ and any $\eta>0$ sufficiently small, whose associated Finsler variational functional is given by 
\[
\mathcal{I}_{\eps ,\eta}(u)=\int_{\Omega} \left(\eps^3+ \|\nabla u +F\|_{K_0,*}^3\right)^{\frac{1}{3}} \, dx dy + \eta\int_\Om\sqrt{1+|\nabla u+F|^2}\, dx dy + \int_{\Omega} H u \, dx dy.
\]
A direct computation (cf. Section \ref{sc:aprioriestimate}) will show that \eqref{findiv24intro} is in fact a classical, quasi-linear second-order elliptic equation. 
Therefore, given a boundary datum $\varphi \in C^{2,\alpha}(\bar{\Om})$, the solvability of the Dirichlet problem associated to \eqref{asx} is reduced by \cite[Theorem 13.8]{GT} to \emph{a priori} estimates in $C^1(\overline{\Om})$ of a related family of problems. As usual the \emph{a priori} estimates in $C^1(\overline{\Om})$ consist of three parts: estimates of the supremum of $|u|$, boundary estimates of the gradient of $u$ and interior estimates of the gradient of $u$. While the estimates of the supremum rely on assumption \eqref{eq:hyHnc}, the boundary estimates of the gradient are obtained by a barrier argument that depends on the Finsler distance from the boundary $\partial \Om$. 
Due to technical reasons in the construction of the barriers we need to assume the strict inequality in  \eqref{eq:condHcost}, avoiding the optimal case when $H$ coincides with $H_{K_0, \partial \Om}(z_0)$ at a given point $z_0 \in \partial\Om$.
We emphasize that these results hold even if the prescribed curvature $H$ is non-constant and Lipschitz. The only crucial step where we need $H$ to be constant is the maximum principle for the gradient of the solution that allows us to reduce the interior estimates of the gradient to its boundary estimates. Finally, once we realize that $C^1$ estimates are independent of the approximation parameters $\eps$ and $\eta$, passing to the limit as $\eps,\eta\to 0$ and using Arzelà-Ascoli Theorem we get the existence of a Lispchitz minimizer for the sub-Finsler Dirichlet problem. 

In the last decades, variational problems related to the sub-Riemannian area introduced by Capogna, Danielli and Garofalo \cite{MR1312686}, Garofalo and Nhieu \cite{MR1404326} and Franchi, Serapioni and Serra Cassano \cite{MR1871966} have received great interest, see also \cite{MR1404326, MR2354992, MR2262784,MR2165405, MR2312336, MR2472175, MR2333095,  MR2223801, MR2401420,MR3048517,MR2609016,MR2435652, MR2358000, MR3412408, MR2583494, MR2235475,MR2177813,MR4316814,MR3794892,MR2165405}. The monograph \cite{MR2312336} provides a quite complete survey of progresses on the subject.

In particular, the analysis of the Dirichlet problem with $H\ne 0$ constant for $t$-graphs is essential since it is strictly related to the isoperimetric problem in $\hh^n$. In \cite{MR676380}, P. Pansu conjectured that the boundaries of isoperimetric sets in $\hh^1$ are given by the surfaces now called Pansu's spheres, obtained as the union of all sub-Riemannian geodesics of a fixed curvature joining two points in the same vertical line. This conjecture has been solved only assuming \emph{a priori} some regularity of the minimizers of the area with constant prescribed mean curvature, such as the $C^2$ regularity of the minimizers \cite{MR2435652}, the axial symmetry of the minimizers \cite{MR2402213}, the Euclidean convexity of minimizers \cite{MR2548252} and when the isoperimetric set both contains a horizontal disk $D_r$ and is contained in a vertical cylinder $C_r$ for some $r>0$ (cf. \cite{MR2898770} for a more accurate statement). Recently, in \cite{PozueloRitore2021,2020arXiv200711384F}, the notion of Pansu's spheres has been generalized to the Pansu-Wulff spheres in the sub-Finsler setting. We refer to \cite{AS17} for earlier research in this direction. Consequently, the results presented in \cite{MR2435652} and \cite{MR2898770} have been generalized in \cite{2020arXiv200711384F} and \cite{PozueloRitore2021} respectively. Finally, in \cite{MR4314055} the $C^2$ regularity of the characteristics curves for the prescribed $K_0$-mean curvature equation with continuous datum $H$ is established when the boundary of the set is  Euclidean Lispchitz and $\hh$-regular. Hence our existence result in the class of Lispchitz $t$-graphs provides an important contribution to the understanding of the sub-Finsler isoperimetric problem. 
Recently, similar results concerning CMC graphs and surfaces in the Euclidean setting with an anisotropic norm have been obtained by \cite{DRT22,DPDR22}. \\

The manuscript is organized as follows. In Section \ref{sc:preliminaries} we introduce some preliminary definitions and results, such as the Minkowski norm, the Finsler geometry of a hypersurface in $\rr^{2n}$, the Heisenberg group, the sub-Finsler perimeter and the sub-Finsler functional $\mathcal{I}$. Section \ref{sc:Fap} is dedicated to the Finsler approximation by the $K_{\eps}$ convex body of the sub-Finsler convex body $K_0$. Section \ref{sc:aprioriestimate} deals with the \emph{a priori} estimates for the $C^1$ norm of the solution to the approximating elliptic equations \eqref{findiv24intro}. In particular, Proposition \ref{th:Cinfestimate} deals with the \emph{a priori} estimates of $|u|$ when $H$ is Lispchitz and verifies the integral condition \eqref{eq:hyHnc}, in Proposition \ref{prop:boundgradest} we deduce the boundary estimates of the gradient when $H$ is Lispchitz, in Proposition \ref{prop:maxprinc} we establish the maximum principle for the gradient for $H$ constant, and finally, in Proposition \ref{th:Cinfestimateconst} we achieve \emph{a priori} estimates of $|u|$ when $H$ is constant and $n=1$. Section \ref{sc:mainth} contains the main Theorems \ref{th:main} and \ref{th:main23}. Finally, in Section \ref{sec:last} we prove Theorem \ref{th:main2} in the sub-Riemannian setting by means of the approximation scheme associated to \eqref{riemapprox62}.

\subsubsection*{Acknowledgement}
The authors warmly thank Manuel Ritoré for his advice and for stimulating discussions and YanYan Li for fruitful conversations about the content of the present paper. The authors would also like to thank the anonymous referees for giving such constructive comments which substantially helped improve the quality of the paper.

 \section{Preliminaries}
 \label{sc:preliminaries}
\subsection{Notation}\label{notsec} Unless otherwise specified, we let $n,d\in\mathbb{N}$, $n,d\geq 1$. Given two open sets $A,B\subseteq \rr^d$, we write $A\Subset B$ whenever $\overline A\subseteq B$. We say that a set $K$ is a convex body if it is convex, compact and has non-empty interior. We say that a convex body $K$ is (in) $C^{k,\alpha}_+$, for $k\in \nn$ and $\alpha\in [0,1]$, if $\partial K$ is of class $C^{k,\alpha}$ with  strictly positive principal curvatures.
 \subsection{Minkowski norms}
 We follow the approach developed in \cite{PozueloRitore2021, MR3155183}.
We say that $\|\cdot\|:\rr^d\to [0,+\infty)$ is a \emph{norm} if it verifies:
\begin{enumerate}
	\item $\|v\|=0\Leftrightarrow v=0$,
	\item $\|sv\|=s\|v\|$ for any $s>0$,
	\item $\|v+u\|\leq\|v\|+\|u\|$
\end{enumerate}
for any $u,v\in\rr^d$. We stress the fact that we are not assuming the symmetry property $\norm{-v}=\norm{v}$. It is well known that any norm is equivalent to the Euclidean  norm $|\cdot|$, that is, given a norm $\|\cdot\|$ in $\rr^d$ there exist constants $0<c<C$ such that
\begin{equation}\label{in:norms}
    c |\cdot |\leq\|\cdot\|\leq C |\cdot|.
\end{equation}
Associated to a given a norm $\|\cdot\|$ we have the set $F=\{u\in \rr^d:||u||\le 1\}$, which,  thanks to \eqref{in:norms} and the properties of $\|\cdot\|$, is compact, convex and includes $0$ in its interior. Reciprocally, given a convex body $K$ with $0\in\intt(K)$, the function 
$$||u||_K=\inf\{\la\ge 0:  u\in\la K\}$$
 defines a norm so that $K=\{u\in\rr^d:||u||_K\le 1\}$. In the following we let 
 $$B_{K}(v,r):=\{w\in \rr^d\,:\,\|w-v\|_K\leq r\}$$
for any $v\in \rr^d$ and $r>0$.
It is easy to check that 
$\|v\|_K=\|-v\|_{-K}$ for any $v\in\rr^d$, so that 
\begin{equation}\label{pallastorta}
    B_{-K}(v,r):=\{w\in \rr^d\,:\,\|v-w\|_K\leq r\}
\end{equation}
for any $v\in\rr^d$ and $r>0$.
Given a norm $\norm{\cdot}$ and a scalar product $\escpr{\cdot,\cdot}$ in $\rr^d$, we consider the dual norm $\norm{\cdot}_*$ of $\|\cdot\|$ with respect to $\escpr{\cdot,\cdot}$, defined by
\begin{align}\label{sup}
\norm{u}_*=\sup_{\norm{v}\le 1}\escpr{u,v}.
\end{align}
The dual norm is the support function of the unit ball $F$ with respect to the scalar product $\langle \cdot,\cdot\rangle$. Moreover, thanks to the above definitions the following Cauchy-Schwarz formula holds:
\begin{equation}\label{in:C-S}
    \escpr{u,v}\leq \|u\|_*\|v\|
\end{equation}
for any $u,v\in\rr^d$.
If in addition we assume $K$ to be strictly convex and $u\neq 0$, then the compactness and strict convexity of $K$ guarantee the existence of a unique vector $\pi_K(u)$ in $\partial K$ where the supremum in \eqref{sup} is attained, i.e.
\begin{equation}\label{sup2}
\|u\|_{K,*}=\langle u,\pi_K(u)\rangle.
\end{equation}
It is easy to see that $\pi_K$ is a positively $0$-homogeneous map, i.e. $\pi_K(\lambda u)=\pi_K(u)$ for any $\lambda>0$ and $u\in \mathbb{R}^d\setminus\{0\}$, and that $\|\pi_K(u)\|_K=1$ for any $u\in\rr^d\setminus\{0\}$. Moreover, if we assume that $K$ is $C^2_+$,  then $\pi_K|_{\mathbb{S}^{d-1}}:\mathbb{S}^{d-1}\scu\partial K$ is a $C^1$ diffeomorphism whose inverse is the Gauss map $\mathcal{N}_K$ of $\partial K$ with respect to the outer unit normal. In particular,
\begin{equation}\label{positivedefinite}
    D\pi_K(u)\text{ is positive definite}
\end{equation}
for any $u\in\rr^{d}\setminus\{0\}$.
Furthermore, we have that the norms $\| \cdot \|_K$ and $\| \cdot \|_{K,*}$ belong to $C^{k,\alpha}(\rr^{d}\setminus \{0\})$ if and only if $\partial K$ is $C^{k,\alpha}$ for $k \in \nn$ and $0 \le \alpha \le1$. For further details see \cite[Section 2.5]{MR3155183}. The relation between the dual norm and the map $\pi_{K}$ is given by
\begin{equation}\label{eq:normpi}
\nabla \|u\|_{K,\ast}=\pi_{K}(u).
\end{equation}
Indeed, for any $u\in\rr^d\setminus\{0\}$
\begin{equation*}
    \nabla \|u\|_{K,\ast}=\nabla\langle u,\pi_{K}(u)\rangle=\pi_{K}(u)+u\cdot D\pi_K(u)=\pi_K(u),
\end{equation*}
where the last equality follows from the fact that $0$-homogeneous functions are radial.

\subsection{Finsler geometry of hypersurfaces in the Euclidean space}\label{subjulian}
Let $K\subseteq\rr^d$ be a convex body in $C^2_+$, $0\in \intt K$ and  $\Om\subseteq \rr^d$ be a bounded domain with boundary $\partial\Om=\Sigma$ of class $C^2$. Let $N$ be the inner unit normal to $\Sigma$. Then the derivative map $(W_{K,\Sigma})_p=-d_p(\pi_{K}\circ N): T_p\Sigma \to T_{\pi_K(N(p))}\partial K$, being $\pi_K$ as in \eqref{sup2}, is called the \emph{$K$-Weingarten map}. Let $\gamma\subseteq \partial K$ be a differentiable curve with $\gamma(0)=\pi_{K}(N(p))$ and $\gamma'(0)\in  T_{\pi_{K}(N(p))}\partial K$. By definition of $\pi_{K}$, the function
\[
f(t)=\escpr{\gamma(t),N(p)}
\]
has a maximum at $0$ and therefore $\langle \ga'(0),N(p)\rangle =f'(0)=0$, which gives $T_{\pi_{K}(N(p))}\partial K=T_{N(p)} \mathbb{S}^{d-1}$. Moreover it is well known that $(dN)_q$
is an endomorphism of $T_q\Sigma$ and therefore $(W_{K,\Sigma})_p$ is an endomorphism of $T_p\Sigma$. We define the \emph{$K$-mean curvature} of $\Sigma$ as
\[
H_{K,\Sigma}=\text{Trace}(W_{K,\Sigma})=-\divv _\Sigma(\pi_{K}\circ N),
\]
where $\divv_\Sigma$ is the divergence in the tangent directions to $\Sigma$. 
We remark that $W_{K,\Sigma}$ is neither necessarily self-adjoint nor symmetric. Let us check that $W_{K,\Sigma}$ is anyway diagonalizable. Indeed, given a parametrization $X$ of $\Sigma$, $dN$ has a symmetric matrix representation $S$ in the basis $B=\{\partial_{x_1}X,\ldots,\partial_{x_{d-1}}X\}$. On the other hand, $\pi_K=\mathcal{N}_K^{-1}$  and, since $K$ is in $C^2_+$,  the matrix $A$ which represents $d(\mathcal{N}_K^{-1})$ with respect to $B$ is positive definite. Therefore, there exists an invertible matrix $P$ such that $A=P^tP$. Notice that the matrices $P^tP S$ and $PSP^{t}$ have the same spectrum, and equal to the spectrum of $W_{K,\Sigma}$. Since $S$ is symmetric we can apply Sylvester's criterion
to obtain that all the eigenvalues of $PSP^t$ are real. The eigenvalues of $W_{K,\Sigma}$ are called \emph{$K$-principal curvatures} and the eigenvectors of $W_{K,\Sigma}$ are called \emph{$K$-principal directions}.

 \subsubsection{Finsler distance from the boundary and the Eikonal equation}\label{Julian}
 In this and the following section we want to rely on some results by \cite{MR2305073,MR2094267}, and so we assume that $K$ is in $ C^{\infty}_+$, i.e. $\partial K$ is of class $C^{\infty}$ with strictly positive principal curvatures.  Let $\Omega\subseteq\rr^d$ be a bounded domain with boundary $\partial\Omega=\Sigma$ of class $C^{2,\alpha}$, for $0<\alpha\leq 1$, and inner unit normal $N$. We shall adapt Theorem 4.26  in \cite{MR2522595} and the remarks at the end of Section 4.5 in \cite{MR2522595} to prove existence of a tubular neighborhood of $\Sigma$ and compute the $K$-mean curvature of parallel hypersurfaces. The \emph{interior signed $K$-distance}  to $\Sigma$ is the function $d_{K,\Sigma}:\rr^d\to\rr$ given by
 \begin{equation*}
 d_{K,\Sigma}(p)=\begin{cases}\min \{ \|p-q\|_K : q\in\Sigma \}&\text{ if  }p\in\Om\\
		-\min \{ \|p-q\|_K : q\in\Sigma \} &\text{ if } p\notin\Om.
	\end{cases}
 \end{equation*}
Consider the map $F:\Sigma\times\rr\to \rr^d$ given by
\[
F(q,t)=q+t(\pi_K\circ N)(q).
\]
For any $v\in T_q\Sigma$, we have $(dF)_{(q,t)}(v,0)=v+td(\pi_K\circ N)(v)$ and $(dF)_{(q,t)}(0,1)=(\pi_K\circ N)(q)$. Since $K$ contains the origin, 
\[
\escpr{\pi_K(N),N}>0
\]
and $dF$ is invertible at $t=0$. Thus $F$ is locally a diffeomorphism and, being $\Sigma$ a compact hypersurface, $F$ is a diffeomorphism in a domain $\Sigma\times(-\delta,\delta)$. The set $F(\Sigma\times(-\delta,\delta))$ is called a \emph{tubular neighborhood} of $\Sigma$. Notice that if $p=F(q,t)$, then
\begin{equation}\label{eq:precious}
p-q=t(\pi_K\circ N)(q)
\end{equation}
and, taking the $K$-norm, we obtain that $d_{K,\Sigma}(p)=t$. We know (cf. \cite{MR2305073}) that, under our assumptions, there exists $\bar\delta>0$ such that
\begin{equation*}
    d_{K,\Sigma}\in C^{2,\alpha}(\overline{F(\Sigma\times(-\delta,\delta))}).
\end{equation*}
for any $\delta<\bar\delta$.
Given $|t|<\delta$, we let
\begin{equation}\label{def:par}
\Sigma_t=\{p\in\rr^d : p=F(q,t) \text{ for some }q\in\partial \Sigma
\}.
\end{equation}
\begin{proposition}
Let $\Om\subseteq\rr^d$ be a bounded domain with boundary $\partial\Om=\Sigma$ of class $C^2$  and let $F(\Sigma\times(-\delta,\delta))$ be a tubular neighborhood of $\Sigma$. The $K$-mean curvature of $\Sigma_t$ at $p\in\Sigma_t$ is given by
\begin{equation}\label{eq:meanpar}
H_{K,\Sigma_t}(p)=\sum_{i=1}^{d-1}\frac{\kappa_i(q)}{1-t\kappa_i(q)},
\end{equation}
where $q\in\Sigma$ satisfies $p=F(q,t)$ and $\kappa_1(q),\ldots,\kappa_{d-1}(q)$ are the $K$-principal curvatures of $\Sigma$ at $q$.
\end{proposition}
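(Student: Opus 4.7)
My plan is to exploit the fact that $\Sigma_t$ is the image of $\Sigma$ under $F(\cdot,t)$ and to compute the $K$-Weingarten map of $\Sigma_t$ in terms of that of $\Sigma$, after which the formula will follow by taking traces.

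First, I would show that the (Euclidean) inner unit normal $N_t$ to $\Sigma_t$ at $p=F(q,t)$ coincides with $N(q)$. To this end, I differentiate $F(\cdot,t)$ in the first variable along $v\in T_q\Sigma$ to obtain
\[
dF_{(q,t)}(v,0)=v+t\,d(\pi_K\circ N)_q(v)=(I-tW_{K,\Sigma})(v),
\]
so the tangent space to $\Sigma_t$ at $p$ is the image of $T_q\Sigma$ under $I-tW_{K,\Sigma}$. Since $v\in T_q\Sigma$ is orthogonal to $N(q)$ and, by the identification $T_{\pi_K(N(q))}\partial K=T_{N(q)}\mathbb{S}^{d-1}$ established in Section \ref{subjulian}, the vector $d(\pi_K\circ N)_q(v)$ also lies in $N(q)^\perp$, every tangent vector of $\Sigma_t$ at $p$ is Euclidean-orthogonal to $N(q)$. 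Hence $T_p\Sigma_t=N(q)^\perp=T_q\Sigma$ and, by continuity in $t$ and orientation, $N_t(p)=N(q)$.

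The second step is to identify $W_{K,\Sigma_t}$ at $p$. From $N_t(p)=N(q)$ we get $\pi_K\circ N_t(p)=\pi_K\circ N(q)$. Taking a curve $s\mapsto q(s)$ in $\Sigma$ with $q(0)=q$, $q'(0)=v$, the curve $s\mapsto F(q(s),t)$ lies in $\Sigma_t$ and its tangent at $s=0$ is $(I-tW_{K,\Sigma})(v)$, while $\pi_K\circ N_t(F(q(s),t))=\pi_K\circ N(q(s))$. Differentiating at $s=0$ yields
\[
W_{K,\Sigma_t}(p)\circ(I-tW_{K,\Sigma})(v)=W_{K,\Sigma}(v),
\]
for every $v\in T_q\Sigma$. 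Under the identification $T_p\Sigma_t=T_q\Sigma$, this gives the operator identity
\[
W_{K,\Sigma_t}(p)=W_{K,\Sigma}\,(I-tW_{K,\Sigma})^{-1}
\]
on $T_q\Sigma$, the invertibility of $I-tW_{K,\Sigma}$ being precisely the condition ensuring that $F(\cdot,t)$ is a diffeomorphism, and thus automatic in the tubular neighborhood.

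Finally, since the two factors commute (the second is a rational function of the first), they are simultaneously upper-triangularizable. Using the fact, recalled in Section \ref{subjulian}, that $W_{K,\Sigma}$ has only real eigenvalues $\kappa_1(q),\dots,\kappa_{d-1}(q)$, the eigenvalues of $W_{K,\Sigma_t}(p)$ counted with algebraic multiplicity are exactly $\kappa_i(q)/(1-t\kappa_i(q))$. Taking traces yields
\[
H_{K,\Sigma_t}(p)=\mathrm{Trace}\bigl(W_{K,\Sigma_t}(p)\bigr)=\sum_{i=1}^{d-1}\frac{\kappa_i(q)}{1-t\kappa_i(q)},
\]
as desired. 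The main technical point I expect to dwell on is the identification $N_t(p)=N(q)$, since the argument crucially uses that $\pi_K\circ N$ takes values in $\partial K$ whose tangent plane at $\pi_K(N(q))$ equals $N(q)^\perp$; the rest is an essentially algebraic manipulation, made slightly delicate by the non self-adjointness of the Finsler Weingarten operator, which however affects neither commutativity nor the trace.
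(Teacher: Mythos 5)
Your proof is correct, and it takes a somewhat different route from the paper's. The paper works directly with a basis $\{e_1,\dots,e_{d-1}\}$ of $K$-principal directions of $\Sigma$ (i.e., eigenvectors of $W_{K,\Sigma}$), observes that $(dF)_{(q,t)}(e_i,0)=(1-t\kappa_i)e_i$, deduces that the rescaled vectors $e_i/(1-t\kappa_i)$ are $K$-principal directions of $\Sigma_t$ with $K$-principal curvatures $\kappa_i/(1-t\kappa_i)$, and sums. You instead derive the operator identity $W_{K,\Sigma_t}(p)=W_{K,\Sigma}\,(I-tW_{K,\Sigma})^{-1}$ on $T_q\Sigma$ and then invoke the spectral mapping theorem, using only that the spectrum of $W_{K,\Sigma}$ is real. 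Two remarks on the comparison. First, your preliminary step $N_t(p)=N(q)$ is exactly the identification the paper's proof uses implicitly when it differentiates $\pi_K\circ N$ rather than $\pi_K\circ N_t$ at $p$, so making it explicit is a genuine improvement in clarity. Second, the paper's eigenbasis argument tacitly relies on $W_{K,\Sigma}$ being diagonalizable; this is in fact true (the preceding discussion shows $W_{K,\Sigma}$ is similar to the symmetric matrix $PSP^t$, hence diagonalizable), but your argument via simultaneous triangularization of commuting operators with real spectrum sidesteps the diagonalizability question altogether and so is marginally more robust. Both approaches are valid and yield the same conclusion; yours is a little longer but makes every step visible.
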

\begin{proof}
Let $\{e_1,\ldots,e_{d-1}\}$ be a basis of $K$-principal directions of $\Sigma$. Then $(dF)_{(q,t)}(e_i,0)=(1-t\kappa_i)e_i$. Therefore a basis of principal directions in $\Sigma_t$ is $\{\frac{e_1}{1-t\kappa_1},\ldots,\frac{e_{d-1}}{1-t\kappa_{d-1}} \}$. Since we have  $$-d(\pi_K\circ N)_q \left(\frac{e_i}{1-t\kappa_i}\right)=\frac{\kappa_i}{1-t\kappa_i} e_i$$
for each $i=1,\ldots,d-1$ we get the conclusion.
\end{proof}
\begin{remark}
From \eqref{eq:meanpar}, we obtain that the $K$-mean curvature is increasing in $t$. In particular, given $q\in \Sigma$ and $p=F(q,t)$ for $t>0$, it holds that
\begin{equation}\label{in:compar}
H_{K,\Sigma_t}(p)\geq H_{K,\Sigma}(q).
\end{equation}
\end{remark}

 The following Eikonal equation can be deduced using classical arguments. We include the proof for the sake of completeness.
 \begin{proposition}\label{prop:findist}
 It holds that
 \begin{equation}\label{eq:Ei}
 \|\nabla d_{K,\Sigma}(p)\|_{K,*}=1
 \end{equation}
 for any $p$ where $\df$ is differentiable.
 \end{proposition}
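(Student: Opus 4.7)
The plan is to exploit the diffeomorphism $F(q,t)=q+t\pi_K(N(q))$ just constructed to compute $\nabla d_{K,\Sigma}(p)$ pointwise in the tubular neighborhood, where differentiability is already available from the cited $C^{2,\alpha}$ regularity. Fix $p$ in this neighborhood and write $p=F(q,t)$ uniquely with $q\in\Sigma$ and $|t|<\delta$; recall $d_{K,\Sigma}(p)=t$. Since $F$ provides both a privileged normal direction and a full tangential space, two derivative computations will pin down $\nabla d_{K,\Sigma}(p)$ completely.

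For the normal direction, I would differentiate the identity $d_{K,\Sigma}(F(q,t+s))=t+s$ at $s=0$ along the straight line $s\mapsto p+s\,\pi_K(N(q))$, obtaining
\[
\langle \nabla d_{K,\Sigma}(p),\pi_K(N(q))\rangle=1.
\]
For the tangential directions, since $F(\cdot,t)$ parametrizes the level set $\Sigma_t=\{d_{K,\Sigma}=t\}$ near $p$ (see \eqref{def:par}), the gradient $\nabla d_{K,\Sigma}(p)$ is Euclidean-orthogonal to every $(dF)_{(q,t)}(v,0)=v+t\,d(\pi_K\circ N)(v)$ with $v\in T_q\Sigma$.

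Next I would show that $\nabla d_{K,\Sigma}(p)$ is in fact Euclidean-parallel to $N(q)$. Any $v\in T_q\Sigma$ is perpendicular to $N(q)$, while $d(\pi_K\circ N)(v)$ lies in $T_{\pi_K(N(q))}\partial K$, which, by the observation recorded in Section \ref{subjulian} (via the maximizing property of $\pi_K$), coincides with $T_{N(q)}\mathbb{S}^{d-1}$ and is therefore again perpendicular to $N(q)$. Hence $T_p\Sigma_t\perp N(q)$, and the usual fact that the gradient of a function is Euclidean-normal to its level set gives $\nabla d_{K,\Sigma}(p)=\lambda\,N(q)$ for some $\lambda\in\rr$.

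Substituting this into the normal relation yields $\lambda\,\langle N(q),\pi_K(N(q))\rangle=1$, and \eqref{sup2} identifies $\langle N(q),\pi_K(N(q))\rangle$ with $\|N(q)\|_{K,*}$, so that $\lambda=1/\|N(q)\|_{K,*}>0$. Positive $1$-homogeneity of $\|\cdot\|_{K,*}$ then produces
\[
\|\nabla d_{K,\Sigma}(p)\|_{K,*}=\lambda\,\|N(q)\|_{K,*}=1,
\]
as desired. I do not anticipate a serious obstacle: the two mildly non-routine inputs -- that $T_{\pi_K(N(q))}\partial K$ sits inside $N(q)^\perp$, and that $F(\cdot,t)$ really parametrizes a level set of $d_{K,\Sigma}$ -- have both been recorded earlier in the excerpt, and everything else is just a one-line chain rule together with the definition of $\pi_K$.
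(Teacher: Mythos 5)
Your argument is clean and correct \emph{inside the tubular neighborhood}, and it is a genuinely different route from the paper's: rather than proving the two one-sided inequalities separately, you pin down the Euclidean gradient $\nabla d_{K,\Sigma}(p)$ explicitly as $N(q)/\|N(q)\|_{K,*}$, using the normal-derivative identity together with the fact that the level set through $p$ has tangent space $N(q)^{\perp}$. This yields more information than the bare Eikonal equation and is an attractive observation.

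There is, however, a genuine gap relative to the statement as written. The proposition asserts $\|\nabla d_{K,\Sigma}(p)\|_{K,*}=1$ at \emph{every} point $p$ where $\df$ is differentiable, and the paper uses it in that generality: in Section \ref{ridge} the Eikonal equation is invoked at arbitrary points of $\intt{\Om_1}$ to deduce $\nabla\df\neq 0$, and such points may lie well outside any tubular neighborhood of $\partial\Om$. Your argument hinges on (i) the representation $p=F(q,t)$ with $F$ a local diffeomorphism, and (ii) the identification of $T_p\{d_{K,\Sigma}=t\}$ with $(dF)_{(q,t)}(T_q\Sigma\times\{0\})=N(q)^{\perp}$; neither is available for a general point of differentiability.

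The paper avoids this restriction via a two-sided estimate that needs no parametrization. The bound $\|\nabla d_{K,\Sigma}(p)\|_{K,*}\leq 1$ comes from the Lipschitz-type inequality $d_{K,\Sigma}(p+tv)-d_{K,\Sigma}(p)\leq t\|v\|_K$ (valid at every $p$), dividing by $t$, letting $t\to 0^+$, and taking the supremum over $\|v\|_K\leq 1$; this uses only the triangle inequality for $\|\cdot\|_K$. The reverse inequality follows from differentiating $d_{K,\Sigma}(\gamma(t))=t$ along the minimizing ray $\gamma(t)=q_0+t\,\pi_K(N(q_0))$ through $p$ and applying the Cauchy--Schwarz inequality \eqref{in:C-S}. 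Your normal-derivative identity $\langle\nabla d_{K,\Sigma}(p),\pi_K(N(q))\rangle=1$ is essentially this lower-bound half; replacing the level-set step (ii) by the elementary upper-bound argument would make your proof fully general and, in fact, bring it into line with the paper's.
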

 \begin{proof}
 It is clear that, for any $p,p'$ in $\rr^d$, we have
 \[
 d_{K,\Sigma}(p')\leq \|p'-p\|_K+d_{K,\Sigma}(p).
 \]
 Taking $p'=p+tv$ where $t>0$, we get
 \[
 d_{K,\Sigma}(p+tv)-d_{K,\Sigma}(p)\leq \|tv\|_K.
 \]
 Therefore, 
 \begin{equation}\label{eq:pre-e}
 \escpr{v,\nabla  d_{K,\Sigma}(p)}\leq \|v\|_K.
 \end{equation}
Taking $v=\pi_K(\nabla  d_{K,\Sigma}(p))$ in \eqref{eq:pre-e}, we obtain
\[
\|\nabla  d_{K,\Sigma}(p)\|_{K,*}\leq 1.
\]
On the other hand, let $\gamma(t)=F(q_0,t)$. By \eqref{eq:precious} we have that 
\[
d_{K,\Sigma}(\gamma(t))=t.
\]
Taking derivatives in the previous equation, we obtain
\[
\escpr{\gamma'(t),\nabla d_{K,\Sigma}(\gamma(t))}=1.
\]
Since $\gamma'(t)=(\pi_K\circ N)(q_0)$, we get that $\|\gamma'(t)\|_K=1$. Using \eqref{in:C-S}, we get
\[
\| \nabla d_{K,\Sigma}(\gamma(t))\|_{K,*}\geq 1. \qedhere
\]
\end{proof}

 Given a tubular neighborhood $\mathcal{O}$ of $\partial\Om$ and $p=F(q,t)\in\Om$, we denote by $N_t(p)$ the inner unit normal to $\Sigma_t$ at $p$. 
Let us explicitly compute $\divv(\pi_K\circ N_t)(p)$. Let us recall that, to the $0$-homogeneity of $\pi_K$, we get that
\[
q\cdot D\pi_K(q)=0
\]
for any $q\in\rr^d$. In particular, taking $q=N_t$, we obtain
\[
N_t\cdot D(\pi_K \circ N_t)=N_t\cdot D\pi_K(N_t)\cdot D N_t=0,
\]
which implies that
\begin{equation}\label{tange}
-\divv(\pi_K\circ N_t)(p)=-\divv _\Sigma(\pi_K\circ N_t)(p)=H_{K,\Sigma_t}(p)\geq H_{K,\partial\Om}(q).
\end{equation}

With the next result, we better understand the relationship between the Finsler mean curvature of $\Sigma$, the Euclidean curvature of $\Sigma$ and the Euclidean principal curvatures of $K$.
\begin{proposition}
\label{prop:FMCEwrtE}
Let $K$ be a convex body in $C^2_+$, $0\in \intt K$. Let $\Omega\subseteq \rr^d$ be a bounded domain with $\partial \Om=\Sigma$ of class $C^2$ and let $N_q$ be the inner unit normal to $\Sigma$ at $q$. Then we have 
\begin{equation}
\label{eq:FMCEwrtE}
 H_{K,\Sigma}(q)=-\sum_{i=1}^{d-1} \dfrac{\escpr{D_{e_i} N_q, e_i}}{k_i^{K}(\pi_K(N_q))}
\end{equation}
where $k_i^{K}$ are the Euclidean principal curvatures of $\partial K$ and $e_1,\ldots,e_{d-1}$ is an orthonormal basis  of Euclidean principal directions of $\partial K$.   

\begin{proof}
We shall drop the subscript for $\pi_K$. Let $q$ in $\Sigma$ and  $e_1,\ldots,e_{d-1}$ be an orthonormal basis of $\rr^{d-1}=T_{\pi(N_q)} \partial K$ such that 
\[
(d \mathcal{N}_K)_{\pi (N_q)} e_i=k_i^{K}(\pi(N_q)) e_i.
\]
By hypothesis, $k_i^K>0$
for $i=1,\ldots,d-1$.  Here $\mathcal{N}_K$ denotes the Gauss map of $\partial K$. Then we have 
\begin{align*}
H_{K,\Sigma}(q)=-\divv_{\Sigma} (\pi(N_q))=-\sum_{i=1}^{d-1} \escpr{D_{e_i} \pi (N_q),e_i},
\end{align*}
where $D$ is the Levi-Civita connection in $\rr^d$. We claim that $D_{e_i} \pi (N_q)= d\pi(D_{e_i} N_q)$. Indeed, let $\gamma:(\epsilon,\epsilon) \to \Sigma$ such that $\gamma(0)=q$ and $\dot{\gamma}(0)=e_i$ for $i=1,\ldots,d-1$. Then we have 
\begin{align*}
    D_{e_i} \pi(N_q)&=\dfrac{D}{ds}\Big|_{s=0} \pi(N_{\gamma(s)})=\sum_{j=1}^{d} \dfrac{d}{ds}\Big|_{s=0} \pi_j(N_{\gamma(s)}) \frac{\partial }{\partial x_j}\\
    &= \sum_{j=1}^{d} \nabla \pi_j(N_{q}) \dfrac{D}{ds}\Big|_{s=0} N_{\gamma(s)}  \frac{\partial }{\partial x_j}= (d \pi)_{N_q} D_{e_i} N_q.
\end{align*}
Moreover, since $d\pi$ is a symmetric matrix we gain 
\begin{equation}
\label{eq:FME1step}
H_{K,\Sigma}(q)=-\sum_{i=1}^{d-1} \escpr{(d \pi)_{N_q} D_{e_i} N_q ,e_i}=-\sum_{i=1}^{d-1} \escpr{ D_{e_i} N_q , (d \pi)_{N_q} e_i}.
\end{equation}
Since $\pi=\mathcal{N}_{K}^{-1}$ we obtain $d \pi=(d\mathcal{N}_{K})^{-1}$ and 
\[
e_i=d\mathcal{N}_{K}^{-1} d \mathcal{N}_K(e_i) = d\mathcal{N}_{K}^{-1}(k_i^{K}(\pi(N_q)) e_i)=k_i^{K}(\pi(N_q)) d \pi(e_i),
\]
by linearity. Therefore, we have $d \pi(e_i)=(k_i^K(\pi(N_q)))^{-1} e_i $. Hence, plugging this last equality in \eqref{eq:FME1step} we gain \eqref{eq:FMCEwrtE}.
\end{proof}
\end{proposition}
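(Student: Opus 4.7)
The plan is to start from the definition $H_{K,\Sigma}(q) = -\divv_\Sigma(\pi_K \circ N)(q)$ and unfold the tangential divergence in a carefully chosen orthonormal basis of $T_q\Sigma$. The right frame is precisely the one supplied in the statement: the Euclidean principal directions $e_1,\ldots,e_{d-1}$ of $\partial K$ at the point $\pi_K(N_q)$. These vectors genuinely lie in $T_q\Sigma$ because $\pi_K(N_q)$ is by construction the point of $\partial K$ whose outer unit normal is $N_q$, so $T_{\pi_K(N_q)}\partial K = N_q^\perp = T_q\Sigma$ as linear subspaces of $\rr^d$. Clarifying this identification is the first step I would emphasise, since without it the hypothesised basis would not be usable in a tangential divergence on $\Sigma$.

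Once the frame is fixed, I would apply the chain rule to each term in the divergence and obtain
\[
H_{K,\Sigma}(q) = -\sum_{i=1}^{d-1}\escpr{(d\pi_K)_{N_q}(D_{e_i}N_q),\,e_i}.
\]
The next key ingredient is that $(d\pi_K)_{N_q}$ is a symmetric endomorphism of $\rr^d$. This follows directly from \eqref{eq:normpi}, which identifies $\pi_K$ with the gradient of the $C^2$ convex dual norm $\|\cdot\|_{K,*}$, so that $d\pi_K$ is literally its Hessian. Symmetry allows me to move $(d\pi_K)_{N_q}$ from the first slot of the inner product to the second.

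On the second slot the vectors $e_i$ are eigenvectors of $(d\pi_K)_{N_q}$: since $\pi_K=\mathcal{N}_K^{-1}$ as a diffeomorphism between $\sph^{d-1}$ and $\partial K$, and $d\mathcal{N}_K(e_i)=k_i^K(\pi_K(N_q))\,e_i$ by the choice of frame, the inverse function theorem yields $(d\pi_K)_{N_q}(e_i) = \bigl(k_i^K(\pi_K(N_q))\bigr)^{-1}\,e_i$. Plugging this back into the sum produces exactly \eqref{eq:FMCEwrtE}.

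The main technical obstacle is handling the two tangent-space identifications simultaneously: one has to treat a vector naturally arising as an element of $T_{\pi_K(N_q)}\partial K$ as a tangent vector to $\Sigma$ at $q$, while also keeping track of whether the differential $d\pi_K$ is interpreted as an endomorphism of $\rr^d$ or as its restriction to a tangent hyperplane. Both issues dissolve once we observe that all relevant tangent spaces coincide with $N_q^\perp$, and the $C^2_+$ regularity of $K$ together with \eqref{eq:normpi} is precisely what makes the symmetry of $d\pi_K$ and the eigenvector computation legitimate.
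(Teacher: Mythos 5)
Your proof is correct and follows essentially the same route as the paper: choose the Euclidean principal frame of $\partial K$ at $\pi_K(N_q)$, expand the tangential divergence, invoke the symmetry of $d\pi_K$ to move it to the second slot, and use $\pi_K=\mathcal{N}_K^{-1}$ to diagonalize $(d\pi_K)_{N_q}$ on $N_q^\perp$. The only difference is cosmetic: you supply explicit justifications for the tangent-space identification $T_{\pi_K(N_q)}\partial K=N_q^\perp=T_q\Sigma$ and for the symmetry of $d\pi_K$ (as the Hessian of $\|\cdot\|_{K,*}$ via \eqref{eq:normpi}), which the paper uses implicitly or asserts without comment.
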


\subsubsection{The Ridge of the Finsler distance}\label{ridge}

In the previous section we obtained some regularity and geometric properties of $\df$ in a tubular neighborhood of $\partial\Om$. We shall see that some of these properties persist outside a tubular neighborhood.
We fix a convex body $K\in C^{\infty}_+$ and a bounded domain $\Om\subseteq\rr^d$ with $C^{2,1}$ boundary.
For any $p\in\Om$, we let $D(p):=\{q\in \partial\Om\,:\,\df(p)=\|p-q\|_{K}\}$.
Since $\df$ is continuous, then clearly $D(p)\neq\emptyset$ for any $p\in\Om$. Accordingly, we define the set 
\begin{equation}\label{om1}
    \Om_1:=\{p\in\Om\,:\,D(p)\text{ is a singleton}\},
\end{equation}
and we define the \emph{Ridge} of $\Om$ by
\begin{equation*}
    R:=\Om\setminus\intt{\Om_1}.\end{equation*}
We know, again thanks to \cite{MR2305073}, that, under our assumptions on $K$ and $\Om$,
\begin{equation}\label{diff}
    \df\in C^{2,1}(\intt{\Om_1}\cup\partial\Om).
\end{equation}
Moreover, in \cite[Corollary 1.6]{MR2094267} it is proven that the Hausdorff dimension of $R$ is at most $d-1$. This fact implies that $R$ has empty interior, so that 
\begin{equation}\label{bordo}
    \partial(\intt{\Om_1})=\partial\Om\cup R.
\end{equation}
The following result is inspired partially by \cite[Lemma 3.4]{MR872254}.
\begin{proposition}\label{utile}
Let $p\in\Om$, let $q\in D(p)$ 
and let 
\begin{equation*}
    (p,q):=\{tp+(1-t)q\,:\,t\in(0,1)\}.
\end{equation*}
Then $(p,q)\subseteq\intt{\Om_1}$ and
\begin{equation}\label{nx}
   D(\ga)=\{q\}
\end{equation}
for any $\ga\in(p,q)$. 
\end{proposition}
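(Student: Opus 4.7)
The plan is to carry out a three-stage argument: (I) show $(p,q)\subseteq\Om$; (II) show $D(\ga)=\{q\}$ for every $\ga\in(p,q)$ using a triangle-inequality chain together with strict convexity of the $K$-norm; (III) upgrade this uniqueness to an open neighborhood of $\ga$, so that $\ga\in\intt\Om_1$, via the tubular map of Subsection \ref{Julian} at $q$. Stage (I) is immediate: if some $\ga^{\ast}\in(p,q)\cap\partial\Om$ existed, then $\|p-\ga^{\ast}\|_K<\|p-q\|_K=\df(p)$ would contradict $q\in D(p)$.

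For stage (II), fix $\ga=tp+(1-t)q$ with $t\in(0,1)$, so that $\|\ga-q\|_K=t\|p-q\|_K$. For every $q'\in\partial\Om$, the triangle inequality and the minimality $\|p-q'\|_K\geq\|p-q\|_K$ give
\[
\|p-q\|_K\leq\|p-q'\|_K\leq(1-t)\|p-q\|_K+\|\ga-q'\|_K,
\]
whence $\|\ga-q'\|_K\geq t\|p-q\|_K=\|\ga-q\|_K$. Therefore $q\in D(\ga)$ and $\df(\ga)=t\|p-q\|_K$. If in addition $q'\in D(\ga)$, then every inequality above is an equality; in particular
\[
\|p-q'\|_K=\|p-\ga\|_K+\|\ga-q'\|_K.
\]
As $K\in C^{\infty}_+$, the norm $\|\cdot\|_K$ is strictly convex, so the equality case of the triangle inequality forces $p-\ga$ and $\ga-q'$ to be positively proportional. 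Since $p-\ga=(1-t)(p-q)$, comparing $K$-norms yields $\ga-q'=t(p-q)$, and hence $q'=q$.

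For stage (III), I would apply the map $F(q',s)=q'+s(\pi_{K}\circ N)(q')$ of Subsection \ref{Julian} at the point $(q,r)$ with $r:=t\|p-q\|_K$. By the eigenvalue computation underlying \eqref{eq:meanpar}, $dF_{(q,r)}$ is invertible precisely when $r<1/\kappa_i(q)$ for every positive $K$-principal curvature $\kappa_i(q)$ of $\partial\Om$. This focal-distance inequality, which is the \emph{main obstacle}, is inherited from the global minimality of $q\in D(p)$ along the $K$-normal ray from $q$: were $\|p-q\|_K$ to exceed some $1/\kappa_i(q)$, a perturbation of $q$ on $\partial\Om$ would produce a strictly closer foot of $p$. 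Hence $\|p-q\|_K\leq 1/\kappa_i(q)$, and the strict inequality $r=t\|p-q\|_K<\|p-q\|_K\leq 1/\kappa_i(q)$ makes $F$ a local diffeomorphism near $(q,r)$. A standard continuity argument, combining the strict minimality at $\ga$ with the continuity of $\df$ on small $\partial\Om$-neighborhoods of $q$, then produces an open neighborhood $V$ of $\ga$ such that the global foot of every $\ga'\in V$ is given uniquely by $F^{-1}(\ga')$. Therefore $V\subseteq\Om_1$, which proves $\ga\in\intt\Om_1$.
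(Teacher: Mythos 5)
Your Stages (I) and (II) are correct. Stage (I) is routine and the paper does not even record it. Stage (II) is in substance the paper's argument, but you package it more cleanly: the paper splits into the cases ``\(p,q,q'\) collinear'' and ``not collinear'', invoking strict convexity of \(K\) separately in each, whereas you chain the inequalities once and then apply the equality case of the strict triangle inequality to force \(p-\gamma\) and \(\gamma-q'\) to be positively proportional, giving \(q'=q\) in one stroke. Either version is fine; yours is arguably neater.

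Stage (III) is where you genuinely diverge from the paper, and where there is a real gap. The paper does not touch focal points at all. Instead it argues by contradiction: if \(\gamma\in R\), then Corollary~4.11 of \cite{MR2094267} says that every point of the form \(q+\lambda(\gamma-q)\) with \(\lambda>1\) has some boundary point strictly closer than \(q\); but the midpoint \(w\) of \(p\) and \(\gamma\) is exactly such a point, and \eqref{nx} (already proved) gives \(D(w)=\{q\}\), a contradiction. That is the entire argument for \(\gamma\in\intt\Om_1\), and it is self-contained modulo the cited corollary. Your plan instead is to show \(dF_{(q,r)}\) is invertible via a focal-distance bound \(\|p-q\|_K\le 1/\kappa_i(q)\), and then to conclude by a continuity/uniqueness argument. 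You yourself flag the focal inequality as ``the main obstacle'' and dispose of it in one sentence (``a perturbation of \(q\) on \(\partial\Om\) would produce a strictly closer foot''). That sentence is a second-variation claim in the Finsler setting, where the relevant shape operator is the \(K\)-Weingarten map \(W_{K,\Sigma}=-d(\pi_K\circ N)\), which the paper itself points out is \emph{not} self-adjoint; the Euclidean Hessian computation of \(q'\mapsto|p-q'|^2\) along \(\partial\Om\) does not transplant verbatim, and you neither carry it out nor cite a reference for it. Without that inequality your Stage~(III) does not close, and the surrounding machinery (local diffeomorphism plus the observation that every foot \(q''\) of a nearby \(\gamma'\) satisfies \(\gamma'=F(q'',\df(\gamma'))\), plus compactness to exclude distant feet) is correct in outline but also left at the sketch level. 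In short: Stages (I)--(II) are a valid, slightly streamlined version of the paper's argument; Stage (III) replaces the paper's clean reduction to a cited ridge lemma with an unproven focal-distance estimate, and as written does not constitute a proof.
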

\begin{proof}
Let $p,q$ be as in the statement, and fix $\gamma\in (p,q)$. We already know that $D(\gamma)\neq\emptyset$. On the other hand, assume that there exists $q'\neq q$ such that $q'\in D(\gamma)$.
Let us notice that $p,q,q'$ cannot lie on the same line. Indeed, if by contradiction this was the case, then the only possibility is that $p$ is a convex combination of $\gamma$ and $q'$. But then the strict convexity of $K$ would imply that
\begin{equation*}
    \|\ga-q'\|_K\leq\|\ga-q\|_K<\|p-q\|_K\leq\|p-q'\|_K<\|\ga-q'\|_K,
\end{equation*}
which is absurd.
This in particular implies that $p,\gamma,q'$ do not lie on the same line. Therefore, thanks again to the strict convexity of $K$, we get that 
\begin{equation*}
    \|p-q'\|_{K}<\|p-\ga\|_{K}+\|\ga-q'\|_{K}\leq \|p-\ga\|_{K}+\|\ga-q\|_{K}=\|p-q\|_{K},
\end{equation*}
a contradiction to $q\in D(p)$. Hence \eqref{nx} is proved. Assume by contradiction that $\gamma\in R$. By Corollary 4.11 in \cite{MR2094267}, any point of the form $q+\lambda (\ga-q)$ with $\lambda>1$ has a point in $\partial\Om$ closer than $q$. On the other hand, taking $w$  the midpoint of $p$ and $\ga$, then by \eqref{nx} it holds that $D(w)=\{q\}$, which is impossible.
\end{proof}

Let us take a point $p\in\intt{\Om_1}$, and let $q\in D(p)$. Thanks to Proposition \ref{utile}, we know that 
\begin{equation*}
    \df(z)=\|z-q\|_K
\end{equation*}
for any $z$ in $(p,q)$. Recalling that $(p,q)\subseteq\intt{\Om_1}$, together with \eqref{diff}, and Proposition \ref{prop:findist} it is easy to see that
$\nabla\df(z)\neq 0$. Thus, at least locally, the level set $\Sigma_{d_K,\partial\Omega}(p)$ is a well-defined $C^2$ hypersurface. Reasoning as in Section \ref{Julian} we conclude that 
\begin{equation}\label{tange2}
-\divv(\pi_K\circ N_{d_K,\partial\Omega})(p)\geq H_{K_0,\partial\Om}(q)
\end{equation}
for any $p\in\intt{\Om_1}$, where $q\in D(p)$.

\subsection{The Heisenberg group}
\label{sc:heis}
Let $n\ge1$. We denote by $\mathbb{H}^n$ the Heisenberg group, defined as the $(2n+1)$-dimensional Euclidean space $\rr^{2n+1}$ endowed with the non-abelian group law $*$ given by 
\begin{equation}
\label{eq:Hnproduct}
 (x,y,t)*(\bar{x},\bar{y} ,\bar{t})=\left(x+\bar{x}, y+\bar{y},t+\bar{t}+ \sum_{i=1}^n\left(\bar{x}_iy_i-x_i\bar{y}_i\right) \right),   
\end{equation}
where $x=(x_1,\ldots,x_n)$, $\bar{x}=(\bar{x}_1,\ldots,\bar{x}_n)$, $y=(y_1,\ldots,y_n)$ and $\bar{y}=(\bar{y}_1,\ldots,\bar{y}_n)$.
A basis of left-invariant vector fields is given by 
\[
X_i=\dfrac{\partial}{\partial x_i} + y_i \dfrac{\partial}{\partial t}, \qquad Y_i=\dfrac{\partial}{\partial y_i} - x_i \dfrac{\partial}{\partial t}, \qquad T=\dfrac{\partial}{\partial t},
\]
for $i=1,\ldots,n$. 
For $p=(x,y,t) \in \hh^n$, the left translation by $p$ is the diffeomorphism $L_p(q) =p*q$.
We denote by $\mathcal{H}$ the horizontal distribution whose fiber at $p\in \mathbb{H}^n$ is the $2n$-dimensional space
\[
\mathcal{H}_p=\mathrm{span}\{X_i(p),\ Y_i(p)\ |\ i=1,\ldots, n\}.
\]
From now on we will always identify $\mathcal{H}_0$ with $\rr^{2n}$.
We shall consider on $\mathbb{H}^n$ the left-invariant Riemannian metric $g= \escpr{\cdot,\cdot}$, so that the vector fields $\{X_1,\ldots,X_n, Y_1,\ldots,Y_n, T\}$ form an orthonormal basis at every point, and we let $D$ be the Levi-Civita connection associated to the Riemannian metric $g$. 
The Riemannian volume of a set $E$ is, up to a constant, the Haar measure of the group and can be identified with the $(2n+1)-$dimensional Lebesgue measure. We denote it by $|E|$. The integral of a function $f$ with respect to the Riemannian measure is denoted by $\int f\, d\mathbb{H}^n$.
 
\subsection{Sub-Finsler norms and perimeter}
\label{sc:subfinsler}
Let $K_0\subseteq \mathcal{H}_0\equiv \rr^{2n}$ be a convex body in $C^2_+$ (cf. Subsection \ref{notsec}), $0\in \intt K_0$ and let $\|\cdot\|_{K_0}$ be the  associated norm in $\rr^{2n}$. In the following we shall write $\|\cdot\|,\|\cdot\|_{*}$ and $\pi$ instead of $\|\cdot\|_{K_0},\|\cdot\|_{K_0,*}$ and $\pi_{K_0}$ respectively. For any $p\in \mathbb{H}^n$, we define a left-invariant norm $\norm{\cdot}_p$ on $\mathcal{H}_p$ by means of the equality
\[
\norm{v}_{p}=\norm{dL_{p}^{-1}(v)}\quad v\in \mathcal{H}_p
\]
where $dL_p$ denotes the differential of $L_p$.
In particular, for a horizontal vector field $\sum_{i=1}^n f_iX_i+g_iY_i$ its norm at a point $p\in\mathbb{H}^n$ is given by 
\[
\Big\|\sum_{i=1}^n f_i(p)X_i (0)+g_i(p)Y_i(0)\Big\|=\|(f(p),g(p))\|,
\] 
where $f=(f_1,\ldots,f_n)$ and $g=(g_1,\ldots,g_n)$. Similarly, we extend the dual norm $\|\cdot\|_{*}$ and the projection $\pi$ to each fiber of the horizontal bundle.
When $\|\cdot\|$ is $C^l$ with $l\geq 2$, all norms $\|\cdot\|_p$ are $C^l$. Given a horizontal vector field $U$ of class $C^1$, we define $\pi(U)$ as the $C^1$ horizontal vector field satisfying 
\[
\|U\|_{*}=\langle U,\pi(U)\rangle.
\]
Proceeding as in \S~2.3 of \cite{PozueloRitore2021}, it is easy to see that the projection satisfies
\[
\pi\left(\sum_{i}^nf_iX_i+g_iY_i\right)=\mathcal{N}_{K_0}^{-1}\bigg(\frac{(f,g)}{\sqrt{|f|^2+|g|^2}}\bigg),
\]
where $|f|^2=\langle f,f\rangle$. 

\begin{definition}
Given a measurable set $E\subseteq \hh^n$ we say that $E$ has finite horizontal $K_0$-perimeter if
\[
P_{K_0,\mh}(E)= \sup \left\{ \int_E \divv(U) \ d\hh^n, U \in \mathcal{H}^1_0(\hh^n), \norm{U}_{K_0, \infty} \le 1 \right\} < + \infty,
\]
where $\mathcal{H}_0^1(\hh^n)$ is the space of $C^1$ horizontal compactly supported vector fields in $\hh^n$ and $\norm{U}_{K_0, \infty}=\sup_{p \in \hh^n} \norm{U_p}_p$. 
\end{definition}
\begin{remark}
The perimeter associated to the Euclidean norm $|\cdot|$ is the sub-Riemannian perimeter as it is defined in \cite{MR1871966, MR1404326}. A set has finite perimeter for a given norm if and only if it has finite perimeter for the standard sub-Riemannian perimeter. Hence all known results in the standard case apply to the sub-Finsler perimeter.
Moreover, if $E$ has $C^1$ boundary $\partial E$, then 
\[
P_{K_0, \mh}(E)=\int_{\partial E} \norm{N_h}_{*} d\sigma=: A_{K_0,\mh}(\partial E),
\]
where $N_h$ is the projection on the horizontal distribution $\mh$ of the Riemannian normal $N$ with respect to the metric $g$ and $d\sigma$ is the Riemannian measure of $\partial E$. For more details see \S~2.4 in \cite{PozueloRitore2021} when $n=1$.
\end{remark}
As a significant example, we consider a bounded open set $\Om\subseteq \rr^{2n}$ and a $C^1$ function $u:\Omega \to \rr$. Let $\text{Gr}(u)=\{(x,y,t) \in \hh^n \, : \, u(x,y)-t=0\}$ be the graph of $u$. Then we have 
\[
N_h=\dfrac{\sum_{i=1}^n (u_{x_i}-y)X_i +(u_{y_i}+x)Y_i}{\sqrt{1+ |\nabla u +F |^2}} \quad \text{and} \quad d \sigma= \sqrt{1+ |\nabla u +F |^2} \, dx dy,
\]
where $\nabla u(x,y)$ is the Euclidean gradient of $u(x,y)$ and $F(x,y)=(-y,x)$.
Therefore we get 
\[
A_{K_0, \mh}(\text{Gr}(u))= \int_{\Omega}  \norm{ \nabla u +F}_{*} \, dx dy.
\]

\subsection{The Sub-Finsler prescribed mean curvature equation}
Inspired by the previous computation and the sub-Riemannian problem studied by \cite{MR2262784} we consider the following problem. Let $\Omega \subseteq \rr^{2n}$ be a bounded open set and let $F \in L^1(\Omega,\rr^{2n})$, $\varphi\in W^{1,1}(\Om)$ and $H \in L^{\infty}(\Om)$. Then we set 
\begin{equation}
\label{eq:Ifun}
    \mathcal{I}(u)=\int_{\Omega} \|\nabla u +F\|_{*} \, dx dy + \int_{\Omega} Hu \, dx dy
\end{equation}
 for each $u\in W^{1,1}(\Om)$ such that  $u-\varphi \in W_{0}^{1,1}(\Omega)$.
We say that $u\in W^{1,1}(\Om)$ is a \textit{minimizer} for $\mathcal{I}$ if
\[
\mathcal{I}(u)\le \mathcal{I}(v)
\]
for all $v\in W^{1,1}(\Om)$ such that  $v-\varphi \in W_{0}^{1,1}(\Omega)$.
 In  \cite[Section 3]{MR2262784} the authors  investigate the first variation of the  functional $\mathcal{I}$ when $\| \cdot \|_{K_0,*}$ is the Euclidean norm $|\cdot|$, taking into account the bad beaviour of the singular set 
 \begin{equation}
 \label{eq:singularset}
     \Omega_0=\{(x,y) \in \Omega \ : \ (\nabla u +F)(x,y)=0 \}.
 \end{equation}
 In the next result we derive the Euler-Lagrange equation associated to $\mathcal{I}$ for $C^2$ minimizers.
 \begin{proposition}
Let $K_0$ be a $C_+^2$ convex body such that $0  \in \intt(K_0)$. Let $u \in C^2(\Om)$ be a minimizer for $\mathcal{I}$ defined in \eqref{eq:Ifun}. Assume that $F \in C^1(\Om,\rr^{2n})$. Let $\Om_0$ be the singular set defined in \eqref{eq:singularset}. Then $u$ satisfies 
\begin{equation}
\label{eq:HK0}
\begin{aligned}
\divv (\pi(\nabla u +F))=H\qquad\text{ in $\Omega\setminus\Omega_0$.}\\
\end{aligned}
\end{equation}
 \end{proposition}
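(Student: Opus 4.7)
The plan is a standard first variation argument, whose only subtlety is the non-differentiability of the integrand $\|\nabla u + F\|_*$ on the singular set $\Omega_0$. To sidestep this, I will test the minimality of $u$ only against variations compactly supported in $\Omega \setminus \Omega_0$, which is enough to recover \eqref{eq:HK0} on that open set. Since $u \in C^2(\Omega)$ and $F \in C^1(\Omega,\rr^{2n})$, the map $\nabla u + F$ is continuous, so $\Omega_0 = (\nabla u + F)^{-1}(0)$ is closed in $\Omega$ and $\Omega\setminus\Omega_0$ admits an ample supply of test functions in $C_c^\infty(\Omega\setminus\Omega_0)$.

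Fix such a $\phi$ and set $u_s := u + s\phi$. On the compact set $\text{supp}\,\phi$, the function $|\nabla u + F|$ has a uniform positive lower bound, and by the norm equivalence \eqref{in:norms} so does $\|\nabla u + F\|_*$. Hence for $|s|$ small, $\nabla u_s + F$ stays bounded away from the origin on $\text{supp}\,\phi$, where the dual norm $\|\cdot\|_*$ is $C^2$ (since $K_0 \in C_+^2$, as recalled in Section \ref{sc:preliminaries}). Dominated convergence then allows differentiating under the integral sign, and combining this with the gradient identity $\nabla \|v\|_* = \pi(v)$ from \eqref{eq:normpi}, I obtain
$$0 = \frac{d}{ds}\mathcal{I}(u_s)\Big|_{s=0} = \int_\Omega \langle \pi(\nabla u + F), \nabla \phi\rangle\, dx dy + \int_\Omega H\phi\, dx dy.$$

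To conclude, I integrate by parts. This is legitimate in the classical sense because $\pi(\nabla u + F) \in C^1(\Omega\setminus\Omega_0)$ (as the composition of the $C^1$ map $\pi$ on $\rr^{2n}\setminus\{0\}$ with a $C^1$ map) and $\phi$ is compactly supported in $\Omega\setminus\Omega_0$, so no boundary terms arise. The resulting identity
$$\int_\Omega \bigl(H - \divv(\pi(\nabla u + F))\bigr)\phi\, dx dy = 0,$$
valid for every $\phi \in C_c^\infty(\Omega\setminus\Omega_0)$, yields \eqref{eq:HK0} by the fundamental lemma of the calculus of variations together with the continuity of $H - \divv(\pi(\nabla u + F))$ on $\Omega\setminus\Omega_0$. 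I do not foresee a genuine obstacle: the singular set is entirely sidestepped by the choice of test functions, and every remaining ingredient (the smoothness of $\|\cdot\|_*$ away from the origin, the formula $\nabla\|\cdot\|_* = \pi$, and norm equivalence) is already collected in Sections~\ref{sc:preliminaries}.
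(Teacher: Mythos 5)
Your proposal is correct and follows essentially the same route as the paper: both restrict to test functions compactly supported in $\Omega\setminus\Omega_0$, compute the first variation using $\nabla\|\cdot\|_* = \pi(\cdot)$, and integrate by parts. The only cosmetic difference is that the paper delegates the first variation formula to an external reference (Lemma 3.2 of \cite{PozueloRitore2021}), whereas you justify differentiation under the integral directly via dominated convergence on the compact support, which is a self-contained and equally valid way to reach the same identity.
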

 \begin{proof}
Given $v \in C^{\infty}_c(\Omega\setminus\Omega_0)$, by \cite[Lemma 3.2]{PozueloRitore2021}  the first variation is given by 
\begin{align*}
\dfrac{d}{ds}\Big|_{s=0} \mathcal{I}(u+sv)&=\int_{\Omega\setminus\Omega_0} \dfrac{d}{ds}\Big|_{s=0} \norm{\nabla (u+sv) +F}_{*}  \ dx dy + \int_{\Omega\setminus\Omega_0}H v \ dx dy\\
&=\int_{\Omega\setminus\Omega_0} \dfrac{d}{ds}\Big|_{s=0} \norm{\nabla u +F+ s\nabla v}_{*}  \ dx dy+ \int_{\Omega\setminus\Omega_0}H v \ dx dy\\
&= \int_{\Omega\setminus\Omega_0}  \escpr{ \nabla v, \pi(\nabla u +F))}  \ dx dy+ \int_{\Omega\setminus\Omega_0}H v \ dx dy\\
&=  \int_{\Omega\setminus\Omega_0}   v \, \left( H- \divv (\pi(\nabla u +F)) \right) \ dx dy.\qedhere
\end{align*}
 \end{proof}

\begin{remark}\label{reminfondo}
When $K_0$ is the unit disk $D_0 \subseteq \rr^{2n}$ centered at $0$ of radius $1$  we have 
\[
\pi_{D_0}(\nabla u +F)= \dfrac{\nabla u +F}{|\nabla u +F|}
\]
and \eqref{eq:HK0} is equivalent to 
\begin{equation*}
 \divv \left(\frac{\nabla u +F}{|\nabla u +F|}\right)=H.
\end{equation*}
\end{remark}

\section{The Finsler approximation problem}
\label{sc:Fap}
In this section we develop the Finsler approximation scheme in order to get rid of the singular nature of equation \eqref{eq:HK0}. To this aim, given $K_0$ a convex body in $C^{2}_+$ such that $0\in\intt{K_0}$ and $\varepsilon\in (0,1)$, we
 denote by $K_\eps$ the set
\begin{equation}
	\label{eq:Keps}
	K_\varepsilon:=\left\{(x,y,t)\in\rr^{2n+1} : \left(\frac{|t|}{\varepsilon}\right)^\frac{3}{2}+\|(x,y)\|^\frac{3}{2}\leq 1\right \}.
\end{equation}

Notice that $K_\eps \subseteq \rr^{2n+1} \equiv T_0 \hh^n$ (here $T_0 \hh^n$ denotes the tangent space of $\hh^n$ at $p=0$) is a strictly convex body with $0\in\intt(K_\eps)$. Moreover $\partial K_\eps$ is of class $C^1$. Indeed it is a level set of the $C^1$ function
$g_\eps(x,y,t):=\left(\frac{|t|}{\varepsilon}\right)^\frac{3}{2}+\|(x,y)\|^\frac{3}{2},$
whose gradient never vanishes on $\partial K_\eps$. Hence, the projection $\pi_{K_\eps}$ is well-defined and continuous. We shall write $\|\cdot\|_\eps$, $\|\cdot\|_{\eps,*}$ and $\pi_\eps$ instead of $\|\cdot\|_{K_\eps}$, $\|\cdot\|_{K_\eps,*}$ and $\pi_{K_\eps}$ respectively. The map $\pi_\eps^h$ is defined as the first $2n$ components of $\pi_\eps$. By abuse of notation, we write $\pi_\eps^h(x,y)=\pi_\eps^h(x,y,-1)$ when there is no confusion.

\begin{proposition}
	\label{prop:pieps2}
	Let $K_0$ be a convex body in $C^2_+$ such that $0\in\intt{K_0}$, and
	 let $K_\eps\subseteq \rr^{2 n+1}$ be the set defined in \eqref{eq:Keps}.	Then the following assertions hold:
	\begin{enumerate}

		\item[(i)] The map $\pi_\eps^h:\rr^{2n}\smallsetminus\{0\}\to\rr^{2n}$ satisfies
		$$\pi_\varepsilon^h(x,y)=\pi(x,y)\frac{\|(x,y)\|^2_{\ast}}{\left(\varepsilon^3+\|(x,y)\|_{\ast}^3\right)^\frac{2}{3}}.$$
		\item[(ii)] The map $\pi_ \eps^h$ can be extended to a $C^1$ map in $\rr^{2n}$ by setting $\pi^h_\eps(0,0)=(0,0)$.
		\item[(iii)] $\norm{(x,y,-1)}_{K_{\eps},*}=\left(\varepsilon^3+\|(x,y)\|_{\ast}^3\right)^\frac{1}{3}$.		
		
	\end{enumerate}
\end{proposition}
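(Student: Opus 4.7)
The strategy is to compute $\|(x,y,-1)\|_{K_\eps,\ast}$ directly from the sup definition \eqref{sup}: the value of the supremum gives (iii), while the unique maximizer (which exists and is unique by strict convexity of $K_\eps$) is $\pi_\eps(x,y,-1)$, whose horizontal part gives (i). Item (ii) then follows by identifying $\pi_\eps^h$ with the gradient of an explicit scalar function on $\rr^{2n}$.

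For (i) and (iii), I would unpack the definition
\[
\|(x,y,-1)\|_{K_\eps,\ast} = \sup\{\langle(x,y),(a,b)\rangle - \tau : (|\tau|/\eps)^{3/2}+\|(a,b)\|^{3/2}\leq 1\}.
\]
Since the constraint depends only on $|\tau|$, the optimum sits at $\tau=-s$ with $s\geq 0$. Fixing $\rho:=\|(a,b)\|$ and maximizing $\langle(x,y),(a,b)\rangle$ over the $K_0$-ball of radius $\rho$ gives, by strict convexity of $K_0$, the unique maximizer $(a,b)=\rho\,\pi(x,y)$ with value $\rho r$, where $r:=\|(x,y)\|_\ast$. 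The problem thus reduces to
\[
\sup_{\rho,s\geq 0,\;\rho^{3/2}+(s/\eps)^{3/2}\leq 1}(r\rho+s).
\]
Applying Hölder's inequality with conjugate exponents $3/2$ and $3$ to the pair $(\rho,s/\eps)$ against $(r,\eps)$ yields $r\rho+s\leq(\eps^3+r^3)^{1/3}$, with equality precisely when $\rho^{3/2}/r^3 = (s/\eps)^{3/2}/\eps^3$ and the constraint is active. Solving this system gives $\rho=r^2/(\eps^3+r^3)^{2/3}$ and $s=\eps^3/(\eps^3+r^3)^{2/3}$, which simultaneously proves (iii) and pins down the unique optimizer, delivering (i).

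For (ii), I would observe that, using \eqref{eq:normpi} and the chain rule, the formula in (i) can be rewritten as
\[
\pi_\eps^h(v) = \nabla_v\bigl[(\eps^3+\|v\|_\ast^3)^{1/3}\bigr] \qquad (v\neq 0).
\]
Since $t\mapsto(\eps^3+t)^{1/3}$ is smooth on $[0,\infty)$, it suffices to prove that $\phi(v):=\|v\|_\ast^3$ belongs to $C^2(\rr^{2n})$. Away from $0$ this is immediate from $K_0\in C^2_+$. At $v=0$, using the estimate $\|v\|_\ast\leq C|v|$, the boundedness of $\pi$, and the $(-1)$-homogeneity of $D\pi$, both
\[
\nabla\phi(v)=3\|v\|_\ast^2\pi(v), \qquad \partial_i\partial_j\phi(v)=6\|v\|_\ast\pi_i(v)\pi_j(v)+3\|v\|_\ast^2\partial_i\pi_j(v)
\]
tend to $0$ as $v\to 0$, so $\phi$ has vanishing first- and second-order jets at the origin and is therefore $C^2(\rr^{2n})$. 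This extends the gradient formula to all of $\rr^{2n}$ and shows that $\pi_\eps^h$ is $C^1$ with $\pi_\eps^h(0,0)=(0,0)$.

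The main technical delicacy is the origin regularity in (ii): although $\pi$ is only $0$-homogeneous and fails to be continuous at $0$, multiplication by $\|v\|_\ast^2$ provides enough vanishing to absorb the $|v|^{-1}$ blow-up of $D\pi$. This is precisely why the exponent $3/2$ was baked into the definition of $K_\eps$; a smaller power would fail to make $\pi_\eps^h$ continuous at the origin, let alone $C^1$.
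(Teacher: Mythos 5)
Your proof is correct, and for parts (i) and (iii) it takes a genuinely different and arguably cleaner route than the paper's. The paper first establishes the ansatz $\pi_\varepsilon(x,y,-1)=(\alpha\pi(x,y),-\varepsilon(1-\alpha^{3/2})^{2/3})$ by slicing $K_\eps$ at fixed height $t_0$ to obtain a dilated copy $K_{t_0}$ of $K_0$ and using that $\pi_\eps$ is the inverse Gauss map; it then determines $\alpha$ by a one-dimensional first-order optimality condition along a curve $\beta(s)\subseteq\partial K_\eps$. You instead unpack the support-function definition of $\|(x,y,-1)\|_{K_\eps,*}$ directly, reduce (after optimizing the horizontal direction via $(a,b)=\rho\,\pi(x,y)$, exactly as in \eqref{sup2}) to a two-variable constrained maximization, and settle both the optimal value (iii) and the optimizer (i) in one stroke via H\"older's inequality with exponents $3/2$ and $3$ together with its equality condition. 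This avoids the geometric discussion of the Gauss map of $K_\eps$ entirely, handles the structure of the minimizer and the value simultaneously, and makes transparent why the exponent $3/2$ in \eqref{eq:Keps} produces the $(\eps^3+r^3)^{1/3}$ profile. The paper's slicing argument, in exchange, is perhaps more geometrically intuitive and directly exhibits the full $(2n+1)$-dimensional vector $\pi_\eps(x,y,-1)$, not only its horizontal part.

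For (ii), the paper disposes of it in one line by invoking the $2$-homogeneity of $G(v)=\pi(v)\|v\|_*^2$; you instead rewrite $\pi_\eps^h$ as $\nabla\bigl[(\eps^3+\|v\|_*^3)^{1/3}\bigr]$ using \eqref{eq:normpi} and reduce to showing $\|v\|_*^3\in C^2(\rr^{2n})$ by checking that the first- and second-order partial derivatives, computed away from the origin, extend continuously by $0$. This is sound (and in substance is the same homogeneity count spelled out), though strictly speaking you are implicitly using the standard lemma that a function which is $C^k$ off a point and whose partials up to order $k$ extend continuously to that point is globally $C^k$; it would be worth stating that step explicitly.
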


\begin{proof}
Let us prove that
\begin{equation}\label{eq:prepro}
\pi_\varepsilon(x,y,-1)=(\alpha\pi(x,y),-\varepsilon (1-\alpha^{3/2}))^{2/3}
\end{equation}
for some $0<\alpha(x,y)<1$. Given $(x,y)$ in $\rr^{2n}\setminus\{0\}$, we denote by $t_0$ the $(2n+1)$-th coordinate of $\pi_\varepsilon(x,y,-1)$ and we let $K_{t_0}\subseteq \mathbb{R}^{2n}$ be the convex set defined by $$K_{t_0}:=\{(x',y')\ : \ (x',y',t_0)\in K_\eps\}.$$ Then we have
\begin{equation*}
\begin{split}
    K_{t_0}\times\{t_0\}=\Bigl\{\Big(\frac{|t_0|}{\varepsilon}\Big)^\frac{3}{2}+\|(x',y')\|^{\frac{3}{2}}\leq 1  \Bigr\}
    =\left\{\|(x',y')\|\leq \left( 1-\left(\frac{|t_0|}{\varepsilon}\right)^\frac{3}{2}\right)^{\frac{2}{3}}  \right\}. 
    \end{split}
\end{equation*}
Hence it follows that $\pi_{t_0}=(1-(\frac{|t_0|}{\eps})^{\frac{3}{2}})^{\frac{2}{3}}\pi$. On the other hand, since $\pi_\eps$ is the inverse of the Gauss map, we can see that $(x,y,-1)$ is normal to $\partial K_\eps$ at $\pi_\eps(x,y,-1)$ and so $(x,y)$ is normal to $\partial K_{t_0}$ at $\pi^h_\eps(x,y)$, where $0<t_0<1$ satisfies $\|\pi_\eps^h(x,y)\|^{\frac{3}{2}}+(\frac{|t_0|}{\varepsilon})^\frac{3}{2}=1$. Since $K_{t_0}$ is strictly convex, the projection is unique and $\pi_\eps^h(x,y)=\pi_{t_0}(x,y)$. 
 Hence \eqref{eq:prepro} follows.
Taking the scalar product of $(x,y,-1)$  with the curve  $\beta(s)=(s\pi(x,y),-\varepsilon(1-s^{3/2})^{2/3})$, we get
\begin{equation*}
\langle(x,y,-1),\beta(s) \rangle=s\|(x,y)\|_{\ast}+\varepsilon (1-s^{3/2})^{2/3}.
\end{equation*}
Notice that $\beta$ is in $\partial K_\eps$ and $\beta(\alpha)$ is $\pi_\eps$. Hence in $s=\alpha$ the maximum of the scalar products of $(x,y,-1)$ with an element of $K_\varepsilon$ is attained. Thus we can take derivatives in $s=\alpha$, set them equal to $0$ and get
\[
0=\|(x,y)\|_{\ast}-\varepsilon\frac{\alpha^{\frac{1}{2}}}{(1-\alpha^{3/2})^{\frac{1}{3}}}.
\]
Then we obtain  \[
\alpha=\frac{\|(x,y)\|_{\ast}^2}{(\varepsilon^3+\|(x,y)\|_{\ast}^3)^{2/3}}\] and we get $(i)$. Since $\norm{(x,y,-1)}_{K_{\eps},*}=\langle(x,y,-1),\pi_\varepsilon(x,y,-1) \rangle$, a straightforward computation shows $(iii)$.
Finally, $(ii)$ follows from $(i)$ and the $2$-homogeneity of the map $\pi(\cdot)\|\cdot\|^2_{*}$.
\end{proof}

\begin{lemma} \label{lm:dernorm}
Let $u,v \in T_0 \hh^n$ and $s \in \rr$. Then we have 
\begin{equation}\label{eq:dernorm}
\dfrac{d}{ds}\Big|_{s=0} \norm{u+sv}_{\eps,*}= \escpr{v,\pi_{\eps}(u)}.
\end{equation}
\end{lemma}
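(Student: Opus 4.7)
The plan is to verify that $\|\cdot\|_{\eps,*}$ is differentiable at $u$ in the direction $v$, with directional derivative $\langle v,\pi_\eps(u)\rangle$. This is essentially the analogue of \eqref{eq:normpi} for $K_\eps$, which is strictly convex with $C^1$ boundary (as observed right after \eqref{eq:Keps}), although not of class $C^2_+$; for this reason I would not invoke \eqref{eq:normpi} verbatim but argue by elementary convex duality. Throughout I would assume $u\neq 0$; the case $u=0$ is not needed in the applications of this lemma, as it will be invoked along horizontal vectors outside the singular set.

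First I would establish the two-sided sandwich
\begin{equation*}
    \|u\|_{\eps,*} + s\langle v, \pi_\eps(u)\rangle \;\leq\; \|u+sv\|_{\eps,*} \;\leq\; \|u\|_{\eps,*} + s\langle v, \pi_\eps(u+sv)\rangle
\end{equation*}
for $s>0$ sufficiently small (the inequalities reverse for $s<0$). The lower bound follows by testing the supremum in \eqref{sup} that defines $\|u+sv\|_{\eps,*}$ against the admissible element $\pi_\eps(u)\in\partial K_\eps$. The upper bound uses \eqref{sup2} to write $\|u+sv\|_{\eps,*}=\langle u+sv,\pi_\eps(u+sv)\rangle$ and then bounds $\langle u,\pi_\eps(u+sv)\rangle\leq \|u\|_{\eps,*}$, since $\pi_\eps(u+sv)\in\partial K_\eps$ is admissible in the supremum defining $\|u\|_{\eps,*}$.

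Next I would divide the sandwich by $s$ and pass to the limit as $s\to 0^+$. By the strict convexity of $K_\eps$ and the $C^1$ regularity of $\partial K_\eps$, the map $\pi_\eps$ is continuous on $\rr^{2n+1}\setminus\{0\}$, so $\langle v,\pi_\eps(u+sv)\rangle\to\langle v,\pi_\eps(u)\rangle$, and the two outer terms collapse to $\langle v,\pi_\eps(u)\rangle$. The same argument for $s\to 0^-$ (with the inequalities reversed) yields the matching left derivative. Hence the two-sided derivative at $s=0$ exists and equals $\langle v,\pi_\eps(u)\rangle$, which is \eqref{eq:dernorm}.

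The main subtlety is ensuring continuity of $\pi_\eps$ at the chosen $u$, which ultimately relies on the strict convexity of $K_\eps$ guaranteeing uniqueness of the maximizer in \eqref{sup2}; this was already secured during the construction in \eqref{eq:Keps} and Proposition \ref{prop:pieps2}. No further regularity of $\partial K_\eps$ beyond $C^1$ is needed for the argument above, which is convenient since the proof of Proposition \ref{prop:pieps2} only gives $C^1$ regularity of $\partial K_\eps$.
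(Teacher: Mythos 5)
Your proof is correct and follows the same sandwich idea as the paper's, but spells out the details that the paper compresses into the phrase ``by a standard argument.'' The paper only exhibits the one-sided comparison $f(s)=\|u+sv\|_{\eps,*}\geq g(s)=\escpr{u+sv,\pi_\eps(u)}$ with equality at $s=0$, and then concludes $f'(0)=g'(0)$ by Fermat; this implicitly assumes that $f$ is differentiable at $0$, a fact that relies on strict convexity and $C^1$ regularity of $\partial K_\eps$ but is not stated. Your version closes this loop by supplying the matching upper bound $\|u+sv\|_{\eps,*}\leq\|u\|_{\eps,*}+s\escpr{v,\pi_\eps(u+sv)}$ and then passing to the limit on both sides using continuity of $\pi_\eps$ away from the origin, which yields both one-sided derivatives and hence differentiability directly, without any appeal to an exterior differentiability fact. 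You also correctly flag that the statement requires $u\neq0$ for $\pi_\eps(u)$ to be defined, and that this is harmless because in the paper the lemma is only applied to vectors of the form $(\nabla u+F,-1)$, whose last coordinate is nonzero. One small wording nit: the two sandwich inequalities themselves hold for \emph{all} $s\in\rr$; what flips for $s<0$ is the direction of the inequalities for the difference quotients after dividing by $s$, not the inequalities themselves. This does not affect the validity of the argument.
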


\begin{proof}
Let $f(s)=\norm{u+sv}_{\eps,*}$ and $g(s)=\escpr{u+sv, \pi_{\eps}(u)}$. 
Notice that $f(s) \ge g(s)$  for each $s \in \rr$, since by definition  $\norm{u+sv}_{\eps,*}\ge \escpr{u+sv, \pi_\eps(u)}$ and  $f(0)=\norm{u}_{\eps,*}=\escpr{u,\pi_{\eps}(u)}=g(0)$. Therefore, by a standard argument $f'(0)=g'(0)$, and the thesis follows.
\end{proof}

Given a convex body $K_0\subseteq \mathbb{R}^{2n}$ in $C^2_+$ with $0\in\intt(K_0)$, and $K_\eps$ defined as in \eqref{eq:Keps}, we extend the reasoning of the previous section to define a left-invariant norm $\norm{\cdot}_{\eps}$ on $T\hh$ by means of the equality
\[
\Big\|\sum_{i=1}^n f_iX_i+g_i Y_i+hT \Big\|_{\eps, p}=\norm{(f(p),g(p),h(p))}_{\eps},
\] 
for any $p\in\hh^n$ with  $f=(f_1,\ldots,f_n)$ and $g=(g_1,\ldots,g_n)$. Again, $\|\cdot\|_{\eps,\ast}$ and $\pi_\eps$ can be extended to the tangent bundle in the usual way.
\begin{definition}
Given a measurable set $E\subseteq \hh^n$ we say that $E$ has finite $K_{\eps}$-perimeter if
\[
P_{K_{\eps}}(E)= \sup \left\{ \int_E \divv(U) \ d\hh^n, U \in \mathfrak{X}_0(\hh^n), \norm{U}_{K_{\eps}, \infty} \le 1 \right\} < + \infty,
\]
where $\norm{U}_{K_{\eps}, \infty}=\sup_{p \in \hh^n} \norm{U_p}_{\eps}$ and $\mathfrak{X}_0(\hh^n)$ is the space of $C^1$ compactly supported vector fields in $\hh^n$. 
\end{definition}

\begin{remark}
If $E$ has $C^1$ boundary $\partial E$, then 
\[
P_{K_{\eps}}(E)=\int_{\partial E} \norm{N}_{\eps,*} d\sigma= A_{\eps}(\partial E),
\]
where $N$ is the Riemannian normal with respect to the metric $g$ and $d\sigma$ is the Riemannian measure of $\partial E$.
Indeed by the divergence theorem we have 
\begin{align*}
P_{K_{\eps}}(E)&=\sup \left\{ \int_E \divv(U) \ d\hh^n, U \in \mathfrak{X}_0(\hh^n), \norm{U}_{K_{\eps}, \infty} \le 1 \right\}\\
&=\sup \left\{ \int_{\partial E} \escpr{U,N} \ d\hh^n, U \in \mathfrak{X}_0(\hh^n), \norm{U}_{K_{\eps}, \infty} \le 1 \right\}\\ &=\int_{\partial E} \norm{N}_{\eps,*} d\sigma,
\end{align*}
where the last equality can be proved proceeding exactly as in \cite{MR1871966, MR1404326}.
\end{remark}

\subsection{The Finsler prescribed mean curvature equation}
We are ready to derive the Finsler prescribed mean curvature equation, essentially in the same way as in the previous section. To this aim, let
$\Omega \subseteq \{t=0\}$ be a bounded open set and $u:\Omega \to \rr$ be a $C^2$ function. Then we have
\[
N=\dfrac{\sum_{i=1}^n (u_{x_i}-y)X_i +(u_{y_i}+x)Y_i -T}{\sqrt{1+ |\nabla u +F |^2}} \quad \text{and} \quad d \sigma= \sqrt{1+ |\nabla u +F |^2} \, dx dy,
\]
where $F(x,y)=(-y,x).$
Therefore we get 
\[
A_{K_{\eps}}(\text{Gr}(u))= \int_{\Omega}  \norm{(\nabla u +F,-1)}_{\eps,*} \, dx dy.
\]
Hence, inspired by this computation and thanks to Proposition \ref{prop:pieps2}, given $F \in L^1(\Omega,\rr^{2n})$, $\varphi\in W^{1,1}(\Om)$ and $H \in L^{\infty}(\Om)$, we define the approximating Finsler functional $\mathcal{I}_\eps$ by
\begin{equation}
\label{eq:Iepsfun}
    \mathcal{I}_\eps(u)=\int_{\Omega} \left(\varepsilon^3+\|(\nabla u+F)\|_{\ast}^3\right)^\frac{1}{3} \, dx dy + \int_{\Omega} Hu \, dx dy,
\end{equation}
for any $u\in W^{1,1}(\Om)$ such that $u-\varphi\in W^{1,1}_0(\Om)$. Arguing as in the previous section, and thanks to Lemma \ref{lm:dernorm}, we are able to deduce the Euler-Lagrange equation associated to \eqref{eq:Iepsfun}. Indeed, given $v \in C^{\infty}_c(\Omega)$, by Lemma \ref{lm:dernorm}, the first variation is given by:
\begin{align*}
\frac{d}{ds}\Big|_{s=0} \mathcal{I}_\eps(u+sv)&=\int_{\Omega} \dfrac{d}{ds}\Big|_{s=0} \norm{(\nabla (u+sv) +F,-1)}_{\eps,*}  \ dx dy+\int_\Om Hv\,dxdy\\
&=\int_{\Omega} \dfrac{d}{ds}\Big|_{s=0} \norm{(\nabla u +F,-1)+ s(\nabla v,0)}_{\eps,*}  \ dx dy+\int_\Om Hv\,dxdy\\
&= \int_{\Omega}  \escpr{ (\nabla v,0), \pi_{\eps}((\nabla u +F,-1))}  \ dx dy+\int_\Om Hv\,dxdy\\
&= \int_{\Omega}  \escpr{ \nabla v, \pi^h_{\eps}(\nabla u +F)}  \ dx dy+\int_\Om Hv\,dxdy\\
&=\int_{\Omega}   v (H-\divv (\pi^h_\eps(\nabla u +F)))\,dx dy.
\end{align*}
\begin{comment}
\deleted[id=S]{where  we set  $$\pi_{\eps}(a,b,c)=\pi_{K_{\eps}}(a,b,c)=\sum_{i=1}^n \pi_i(a,b,c) X_i+ \pi_{2i}(a,b,c) Y_i+\pi_{2n+1} (a,b,c) T$$ and 
\begin{equation}
\label{eq:pihK}
\pi_{\eps}^h(a, b,c)=\sum_{i=1}^n\pi_{i} (a,b,c) X_i+ \pi_{2i}(a,b,c) Y_i
\end{equation}
for a vector field $U=\sum_{i=1}^n a_i X_i+ b_iY_i+cT$ with $a=(a_1,\ldots,a_n)$ and $b=(b_1,\ldots,b_n)$.}
\end{comment}
Then the Finsler prescribed mean curvature equation for the graph of $u$ is given by
\begin{equation}
\label{eq:HK}
\divv (\pi^h_{\eps}(\nabla u +F))=H\quad \mbox{in}\ \Omega.
\end{equation}
As already pointed out in the introduction, \eqref{eq:HK} is only degenerate elliptic in the singular set (cf. the computations of Section \ref{sc:aprioriestimate}). Therefore, in the next section, we will perturb \eqref{eq:HK} as in \eqref{findiv24intro} in order to apply the aforementioned classical Schauder fixed-point theory for elliptic equations.

\section{A priori estimates for the Finsler Prescribed Mean Curvature Equation}
\label{sc:aprioriestimate}
In this section we want to find classical solutions to the regularized Finsler approximating Dirichlet problem associated to \eqref{findiv24intro}, that is
\begin{equation}\label{riemreg}
 	\begin{cases}	\divv\left(\pi^h_{\eps}(\nabla u +F)\right)+\eta\divv\left(\frac{\nabla u+F}{\sqrt{1+|\nabla u+F|^2}}\right)=H&\text{ in  }\Om\\
		u=\varphi &\text{ in } \partial \Om,
	\end{cases}
 \end{equation}
where $\eps,\eta\in(0,1)$, $\Om\subseteq\rr^{2n}$ is a bounded domain with $C^{2,\alpha}$ boundary for $0<\alpha<1$, $K_0$ is a convex body in $C^{2,\alpha}_+$ with $0\in\intt{K_0}$, $H\in Lip(\overline\Om)$, $F =(F_1,\ldots,F_{2n})\in C^{1,\alpha}(\overline\Om,\rr^{2n})$ and $\varphi\in C^{2,\alpha}(\overline\Om)$. To this aim, let us fix some notation. It is easy to see that the map $G:\mathbb{R}^{2n}\setminus \{0\}\to \mathbb{R}^{2n}$ defined by $G(p)=\pi(p)\|p\|_{*}^2$ can be extended to a $2$-homogeneous and $C^1$ map setting $G(0)=0$. Moreover, for any $i=1,\ldots, 2n$
\begin{equation*}
	D_i(\|\cdot\|_{*}^3)=3G_i(\cdot),
\end{equation*}
where $G=(G_1,\ldots, G_{2n})$.
Thanks to Proposition \ref{prop:pieps2}, we can write the first equation of \eqref{riemreg} in the form
\begin{equation}\label{findiv24}
	\divv\left(\pi(\nabla u+F)\frac{\|\nabla u+F\|^2_{\ast}}{\left(\varepsilon^3+\|\nabla u+F\|_{\ast}^3\right)^\frac{2}{3}}\right)+\eta\divv\left(\frac{\nabla u+F}{\sqrt{1+|\nabla u+F|^2}}\right)=H.
\end{equation}
An easy computation yields 
\begin{equation*}
\label{eq:ddtG}
	\begin{split}
		\frac{1}{(\eps^3+\|\nabla u+F\|_{*}^3)^\frac{5}{3}}\big((\eps^3&+\|\nabla u+F\|_{*}^3)\divv(G(\nabla u+F))\\
		&-2G(\nabla u+F)(D^2u+DF)G(\nabla u+F)^T\big)\\
  &+\frac{\eta}{(1+|\nabla u+F|^2)^\frac{3}{2}}\left((1+|\nabla u+F|^2)\divv\big(\nabla u+F\right)\\
  &-(\nabla u+F)(D^2u+DF)(\nabla u+F)^T\big)=H.
	\end{split}
\end{equation*}
Therefore, we can write \eqref{findiv24} in the familiar form
\begin{equation*}
    \sum_{i,j=1}^{2n} A_{i,j}^{\eps,\eta}(z,\nabla u; F) D_{i,j} u+ B^{\eps,\eta}(z,\nabla u; F)=H,
\end{equation*}
where the coefficients $A_{i,j}^{\eps,\eta}$ and $B^{\eps,\eta}$ are defined by
\begin{equation}\label{coefficientsaaaaaaaa}
\begin{split}
    A_{i,j}^{\eps,\eta}(z,p; F)&:=\frac{1}{(\eps^3+\|p+F \|_*^3)^{\frac{2}{3}}} D_j G_i(p+F)-\frac{2}{(\eps^3+\|p+F \|_{*}^3)^{\frac{5}{3}}} G_i(p+F) G_j(p+F)\\
    &\quad+\frac{\eta}{\sqrt{1+|p+F|^2}}\delta_{ij}-\frac{\eta}{(1+|p+F|^2)^\frac{3}{2}}(p_i+F_i)(p_j+F_j)
\end{split}
\end{equation}
and
\begin{equation*}
    \begin{split}
         B^{\eps,\eta}(z,p; F) &:=\frac{1}{(\eps^3+\|p+F \|_*^3)^{\frac{2}{3}}} \sum_{i,j=1}^{2n} D_j G_i(p+F) D_i F_j\\
    &\quad-\frac{2}{(\eps^3+\|p+F \|_{*}^3)^{\frac{5}{3}}} G(p+F)\, DF \, G(p+F)^T\\
    &\quad+\frac{\eta}{\sqrt{1+|p+F|^2}}\divv F-\frac{\eta}{(1+|p+F|^2)^\frac{3}{2}}(p+F)DF(p+F)^T
    \end{split}
\end{equation*}
for any $z\in\Om$ and $p=(p_1,\ldots,p_{2n})\in\rr^{2n}$.
Therefore \eqref{findiv24} is a second-order quasi-linear equation. Moreover, thanks to the computations of the previous section and $(iii)$ in Proposition \ref{prop:pieps2}, we know that \eqref{findiv24} is the Euler-Lagrange equation associated to the functional
\[
u\mapsto \int_{\Omega}  \left(\eps^3+\norm{ \nabla u +F}_{*}^3\right)^{\frac{1}{3}}+\eta\sqrt{1+|\nabla u+F|^2} +uH \, dz.
\]
Notice that the matrix $A^{\varepsilon,\eta}$ is symmetric. Moreover, observing that
\begin{equation}\label{DG}
D_j (G_i(p))=\begin{cases}
2 \|p\|_{*} \pi_{j}(p)\pi_{i}(p)+ \|p\|_{*}^2 D_i \pi_{j}(p) & \text{if}\, p \ne0\\
0& \text{if}\, p =0,
\end{cases}
\end{equation}
we infer that \eqref{findiv24} is an elliptic equation. Indeed, assume first that $p+F=0$. Then, by \eqref{coefficientsaaaaaaaa} and \eqref{DG}
\begin{equation*}
    \sum_{i,j=1}^{2n}A_{i,j}^{\eps,\eta}(z,p; F)\xi_i\xi_j=\eta|\xi|^2
\end{equation*}
for any $\xi\in\rr^{2n}$. From the other hand, when $p+F\neq 0,$ \eqref{positivedefinite}, \eqref{DG} and the Cauchy-Schwarz inequality imply that
\begin{equation}\label{ellipticityconstant}
    \begin{split}
\sum_{i,j=1}^{2n}A_{i,j}^{\eps,\eta}&(z,p; F)\xi_i\xi_j=\sum_{i,j=1}^{2n}\frac{2\|p+F\|_*\pi_i(p+F)\pi_j(p+F)\xi_i\xi_j+\|p+F\|_*^2D_i\pi_j(p+F)\xi_i\xi_j}{(\eps^3+\|p+F \|_*^3)^{\frac{2}{3}}}\\
&\quad-\sum_{i,j=1}^{2n}\frac{2\|p+F\|_*^4\pi_i(p+F)\pi_j(p+F)\xi_i\xi_j}{(\eps^3+\|p+F \|_{*}^3)^{\frac{5}{3}}}+\eta\frac{(1+|p+F|^2)|\xi|^2-\langle p+F,\xi\rangle^2}{(1+|p+F|^2)^\frac{3}{2}}\\
&\geq \frac{\|p+F\|_*^2}{(\eps^3+\|p+F \|_{*}^3)^{\frac{2}{3}}}\left(\xi\,  D\pi(p+F)\,\xi^T\right)+\eta\frac{|\xi|^2}{(1+|p+F|^2)^\frac{3}{2}}\\
&>\eta\frac{|\xi|^2}{(1+|p+F|^2)^\frac{3}{2}}
    \end{split}
\end{equation}
for any $\xi\in\rr^{2n}$, so that we conclude that
\begin{equation}\label{actuallyelliptic}
\sum_{i,j=1}^{2n}A_{i,j}^{\eps,\eta}(z,p; F)\xi_i\xi_j\geq \frac{\eta}{(1+|p+F|^2)^\frac{3}{2}}|\xi|^2
\end{equation}
for any $z\in\overline\Om$ and any $p,\xi\in\rr^{2n}$. We remark that, by \eqref{ellipticityconstant}, equation \eqref{eq:HK} is elliptic outside the singular set.
In view of \eqref{actuallyelliptic}, we are in position to apply the classical theory for quasi-linear elliptic equations of \cite{GT}. In particular, we wish to rely on the following fundamental result, which is a direct consequence of \cite[Theorem 13.8]{GT} and subsequent remarks.

\begin{proposition}\label{gt}
\label{prop:Fex}
	Let $\Om\subseteq\rr^{2n}$ be a bounded domain with $C^{2,\alpha}$ boundary, for some $0<\alpha<1$, and let $\varphi\in C^{2,\alpha}(\overline\Om)$. Let us assume that $A_{i,j}^{\eps,\eta}(\cdot,\cdot;\sigma F),B^{\eps,\eta}(\cdot,\cdot;\sigma F)\in C^{\alpha}(\overline\Om\times\rr^{2n})$ for any $\sigma\in[0,1]$, and that the maps
	\begin{equation*}
	    \sigma\mapsto A_{i,j}^{\eps,\eta}(\cdot,\cdot;\sigma F),\quad\sigma\mapsto B^{\eps,\eta}(\cdot,\cdot;\sigma F)
	\end{equation*}
	are continuous as maps from $[0,1]$ to $C^{\alpha}(\overline\Om\times\rr^{2n})$.  If there exists a constant $M>0$ such that, for any $\sigma\in[0,1]$, any solution $u\in C^{2,\alpha}(\overline\Om)$ to the problem
	\begin{equation}\label{sigmadir}
		\begin{cases}\divv(\pi_\eps^h(\nabla u+\sigma F))+\eta\divv\left(\frac{\nabla u+\sigma F}{\sqrt{1+|\nabla u+\sigma F|^2}}\right)=\sigma H&\text{ in  }\Om\\
			u=\sigma\varphi &\text{ in } \partial\Om
		\end{cases}
	\end{equation}
	satisfies
	\begin{equation*}
		\|u\|_{C^{1}(\overline\Om)}\leq M,
	\end{equation*}
	then
	\begin{equation}\label{limiteq}
		\begin{cases}\divv(\pi_\eps^h(\nabla u+F))+\eta\divv\left(\frac{\nabla u+F}{\sqrt{1+|\nabla u+F|^2}}\right)= H&\text{ in  }\Om\\
			u=\varphi &\text{ in } \partial\Om
		\end{cases}
	\end{equation}
	admits a solution in $C^{2,\alpha}(\overline\Om)$.
\end{proposition}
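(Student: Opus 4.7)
My plan is to present the proposition as a direct application of the Leray--Schauder continuity method in the form of \cite[Theorem 13.8]{GT}, whose hypotheses need only to be checked one by one for the family \eqref{sigmadir}. The structural work has essentially already been done in the preceding pages: we have written \eqref{findiv24} in quasi-linear form with coefficients $A_{i,j}^{\eps,\eta}$ and $B^{\eps,\eta}$, and we have shown in \eqref{actuallyelliptic} that, for fixed $\eta>0$, the principal part is (non-uniformly) elliptic with ellipticity constant depending only on $|p+F|$. Thus, once we have an a priori $C^{1}$-bound $\|u\|_{C^{1}(\overline\Om)}\leq M$ valid along the whole homotopy, the ellipticity becomes uniform on the relevant compact set of gradients, and all further estimates reduce to the classical theory.

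The steps I would carry out, in order, are as follows. First, I would verify that each problem in the family \eqref{sigmadir} has exactly the structure required by \cite[Theorem 13.8]{GT}: the principal coefficients $A_{i,j}^{\eps,\eta}(z,p;\sigma F)$ and the lower order term $B^{\eps,\eta}(z,p;\sigma F)$ depend on $\sigma$ continuously in the $C^{\alpha}(\overline\Om\times\rr^{2n})$ topology, as assumed in the statement; this is where one uses the hypothesis $K_0\in C^{2,\alpha}_+$ (which gives $\pi,\,G\in C^{1,\alpha}$ on $\rr^{2n}\setminus\{0\}$, and $G\in C^{1,\alpha}(\rr^{2n})$ after the $2$-homogeneous extension of Proposition \ref{prop:pieps2}), the assumption $F\in C^{1,\alpha}(\overline\Om,\rr^{2n})$ and $\varphi\in C^{2,\alpha}(\overline\Om)$. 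At $\sigma=0$ the problem admits the trivial solution $u\equiv 0$, providing the base point for the continuity argument.

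Second, granted the uniform $C^{1}$-bound $M$, I would upgrade it to a uniform $C^{1,\beta}$-bound for some $\beta\in(0,\alpha]$. On the set $\{|p+F|\leq M+\|F\|_{L^\infty}\}$ the operator is uniformly elliptic with $C^{\alpha}$ coefficients, so interior and boundary De Giorgi--Nash--Moser / Ladyzhenskaya--Ural'tseva estimates for the gradient (as recalled in \cite[Chapters 13--14]{GT}) give an estimate $\|u\|_{C^{1,\beta}(\overline\Om)}\leq M'$ depending only on $M$, on $\eps,\eta,\Om,K_0,F,\varphi,H$. Classical Schauder theory \cite[Theorem 6.6]{GT} applied to the linearization then yields $\|u\|_{C^{2,\alpha}(\overline\Om)}\leq M''$, again uniformly in $\sigma\in[0,1]$.

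Finally, I would invoke \cite[Theorem 13.8]{GT} (Leray--Schauder fixed point argument): the existence of $\sigma$-uniform $C^{1,\beta}$ bounds, together with the continuous dependence of the coefficients on $\sigma$ and the solvability at $\sigma=0$, guarantees a $C^{2,\alpha}(\overline\Om)$ solution at $\sigma=1$, which is precisely a solution to \eqref{limiteq}. The only non-routine ingredient is the $C^{1}$ a priori estimate itself, which is the whole content of Section \ref{sc:aprioriestimate} and is therefore \emph{assumed} in the statement of this proposition; the rest is a bookkeeping verification of the hypotheses of \cite[Theorem 13.8]{GT}. The main subtlety worth flagging is that the ellipticity in \eqref{actuallyelliptic} degenerates as $|p+F|\to\infty$, but the $C^{1}$ bound confines $p+F$ to a compact set, so this degeneracy never enters the fixed-point argument once $\eta>0$ is fixed.
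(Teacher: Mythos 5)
Your proposal is correct and follows the same route the paper takes: the authors simply state that Proposition~\ref{gt} is ``a direct consequence of [Theorem 13.8]{GT} and subsequent remarks,'' and you have filled in the bookkeeping, in particular the standard upgrade from a uniform $C^1(\overline\Om)$ bound to a uniform $C^{1,\beta}(\overline\Om)$ bound (which is precisely the content of the ``subsequent remarks'' in GT for equations in divergence form), after which Theorem~13.8 applies verbatim.
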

\begin{remark}
Notice that the constant $M>0$ in Proposition \ref{prop:Fex} depends \emph{a priori}  on $\eps,\eta \in (0,1)$ and may blow up as $\eps,\eta \to 0$. However, in the sequel (cf. Propositions \ref{th:Cinfestimate}, \ref{prop:maxprinc} and  \ref{prop:boundgradest}) we will show that the estimates for the $C^1$ norm of solutions to
\eqref{sigmadir} can be made uniform in $\eps\in(0,1)$ and $\eta\in(0,\eta_0)$ for a sufficiently small constant $\eta_0\in (0,1)$. That would provide a constant $M>0$ 
 \emph{a posteriori} independent of $\eps$ and $\eta$, thus allowing to pass to the limit as $\eps,\eta \to 0$ (see Theorem \ref{th:main}).
\end{remark}
We shall need also the following weak maximum principle stated in \cite[Theorem 8.1]{GT}.
\begin{theorem}\label{8.1gt}
Let $\Om\subseteq\rr^d$ be a bounded domain. Let $L$ be the uniformly elliptic linear operator
\begin{equation*}
    \begin{split}
Lw&=\divv(a_{i,j}D_jw)+c_iD_i w
    \end{split}
\end{equation*}
where the coefficients $a_{i,j}$ and $c_i$
are bounded measurable functions on $\Om$. Let $w\in W^{1,2}(\Om)$ satisfy $Lw\geq 0$ in $\Om$ in distributional sense. Then
\[
\sup_{\Om} w\leq \sup_{\ptl \Om}w^+,
\]
where the value of $w^+=\max \{0,w\}$ in $\ptl\Om$ is understood in the sense of traces.
\end{theorem}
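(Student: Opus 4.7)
Since this is the classical weak maximum principle for divergence-form operators with bounded coefficients and no zero-order term, my plan is to follow the Stampacchia test-function argument used in the proof of \cite[Theorem 8.1]{GT}. First I would set $M := \sup_{\ptl\Om} w^+$, which we may assume finite. For each $k \geq M$ the truncation $v_k := (w-k)^+$ lies in $W_0^{1,2}(\Om)$, because the trace of $w - k$ on $\ptl\Om$ is non-positive; since $v_k \geq 0$, it is admissible in the distributional inequality $Lw \geq 0$.

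Testing $Lw \geq 0$ against $v_k$ yields
\[
\int_\Om a_{i,j} D_j w\, D_i v_k\, dx \leq \int_\Om c_i v_k\, D_i w\, dx,
\]
and the pointwise identity $Dv_k = Dw\, \chi_{\{w > k\}}$ replaces $Dw$ by $Dv_k$ in both integrals. Uniform ellipticity bounds the left-hand side below by $\lambda \|Dv_k\|_{L^2}^2$, while Cauchy--Schwarz and the $L^\infty$ bound on $c_i$ bound the right-hand side above by $\|c\|_\infty \|v_k\|_{L^2} \|Dv_k\|_{L^2}$, yielding the gradient estimate $\|Dv_k\|_{L^2} \leq C_0 \|v_k\|_{L^2}$ with $C_0 = \|c\|_\infty/\lambda$. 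Since $v_k$ vanishes outside $A(k) := \{w > k\}$, combining the Sobolev embedding $W_0^{1,2} \hookrightarrow L^{2d/(d-2)}$ (with the obvious adjustment in dimensions $d \leq 2$) and H\"older's inequality restricted to $A(k)$ gives
\[
\|v_k\|_{L^2} \leq |A(k)|^{1/d} \|v_k\|_{L^{2d/(d-2)}} \leq C_s |A(k)|^{1/d} \|Dv_k\|_{L^2},
\]
so either $v_k \equiv 0$ (whence $w \leq k$ a.e.) or $|A(k)| \geq c_0 := (C_0 C_s)^{-d} > 0$.

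To upgrade this dichotomy to the sharp conclusion $w \leq M$ a.e., I would run the Stampacchia iteration: for $h > k \geq M$ the inclusion $A(h) \subseteq A(k)$ combined with the chain of inequalities above yields a recursive measure estimate of the form $(h-k)^{\alpha} |A(h)| \leq C |A(k)|^{\beta}$ with $\beta > 1$, to which the classical Stampacchia iteration lemma applies and forces $|A(k)| = 0$ for every $k > M$; letting $k \downarrow M$ gives $w \leq M$ a.e.\ in $\Om$, which is exactly the claim.

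The main technical hurdle is precisely this Stampacchia step: the test-function inequality by itself only produces a qualitative lower bound on the measure of the superlevel sets, and turning this into the sharp estimate (with no additive structural constant on the right-hand side) requires the recursive measure inequality and the iteration lemma. All remaining steps---choice of the truncation, application of ellipticity, Cauchy--Schwarz and Sobolev---are routine once the trace properties of $w$ are used to place $v_k$ in $W_0^{1,2}(\Om)$.
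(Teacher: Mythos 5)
The paper does not prove this statement at all: it is quoted verbatim as \cite[Theorem 8.1]{GT} and used as a black box, so there is no internal proof to compare against. Your write-up is therefore an attempt at reconstructing the Gilbarg--Trudinger argument, and the first half of it is on the right track --- the choice of $v_k=(w-k)^+$ for $k\ge M$, the test-function inequality, the reduction of both integrals to the set $\{w>k\}$, the ellipticity bound $\|Dv_k\|_{L^2}\le C_0\|v_k\|_{L^2}$, and the Sobolev/H\"older dichotomy ``either $v_k\equiv 0$ or $|A(k)|\ge c_0$'' are all correct and are exactly the steps in GT.

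The gap is in the final Stampacchia step, which as stated does not work. The inequality you derived, $\|Dv_k\|_{L^2}\le C_0\|v_k\|_{L^2}$, is \emph{homogeneous} in $v_k$: there is no source term, so Sobolev and H\"older produce only the fixed-point dichotomy $1\le C_0C_s|A(k)|^{1/d}$ (unless $v_k\equiv 0$). To set up the Stampacchia recursion $(h-k)^\alpha|A(h)|\le C|A(k)|^\beta$ with $\beta>1$ you would need an \emph{upper} bound on $\|v_k\|_{L^2}^2$ of the form $C|A(k)|^\beta$ with a constant $C$ independent of $k$ and of $\|v_k\|$ itself; your chain gives no such bound, because the factor $\|v_k\|_{L^2}$ (or $\|Dv_k\|_{L^2}$) cannot be eliminated from the right-hand side. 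Even if you insert an a priori $L^\infty$ bound on $w$ to manufacture a recursion, Stampacchia's iteration lemma only yields $w\le M+d$ for some $d>0$ determined by $|A(M)|$; and since you have just shown $|A(k)|\ge c_0>0$ for all $k$ below $\sup_\Om w$, this $d$ does not shrink to zero --- the iteration cannot close to the sharp bound $w\le M$.

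What Gilbarg--Trudinger actually do after the dichotomy is a completely different (and essentially measure-theoretic) closing argument: assume for contradiction that $l:=\sup_\Om w> M$; then for every $k\in[M,l)$ one has $v_k\not\equiv 0$, so the nested family of sets $E(k):=\{w>k\}\cap\{Dw\ne 0\}$ all satisfy $|E(k)|\ge c_0>0$; letting $k\uparrow l$, the intersection $\bigcap_{k<l}E(k)$ has measure at least $c_0$, but it is contained (up to a null set) in $\{w=l\}\cap\{Dw\ne 0\}$, which has measure zero because $Dw=0$ a.e.\ on every level set of a Sobolev function. This contradiction gives $\sup_\Om w\le M$ exactly. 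That level-set observation is the genuinely non-routine ingredient, and it is precisely what is missing from your proposal; replacing the advertised Stampacchia iteration by this argument repairs the proof.
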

 First of all we need to guarantee the requested regularity for the coefficients of the equation. 
\begin{lemma}
\label{lm:holder}
 Let $K_0$ be a convex body in $C^{2,\alpha}_+$ with $0\in\intt{K_0}$.
 Let $F\in C^{1,\alpha}(\overline\Omega,\rr^{2n})$. Then there exists $0<\beta<1$ such that 
 $A_{i,j}^{\eps,\eta}(\cdot,\cdot;\sigma F),B^{\eps,\eta}(\cdot,\cdot;\sigma F)\in C^{\beta}(\overline\Om\times\rr^{2n})$ for any $\sigma\in[0,1]$. Moreover, the maps \begin{equation*}
	    \sigma\mapsto A_{i,j}^{\eps,\eta}(\cdot,\cdot;\sigma F),\quad\sigma\mapsto B^{\eps,\eta}(\cdot,\cdot;\sigma F)
	\end{equation*}
	are continuous as maps from $[0,1]$ to $C^{\beta}(\overline\Om\times\rr^{2n})$.
 \end{lemma}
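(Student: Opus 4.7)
The plan is to reduce the lemma to the H\"older regularity of a few scalar building blocks in the variable $q = p + \sigma F(z)$, using the regularity of $G = \pi \|\cdot\|_{*}^2$ inherited from $K_0$. Since $K_0 \in C^{2,\alpha}_+$, the discussion at the end of Section~\ref{sc:preliminaries} gives $\|\cdot\|_{*}\in C^{2,\alpha}(\rr^{2n}\setminus\{0\})$ and $\pi\in C^{1,\alpha}(\rr^{2n}\setminus\{0\})$, so $G\in C^{1,\alpha}(\rr^{2n}\setminus\{0\})$. By the $2$-homogeneity of $G$ (and the $1$-homogeneity of its partial derivatives), $G$ extends to a $C^1$ map on $\rr^{2n}$ with $G(0)=0$ and $DG(0)=0$, and the components $D_j G_i$ are locally $\alpha$-H\"older and $1$-homogeneous.

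Setting $q = p + \sigma F(z)$, each summand of $A_{i,j}^{\eps,\eta}$ in \eqref{coefficientsaaaaaaaa} takes one of the forms
\[
\frac{D_jG_i(q)}{(\eps^3+\|q\|_{*}^3)^{2/3}},\qquad \frac{G_i(q)G_j(q)}{(\eps^3+\|q\|_{*}^3)^{5/3}},\qquad \frac{\eta\,\delta_{ij}}{\sqrt{1+|q|^2}},\qquad \frac{\eta\, q_i q_j}{(1+|q|^2)^{3/2}}.
\]
The denominators in the first two are bounded below by a positive constant depending only on $\eps$, so the regularity is driven by the numerators: the first is $\alpha$-H\"older by the previous step, while the second is $C^1$ on $\rr^{2n}$ as a product of $C^1$ maps vanishing at the origin. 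A direct computation shows that each such function of $q$ is bounded on $\rr^{2n}$, with derivative bounded wherever defined and decaying at infinity, so each belongs to $C^{0,\beta}(\rr^{2n})$ for $\beta = \alpha$. The same scheme covers $B^{\eps,\eta}$, where the extra multiplicative factors involving $F$ and $DF$ are $C^{0,\alpha}$ on $\overline\Om$. Composing with the $C^{1,\alpha}$ map $(z,p)\mapsto p+\sigma F(z)$ then yields the required $C^\beta$ regularity of the coefficients on $\overline\Om\times\rr^{2n}$. Continuity of $\sigma \mapsto A_{i,j}^{\eps,\eta}(\cdot,\cdot;\sigma F)$ and $\sigma \mapsto B^{\eps,\eta}(\cdot,\cdot;\sigma F)$ in this topology follows from the same estimates applied to the differences of coefficients at distinct values of $\sigma$, using that $\sigma \mapsto \sigma F$ is linear, and hence continuous, as a map from $[0,1]$ to $C^{1,\alpha}(\overline\Om)$.

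The principal technical point is to upgrade the local $C^{0,\alpha}$ estimates for the $q$-building blocks to genuinely global ones on $\rr^{2n}$. Two regions deserve attention: near the singular point $q=0$, where the numerators $D_j G_i$ and $G_i G_j$ are only H\"older rather than smooth, but vanish at rates matched to the lower bound on the $\eps$-regularized denominators; and near infinity, where the quotients are bounded but the H\"older constant might in principle blow up. Both issues are resolved by exploiting the precise homogeneity of $G$ to convert the $C^{0,\alpha}$ information on the unit sphere into global H\"older control on every annulus $\{c\le|q|\le R\}$, after which the uniform boundedness and the decay at infinity of each term close the argument for large $|q|$.
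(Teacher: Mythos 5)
Your proposal is correct and essentially reproduces the paper's argument: both proofs reduce to the local $\alpha$-H\"older regularity of $D_j G_i$ coming from the $C^{2,\alpha}_+$ regularity of $K_0$ together with the $1$-homogeneity of $DG$, which is precisely the paper's computation showing $|D_j G_i(p)|/|p|^\alpha \to 0$ as $p\to 0$. You are a bit more explicit than the paper about the global H\"older control at infinity, which the paper compresses into ``the conclusion follows.'' One intermediate phrase in your write-up (``with derivative bounded wherever defined, so each belongs to $C^{0,\beta}$'') overshoots, since $D^2 G$ need not exist when $K_0\in C^{2,\alpha}_+$ and so that derivative is not available for the first quotient, but your final paragraph correctly replaces this heuristic with the homogeneity-based H\"older estimates on annuli, which is the right justification.
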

\begin{proof}
The second statement easily follows from the definition of the coefficients. Let us prove the first statement.
It is clear, thanks to our assumptions on $K_0$ and $F$, that $A_{i,j}^{\eps,\eta}(\cdot,\cdot,\sigma F)$ and $B^{\eps,\eta}(\cdot,\cdot,\sigma F)$ belong to $C^{0}(\overline\Om\times\rr^{2n})$ for any $\sigma\in [0,1]$. Moreover, in view of \eqref{DG}, $D_jG_i$ is $C^{\alpha}(\rr^{2n} \setminus {0})$ for any $i,j=1,\ldots, 2n$, since $\partial K_0$ is $C^{2,\alpha}$. Finally, we get 
\[
\lim_{p \to 0} \dfrac{|D_j G_i|(p)}{|p|^{\alpha}}=0.
\]
Indeed, we have
\begin{align*}
\dfrac{|D_j G_i|(p)}{|p|^{\alpha}}&=2\dfrac{ \|p\|_{*}}{|p|^{\alpha}} |\pi_{j}(p)\pi_{i}(p)+ \|p\|_{*}^2 D_i \pi_{j}(p)|\\
&\le 2 \dfrac{\|p\|_{*}^{\alpha}}{|p|^{\alpha}} \|p\|_{*}^{1-\alpha} \left(\left|\pi_{j}(p)\pi_{i}(p)\right|+ \|p\|_{*} |D_i \pi_{j}(p)|\right) \\
&\le C\|p\|_{*}^{1-\alpha}  \to 0
\end{align*}
as $p \to 0$, since $\tfrac{\|p\|_{*}^{\alpha}}{|p|^{\alpha}}$ is bounded and the last factor in the previous inequality is $0$-homogeneous, thus in particular bounded. Then $D_j G_i$ belongs to $C^{\alpha}(\rr^{2n})$. Since $A^{\eps,\eta}_{i,j}$ and $B^{\eps,\eta}$ are obtained as composition, sum and product of H\"older functions, the conclusion follows.
\end{proof}
Therefore we are in position to apply Proposition \ref{gt}.
First of all we want to obtain estimates for the $C^0$ norm of solutions to \eqref{sigmadir}. In order to do this, inspired by \cite{MR336532}, we assume that there exists $\delta=\delta(K_0,\Om,H)\in (0,1]$ such that
\begin{equation}\label{in:hip}
\left|\int_\Om Hv dz\right|\leq (1-\delta)\int_{\Om}\|\nabla v\|_{*}dz 
\end{equation}
for any non-negative function $v\in C_c^\infty(\Omega)$. To justify this assumption, assume that we have a function $u\in C^2(\Omega)$ which solves \eqref{riemreg}.
Then, multiplying \eqref{riemreg} by a test function $v\in C^\infty_c(\Om)$, integrating over $\Om$ and letting $\eta\to 0$, by Proposition \ref{prop:pieps2} we get that
\begin{equation}\label{in:hip+}
\begin{split}
    \left|\int_\Om Hv\,dz\right|&\leq\left|\int_\Om v \divv(\pi^h_{\eps}(\nabla u +F))\,dz\right|+\eta\left|\int_\Om v\divv\Big(\frac{\nabla u +\sigma F}{\sqrt{1+|\nabla u + \sigma F|^2}}\Big)\,dz\right|\\
    &\leq\int_\Om  |\escpr{\pi^h_{\eps}(\nabla u +F),\nabla v}\mathcal|\, dz+\eta\int_\Om\left|\Big\langle\nabla v,\frac{\nabla u+\sigma F}{\sqrt{1+|\nabla u+\sigma F|^2}}\Big\rangle\right|\,dz\\
    &\leq \int_{\Om}  \|\nabla v\|_{*}dz+\eta\int_\Om|\nabla v|\,dz\\
    &\to \int_{\Om}  \|\nabla v\|_{*}dz.
\end{split}
\end{equation}
Notice that, as already pointed out in the introduction, \eqref{in:hip} is slightly stronger than \eqref{in:hip+}. 
 We begin by proving a technical lemma.
\begin{lemma}\label{tl}
Let $\sigma\in [0,1]$ and $\eps\in (0,1)$. Then
\begin{equation}\label{tecnical}
    \langle p,\pi_\eps^h(p+\sigma F)\rangle\geq \|p\|_\ast-1-\| F\|_*-\|-F\|_*
\end{equation}
for any $p\in\rr^{2n}$ and $z\in\overline\Om$.
\end{lemma}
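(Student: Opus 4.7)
The plan is to split $\langle p,\pi_\eps^h(p+\sigma F)\rangle$ by writing $p=(p+\sigma F)-\sigma F$, so that
\[
\langle p,\pi_\eps^h(p+\sigma F)\rangle=\langle p+\sigma F,\pi_\eps^h(p+\sigma F)\rangle-\sigma\langle F,\pi_\eps^h(p+\sigma F)\rangle,
\]
and then bound the two summands separately. I would first dispose of the degenerate case $p+\sigma F=0$ using the continuous extension $\pi_\eps^h(0)=0$ from Proposition \ref{prop:pieps2}(ii): there $\langle p,\pi_\eps^h(p+\sigma F)\rangle=0$ and $p=-\sigma F$ forces $\|p\|_*\leq\|-F\|_*$, so the claimed inequality reduces to $\|p\|_*\leq 1+\|F\|_*+\|-F\|_*$, which is trivial.

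From now on assume $p+\sigma F\neq 0$. For the first summand, Proposition \ref{prop:pieps2}(i) combined with the defining identity $\langle q,\pi(q)\rangle=\|q\|_*$ gives
\[
\langle p+\sigma F,\pi_\eps^h(p+\sigma F)\rangle=\frac{\|p+\sigma F\|_*^{3}}{(\eps^{3}+\|p+\sigma F\|_*^{3})^{2/3}}.
\]
The crucial analytic fact is the scalar inequality
\[
a-\frac{a^{3}}{(\eps^{3}+a^{3})^{2/3}}\leq\eps \qquad \forall\, a\geq 0,\ \eps\in(0,1).
\]
I would establish this by invoking the subadditivity of $t\mapsto t^{2/3}$ on $[0,\infty)$, which yields $(\eps^{3}+a^{3})^{2/3}\leq\eps^{2}+a^{2}$ and hence
\[
a-\frac{a^{3}}{(\eps^{3}+a^{3})^{2/3}}=\frac{a\bigl[(\eps^{3}+a^{3})^{2/3}-a^{2}\bigr]}{(\eps^{3}+a^{3})^{2/3}}\leq\frac{a\eps^{2}}{(\eps^{3}+a^{3})^{2/3}},
\]
and then distinguishing $a\leq\eps$ and $a>\eps$, using the elementary bound $(\eps^{3}+a^{3})^{2/3}\geq\max(\eps^{2},a^{2})$ to conclude in both cases.

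For the second summand, the identity in Proposition \ref{prop:pieps2}(i) together with $\|\pi(\cdot)\|_{K_0}=1$ shows that $\|\pi_\eps^h(p+\sigma F)\|_{K_0}\leq 1$, so $\pi_\eps^h(p+\sigma F)\in K_0$ and the very definition of the dual norm forces $\langle F,\pi_\eps^h(p+\sigma F)\rangle\leq\|F\|_*$, whence $-\sigma\langle F,\pi_\eps^h(p+\sigma F)\rangle\geq -\sigma\|F\|_*\geq -\|F\|_*$. Finally, the asymmetric triangle inequality $\|p\|_*\leq\|p+\sigma F\|_*+\sigma\|-F\|_*$ yields $\|p+\sigma F\|_*\geq\|p\|_*-\|-F\|_*$, and chaining all the estimates gives exactly
\[
\langle p,\pi_\eps^h(p+\sigma F)\rangle\geq\|p\|_*-1-\|F\|_*-\|-F\|_*.
\]

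The only genuine obstacle is the one-variable inequality in the second paragraph: everything else is bookkeeping with the duality of $\|\cdot\|_*$ and the (asymmetric) triangle inequality. The bound on the scalar quantity even provides a stronger $\eps$ (rather than $1$) on the right-hand side, which will be useful for passing to the limit $\eps\to 0$ later on, although the weaker form stated here is all that is needed.
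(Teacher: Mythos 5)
Your proof is correct and follows essentially the same route as the paper's: the same decomposition $p=(p+\sigma F)-\sigma F$, the same evaluation of $\langle p+\sigma F,\pi_\eps^h(p+\sigma F)\rangle$ via Proposition \ref{prop:pieps2}(i), and the same bound $\|\pi_\eps^h(\cdot)\|_{K_0}\le 1$ to control the $F$-term. The only difference is in proving the scalar inequality, where the paper reduces $\frac{a^3}{(\eps^3+a^3)^{2/3}}\geq a-1$ to $\eps^3+a^3\geq\eps^3a^{3/2}$ via superadditivity of $t\mapsto t^{3/2}$, while your chain through $(\eps^3+a^3)^{2/3}\leq\eps^2+a^2$ is slightly cleaner and even gives the sharper constant $\eps$ in place of $1$.
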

\begin{proof}
Let us fix $z\in\overline\Om$ and $p\in\rr^{2n}$. If $p=0$ or $p+\sigma F=0$, then the assertion is trivial. Therefore, assume $p, p+\sigma F\neq 0$.
It is clear, recalling Proposition \ref{prop:pieps2} and using the Cauchy -Schwarz formula \eqref{in:C-S},  that 
\begin{equation*}\label{in:0.2}
\begin{split}
  \escpr{p,\pi_\eps^h(p+\sigma F)}&=\escpr{p+\sigma F,\pi_\eps^h(p+\sigma F)}-\escpr{\sigma F,\pi_\eps^h(p+\sigma F)}\\
  &\geq \frac{\|p+\sigma F\|_*^3}{(\eps^3+\|p+\sigma F\|_*^3)^\frac{2}{3}}-\left(\frac{\|p+\sigma F\|_*^3}{\eps^3+\|p+\sigma F\|_*^3}\right)^\frac{2}{3}\|\sigma F\|_{*}\\
  &\geq \frac{\|p+\sigma F\|_*^3}{(\eps^3+\|p+\sigma F\|_*^3)^\frac{2}{3}}-\| F\|_{*}.
\end{split}
\end{equation*}
Hence, noticing that
$$\|p+\sigma F\|_*\geq\|p\|_*-\|-\sigma F\|_*\geq \|p\|_*-\|- F\|_*$$
by the triangle inequality, it suffices to prove that
 \begin{equation}
 \label{eq:nvfU}
 \frac{\|p+\sigma F\|_*^3}{(\eps^3+\|p+\sigma F\|_*^3)^\frac{2}{3}}\geq \|p+\sigma F\|_{*}-1.
 \end{equation}
 When $\|p+\sigma F\|_{*}\leq 1$ \eqref{eq:nvfU} is trivial. Therefore let us assume $\|p+\sigma F\|_{*}>1$. 
Notice that \eqref{eq:nvfU} is equivalent to
 \begin{equation*}
  \|p+\sigma F\|_{*}^{\frac{9}{2}}\geq (\|p+\sigma F\|_{*}-1)^{\frac{3}{2}} (\varepsilon^3+\|p+\sigma F\|_{*}^3).
 \end{equation*}
Since $a^p-b^p\geq (a-b)^p$ when $ 0<b<a$ and $p>1$, it is enough to check that
\begin{equation*}\label{in:alt1}
    \begin{split}
        \|p+\sigma F\|_{*}^{9/2}&\geq (\|p+\sigma F\|_{*}^{3/2}-1) (\varepsilon^3+\|p+\sigma F\|_{*}^3)\\
  &=\varepsilon^3\|p+\sigma F\|_{*}^{3/2}+\|p+\sigma F\|_{*}^{9/2}-\varepsilon^3-\|p+\sigma F\|_{*}^3,
    \end{split}
\end{equation*}
which is clearly true since $\|p+\sigma F\|_{*}>1$ and $\varepsilon<1$.
\end{proof}
\begin{proposition}
\label{th:Cinfestimate}
     Let $\alpha\in (0,1)$ and $K_0$ be a convex body in $C^{2,\alpha}_+$ with $0\in\intt{K_0}$. Let $\Om\subseteq \rr^{2n}$ be a bounded open set, $\varphi\in C^2(\overline\Om)$, $H\in L^\infty(\Om)$ and $F\in C^0(\overline{\Omega},\rr^{2n})$. If condition \eqref{in:hip} is satisfied then there exist a constant $\eta_0=\eta_0(K_0,\delta)\in(0,1)$ and a constant $C_1=C_1(n,K_0,\Om,\varphi,F,\delta)>0$, independent of $\sigma\in[0,1]$, $\eps\in (0,1)$ and  $\eta\in (0,\eta_0)$, such that, for any solution  $u\in C^2(\overline\Omega)$ to \eqref{sigmadir} with $\eta\in(0,\eta_0)$ it holds that
\[
\|u\|_{L^{\infty}(\Om)}\leq C_1.
\]
\end{proposition}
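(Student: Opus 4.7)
The strategy is a Stampacchia-type iteration on super- and sub-level sets of $u$, in the spirit of the classical $L^{\infty}$ estimates for mean curvature equations (cf.~\cite[Theorem 10.9]{GT}). For the upper bound I would fix $k\geq \sigma\max_{\partial\Om}\varphi$, test \eqref{sigmadir} against the admissible non-negative truncation $v:=(u-k)^{+}\in W^{1,2}_{0}(\Om)$, and integrate by parts to obtain
\[
\int_{\{u>k\}}\langle \pi_{\eps}^{h}(\nabla u+\sigma F),\nabla u\rangle\,dz + \eta\int_{\{u>k\}}\Big\langle \frac{\nabla u+\sigma F}{\sqrt{1+|\nabla u+\sigma F|^{2}}},\nabla u\Big\rangle\,dz = -\sigma\int_{\Om} Hv\,dz.
\]
Lemma \ref{tl} supplies the crucial lower bound $\langle \pi_{\eps}^{h}(\nabla u+\sigma F),\nabla u\rangle \geq \|\nabla u\|_{*}-M_{F}$, with $M_{F}:=1+\sup_{\overline\Om}(\|F\|_{*}+\|-F\|_{*})$, independently of $\eps$. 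The Riemannian correction term is bounded in modulus by $c^{-1}\|\nabla u\|_{*}$ via Cauchy--Schwarz and the norm equivalence \eqref{in:norms}, while the right-hand side is controlled by \eqref{in:hip} as $(1-\delta)\int_{\{u>k\}}\|\nabla u\|_{*}\,dz$. Fixing $\eta_{0}\in(0,1)$ so that $\eta_{0}c^{-1}\leq\delta/2$ absorbs the regularization term, producing
\[
\frac{\delta}{2}\int_{\{u>k\}}\|\nabla u\|_{*}\,dz \leq M_{F}\,|\{u>k\}|, \qquad \eta\in(0,\eta_{0}).
\]

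Next I would apply the Sobolev embedding $W^{1,1}_{0}(\Om)\hookrightarrow L^{2n/(2n-1)}(\Om)$ to $v$ (after converting $\|\cdot\|_{*}$ to $|\cdot|$ via \eqref{in:norms}) combined with H\"older's inequality to obtain, for every $h>k$,
\[
(h-k)|\{u>h\}| \leq \int_{\{u>k\}}(u-k)\,dz \leq C(n,K_{0})\,|\{u>k\}|^{\frac{1}{2n}}\int_{\{u>k\}}\|\nabla u\|_{*}\,dz.
\]
Coupling with the previous inequality yields the Stampacchia-type decay $|\{u>h\}|\leq \frac{C}{h-k}|\{u>k\}|^{1+\frac{1}{2n}}$, from which the standard Stampacchia iteration lemma produces $\sup_{\Om}u \leq \sigma\max_{\partial\Om}\varphi + C_{1}'$ with $C_{1}'$ depending only on $n,K_{0},\Om,\varphi,F,\delta$, uniformly in $\sigma\in[0,1]$, $\eps\in(0,1)$, $\eta\in(0,\eta_{0})$. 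The lower bound $\inf_{\Om}u\geq \sigma\min_{\partial\Om}\varphi - C_{1}''$ is obtained analogously by testing with $v:=(k-u)^{+}$ for $k\leq\sigma\min_{\partial\Om}\varphi$.

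The main technical obstacles I anticipate are twofold. First, the coercivity constant in Lemma \ref{tl} must be genuinely independent of $\eps$ so that the Stampacchia constants do not degenerate as $\eps,\eta\to 0$; this is precisely the reason for the specific shape of $K_{\eps}$ introduced in \eqref{eq:Keps}. Second, since $K_{0}$ is not assumed symmetric, the lower-bound argument uses \eqref{in:hip} with $\nabla v=-\nabla u$ on the support of $(k-u)^{+}$, thus producing $\int\|-\nabla u\|_{*}$ in place of $\int\|\nabla u\|_{*}$; by \eqref{in:norms} these two quantities are comparable up to a constant depending only on $K_{0}$, so the iteration still closes after possibly enlarging $C_{1}$ and shrinking $\eta_{0}$, implicitly absorbing a compatibility condition between $\delta$ and the asymmetry constants of $K_{0}$ into the dependence $\delta=\delta(K_{0},\Om,H)$ of the hypothesis.
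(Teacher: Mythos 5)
Your argument is sound at the level of the key estimate and takes a genuinely different route to the $L^\infty$ bound, so let me first compare the two approaches and then flag one point.

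The paper first derives a bound on $\int_\Om u^\pm\,dz$ by testing against $v=u^\pm$, using Lemma~\ref{tl} for coercivity, absorbing the $\eta$-term exactly as you do, and invoking the Poincar\'e inequality; it then appeals to \cite[Lemma 10.8]{GT} (and a mild variant thereof, left to the reader) to upgrade the $L^1$ bound to an $L^\infty$ bound. You instead collapse both steps into a single self-contained Stampacchia iteration on super-level sets $(u-k)^+$, closing it with the Sobolev embedding $W^{1,1}_0(\Om)\hookrightarrow L^{2n/(2n-1)}(\Om)$ and H\"older. The coercivity input from Lemma~\ref{tl}, the choice of $\eta_0$, and the passage from $v\in W^{1,1}_0$ to $v\in C^\infty_c$ when applying~\eqref{in:hip} are the same; what your route buys is that it does not rely on verifying the structural hypotheses of \cite[Lemma 10.8]{GT} in the slightly non-standard form (positive coefficient of $|p|$ in (10.23), plus the analogous statement for $\sup(-u)$) that the paper uses without proof.

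Your closing remark about asymmetry, however, deserves more care, and in fact exposes a subtlety that the paper also elides. For the lower bound one tests with a non-negative truncation whose gradient is $-\nabla u$ on its support, so that Lemma~\ref{tl} (applied with $p=\nabla u$) produces $\|\nabla u\|_*$ on the left, while \eqref{in:hip} applied to that truncation produces $(1-\delta)\int\|-\nabla u\|_*$ on the right. Writing $\Lambda_{K_0}:=\sup_{p\neq 0}\|-p\|_*/\|p\|_*\ (\geq 1)$, the coercivity coefficient one is left with is bounded below only by $1-(1-\delta)\Lambda_{K_0}-O(\eta)$. Shrinking $\eta_0$ or enlarging $C_1$, as you propose, cannot make the leading term positive: the iteration closes only under the genuine compatibility condition $(1-\delta)\Lambda_{K_0}<1$, which is automatic when $K_0=-K_0$ but is \emph{not} a consequence of \eqref{in:hip} for general strictly convex $K_0$. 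I would add that the paper's own treatment of this point is the phrase \emph{"in the same way we can achieve an estimate for $u^-$"}, yet its observation that ``$\nabla v$ is either $0$ or $\nabla u$'' is correct only for $v=u^+$; for $v=u^-$ one has $\nabla v\in\{0,-\nabla u\}$, and the pointwise inequality $\escpr{\nabla v,\pi_\eps^h(\nabla u+\sigma F)}\geq\|\nabla v\|_*-M_F$ actually fails for large $|\nabla u|$ once $K_0$ is not symmetric, since the left-hand side is then $\approx-\|\nabla u\|_*$. So you spotted a real issue, but the correct conclusion is that an explicit compatibility between $\delta$ and the asymmetry constant $\Lambda_{K_0}$ is needed (or \eqref{in:hip} must be required to hold with $\|{-}\nabla v\|_*$ in place of $\|\nabla v\|_*$), not that the iteration automatically closes after adjusting $\eta_0$ and $C_1$.
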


\begin{proof}
Let us notice that \eqref{tecnical}, the equivalence between $\|\cdot\|_*$ and the Euclidean norm and the boundedness of $F$ allow to find constants $a_0,a_2>0$, independent of $\sigma\in [0,1]$ and $\eps\in (0,1)$, such that
\begin{equation*}
    \langle p,\pi_\eps^h(p+\sigma F)\rangle\geq a_0|p|-a_2
\end{equation*}
for any $z\in\Om$ and $p\in\rr^{2n}$. This fact, together with the boundedness of $H$, suggests to rely on \cite[Lemma 10.8]{GT} to limit ourselves to estimate $\|u\|_{L^1(\Om)}$. Indeed, it is not difficult to show that \cite[Lemma 10.8]{GT} remains true when condition $(10.23)$ of \cite{GT} allows a positive coefficient multiplying $|p|$. Moreover, its proof can be easily adapted to achieve estimates from above of $\sup_\Om -u$ in terms of $\|u^-\|_{L^1(\Om)}$ for any solution of  $Qu=0$ where $Q$ is defined in  $(10.5)$ of  \cite{GT}. In the end it suffices to estimate $\|u^+\|_{{L^1(\Om})}$ and $\|u^-\|_{{L^1(\Om})}$.
We only estimate $\|u^+\|_{L^1(\Om)}$, being the other case analogous. Moreover, up to replacing $u$ by $u-\|\varphi\|_{\infty,\partial\Om}$, we can assume that $u\leq 0$ in  $\partial \Om$.
Let us set $v=u^+$. Then it is clear that $v\in W^{1,\infty}(\Om)\cap W^{1,1}_0(\Om)$, and moreover $\nabla v$ exists in the classical sense for almost every $z\in\Om$. Therefore, since $u$ is in particular a weak solution to
\begin{equation*}
\divv(\pi_\eps^h(\nabla u+\sigma F))+\eta\divv\left(\frac{\nabla u+\sigma F}{\sqrt{1+|\nabla u+\sigma F|^2}}\right)=\sigma H,
	\end{equation*}
it follows that
\begin{equation}\label{eq:alt}
\begin{split}
\int_\Om \escpr{\nabla v,\pi_\eps^h(\nabla u+\sigma F)}+\eta\left\langle\nabla v,\frac{\nabla u+\sigma F}{\sqrt{1+|\nabla u+\sigma F|^2}}\right\rangle\,dz= -\int_\Om  v \sigma Hdz.
\end{split}
\end{equation}
We claim that 
\begin{equation}\label{in:0}
\escpr{\nabla v,\pi_\eps^h(\nabla u+\sigma F)}\geq \|\nabla  v\|_{*}-1 -\| F\|_{*}-\|-F\|_*
\end{equation}
holds in any point where $\nabla v$ exists in the classical sense. Indeed, in such points $\nabla v$ is either $0$ or $\nabla u$. In the first case \eqref{in:0} is trivial, while in the second case it follows from Lemma \ref{tl}.
It is well known that, since $v\geq 0$ and $v\in W^{1,1}_0(\Om)$, there exists a sequence of non-negative functions $(v_k)_k\subseteq C^\infty_c(\Om)$ converging to $v$ strongly in $W^{1,1}_0(\Om)$.  Moreover, thanks to \eqref{in:hip} it holds that
\begin{equation*}
\left|\int_\Om Hv_k dz\right|\leq (1-\delta)\int_{\Om}\|\nabla v_k\|_{*}\, dz.
\end{equation*}
Hence, passing to the limit in the previous equation, and recalling that $\|\cdot\|_*$ is equivalent to the Euclidean norm, we conclude that \eqref{in:hip} holds for $v$.
Combining this information with \eqref{eq:alt} and \eqref{in:0} we get that
\begin{equation*}
\begin{split}
0&=\int_\Om -\escpr{\nabla v,\pi_\eps^h(\nabla u+\sigma F)}-\eta\left\langle\nabla v,\frac{\nabla u+\sigma F}{\sqrt{1+|\nabla u+\sigma F|^2}}\right\rangle\,dz- \int_\Om v \sigma H\, dz\\
&\leq\int_\Om -\|\nabla  v\|_*+ 1+\| F\|_{*}+\|-F\|_*+\eta|\nabla v|\,dz+\left|\int_\Om v H\, dz\right| \\
&\leq \int_{\Om} -\|\nabla  v\|_*+ 1+\| F\|_{*}+\|-F\|_* +C\eta\|\nabla v\|_*+(1-\delta)\|\nabla v\|_{*}\,  dz\\
&=\int_{\Om} 1+\|F\|_{*}+\|-F\|_{*}+(C\eta -\delta)\|\nabla v\|_{*}\, dz,
\end{split}
\end{equation*} where $C=C(K_0)$ is a positive constant as in \eqref{in:norms}. Hence, choosing $\eta_0\in(0,1)$ such that $\delta-C\eta_0>0$, we conclude that
\begin{equation*}\label{in:0.1}
(\delta-C\eta_0)\int_\Om \|\nabla v\|_{*}\, dz\leq (\delta-C\eta)\int_\Om \|\nabla v\|_{*}\, dz\leq \int_\Om 1+\|F\|_{*} +\|-F\|_{*}\, dz
\end{equation*}
for any $\eta\in (0,\eta_0)$.
Thanks to the Poincaré inequality and the equivalence between $\|\cdot\|_{*}$ and the Euclidean norm, we conclude that there exists a constant $c_1$, independent of $\sigma\in[0,1]$, $\varepsilon\in(0,1)$ and $\eta\in (0,\eta_0)$, such that
\begin{equation*}
    \int_\Om u^+\,dz\leq c_1.
\end{equation*}
Since in the same way we can achieve an estimate for $u^-$, the thesis follows.
\end{proof}
The next step is to achieve gradient estimates, again in the $C^0$ norm, for solutions to \eqref{sigmadir}. As customary in this framework, we want to reduce ourselves to boundary gradient estimates via a suitable maximum principle. To this aim, arguing as in \cite{MR2262784}, we need to assume the existence of scalar functions $f_1,\ldots,f_{2n}\in C^1(\overline\Om)$ such that
\begin{equation}\label{taiwan}
    D_k F_i=D_i f_k\quad \mbox{for any}\quad  i,k=1,\ldots,2n. 
\end{equation}
 We stress that interior gradient estimates usually depend on the bounds of the coefficients and the ellipticity nature of the equation (cf. e.g. \cite[Chapter 15]{GT}). Consequently, since by \eqref{actuallyelliptic} the ellipticity constant tends to vanish as $\eta\to 0$,
the right way to achieve estimates which are uniform in $\epsilon,\eta\in (0,1)$ is to rely on a suitable maximum principle argument. 
 Indeed, thanks to \eqref{taiwan}, the following maximum principle, which is the Finsler counterpart of \cite[Proposition 4.3]{MR2262784}, holds. 
\begin{proposition}
\label{prop:maxprinc}
 Let $K_0$ be a convex body in $C^{2,\alpha}_+$ for $0<\alpha<1$ with $0\in\intt{K_0}$. Let $\Om\subseteq\rr^{2n}$ be a bounded domain. Let $F \in C^1(\Omega,\rr^{2n})$ be such that \eqref{taiwan} holds. Let $H$ be a constant.
Let $u\in C^2(\overline\Om)$ be a solution to \eqref{sigmadir}. Then 
\begin{equation}
	\|\nabla u\|_{\infty,\Om}\leq\|\nabla u\|_{\infty,\partial\Om}+2\|f\|_{\infty,\Om},
\end{equation}
where $f=(f_1,\ldots,f_{2n})$ is as in \eqref{taiwan}.
\end{proposition}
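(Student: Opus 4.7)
The plan is to differentiate the equation \eqref{sigmadir} in each horizontal direction, observe that the condition \eqref{taiwan} makes the derivatives arrange themselves into a \emph{linear} elliptic equation for $w_k := D_k u + \sigma f_k$, and then deduce that $v := |\nabla u + \sigma f|^2 = \sum_{k=1}^{2n} w_k^2$ is a subsolution of the same operator. A weak maximum principle will then bound $v$ on $\Om$ by its boundary values, and the triangle inequality will upgrade this to the claimed estimate on $|\nabla u|$.

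More concretely, introduce the $C^1$ map
\[
V(p) := \pi_\eps^h(p) + \eta\,\frac{p}{\sqrt{1+|p|^2}},
\]
so that \eqref{sigmadir} reads $\divv V(\nabla u+\sigma F)=\sigma H$. Testing against $D_k\psi$ for $\psi\in C_c^\infty(\Om)$ and using that $\sigma H$ is \emph{constant} (so the right-hand side vanishes after this integration by parts), I would integrate by parts in the $x_k$-direction to obtain
\[
\int_\Om \bigl\langle \partial_{x_k}V(\nabla u+\sigma F),\nabla\psi\bigr\rangle\,dz=0.
\]
The crucial point is now the symmetry coming from \eqref{taiwan}: for every $i,j$ one has $D_k(u_{x_j}+\sigma F_j)=u_{x_j x_k}+\sigma D_i f_k|_{i=j\text{ swap}}=D_j w_k$, where $w_k=D_ku+\sigma f_k$. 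Hence
\[
\partial_{x_k}V_i(\nabla u+\sigma F)=\sum_{j} a_{ij}(z)\,D_j w_k,\qquad a_{ij}(z):=\partial_jV_i(\nabla u+\sigma F)(z),
\]
and $w_k$ is a weak solution of $L_0 w_k := \sum_{i,j} D_i(a_{ij} D_j w_k) = 0$. The matrix $(a_{ij})$ inherits positive definiteness from \eqref{actuallyelliptic}; since $u\in C^2(\overline\Om)$, the coefficients are continuous on $\overline\Om$ and the operator $L_0$ is uniformly elliptic.

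Next I would verify that $v=\sum_k w_k^2$ satisfies $L_0 v\ge 0$ in the distributional sense. Testing the equation $L_0w_k=0$ against the admissible function $w_k\psi$ with $\psi\in C_c^\infty(\Om)$, $\psi\ge 0$, and summing over $k$ yields
\[
\int_\Om \sum_{i,j}a_{ij}D_j v\,D_i\psi\,dz
=-2\sum_{k}\int_\Om \psi \sum_{i,j}a_{ij}D_j w_k\,D_i w_k\,dz\le 0,
\]
which is precisely the definition of $L_0v\ge 0$ weakly. Since $v\in C^1(\overline\Om)\subset W^{1,2}(\Om)$, Theorem \ref{8.1gt} applies and gives $\sup_\Om v\le \sup_{\partial\Om}v^+=\sup_{\partial\Om}v$, i.e.\ $\sup_\Om|\nabla u+\sigma f|\le\sup_{\partial\Om}|\nabla u+\sigma f|$. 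The triangle inequality, together with $\sigma\in[0,1]$, then yields
\[
\sup_\Om|\nabla u|\le\sup_\Om|\nabla u+\sigma f|+\sigma\|f\|_{\infty,\Om}\le\sup_{\partial\Om}|\nabla u|+2\sigma\|f\|_{\infty,\Om}\le\|\nabla u\|_{\infty,\partial\Om}+2\|f\|_{\infty,\Om}.
\]

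The main delicate points will be (i) justifying the second integration by parts needed to move $D_k$ out of $V(\nabla u+\sigma F)$ under the sole assumption $u\in C^2$ and $V\in C^1$, which is really what forces the divergence-form rewriting via the chain rule in $z$ rather than a formal differentiation of the PDE; and (ii) checking that $L_0$ genuinely has bounded, elliptic coefficients on $\overline\Om$ once an \emph{a priori} $C^1$-bound is in force, so that Theorem \ref{8.1gt} can be invoked. The fact that $H$ must be constant (so that the $x_k$-derivative of the right-hand side vanishes) enters exactly at the differentiation step, and cannot be relaxed by this argument.
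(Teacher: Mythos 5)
Your proof is correct and follows the same overall strategy as the paper's: differentiate (weakly) the equation in the direction $x_k$, invoke the commuting-vector-fields hypothesis \eqref{taiwan} to replace $D_k(D_j u+\sigma F_j)$ by $D_j(D_k u+\sigma f_k)$, identify $w_k := D_ku+\sigma f_k$ as a weak solution of a linear uniformly elliptic operator $L_0 = \divv(a_{ij}D_j\,\cdot\,)$ with $a_{ij}(z)=A^{\eps,\eta}_{i,j}(z,\nabla u(z);\sigma F(z))$, and apply the weak maximum principle of Theorem \ref{8.1gt}.

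The one place where you genuinely depart from the paper is the final maximum-principle step. The paper applies Theorem \ref{8.1gt} directly to each scalar component $w_k$ (and $-w_k$), giving $\sup_\Om |w_k|\le\sup_{\partial\Om}|w_k|$ for each $k$; the claimed inequality then follows when $\|\cdot\|_{\infty}$ is read as the componentwise sup-norm. You instead form $v=\sum_k w_k^2=|\nabla u+\sigma f|^2$, test the weak equation $L_0 w_k=0$ against the $C^1_c$ function $w_k\psi$, and sum in $k$ to show $v$ is a weak subsolution of $L_0$, after which Theorem \ref{8.1gt} gives the bound on $\sup_\Om|\nabla u+\sigma f|$ directly in the Euclidean norm. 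The two routes are equivalent up to dimensional constants, but yours has the small advantage of yielding the estimate directly for the Euclidean norm of the gradient without any componentwise-versus-Euclidean ambiguity, at the cost of the extra (standard) subsolution computation; note that both proofs need the same technical upgrade of the weak identity \eqref{mp2} to $C^1_c$ test functions, which the paper justifies by reference to \cite[Proposition 4.3]{MR2262784}.

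Two small points worth being precise about. First, the bounded-coefficients/uniform-ellipticity check is essential before invoking Theorem \ref{8.1gt}: since $u\in C^2(\overline\Om)$ and $F\in C^1$, the argument of $A^{\eps,\eta}_{i,j}$ stays in a compact set, so by \eqref{actuallyelliptic} the ellipticity constant of $L_0$ (which involves $\eta/(1+|\nabla u+\sigma F|^2)^{3/2}$) is bounded below by a positive constant depending on $\eps,\eta$ and the solution $u$ itself; this is perfectly acceptable here because Theorem \ref{8.1gt} gives a conclusion with no quantitative dependence on the ellipticity constant. Second, the constancy of $H$ enters exactly as you say: the term $\int_\Om \sigma H D_k\psi\,dz$ vanishes after integrating by parts, and this is the sole place where that hypothesis is used.
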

\begin{proof}
Fix $\sigma\in[0,1]$, $\eps\in (0,1)$ and $\eta\in(0,1)$. Let $v\in C^2_c(\Om)$ and fix $k\in\{1,\ldots,2n\}$. Then, multiplying \eqref{sigmadir} by $D_kv$, using Proposition \ref{prop:pieps2}, integrating over $\Om$, integrating by parts and exploiting the properties of $F$, it holds that
\begin{equation*}
	\begin{split}	0&=\int_{\Om}\left(\divv\left(\pi(\nabla u+\sigma F)\frac{\|\nabla u+\sigma F\|^2_{\ast}}{\left(\varepsilon^3+\|\nabla u+\sigma F\|_{\ast}^3\right)^\frac{2}{3}}+\eta \frac{\nabla u+\sigma F}{\sqrt{1+|\nabla u+\sigma F|^2}}\right)-\sigma H\right)D_kv\,dz\\
		&=\int_{\Om}\divv\left(\pi(\nabla u+\sigma F)\frac{\|\nabla u+\sigma F\|^2_{\ast}}{\left(\varepsilon^3+\|\nabla u+\sigma F\|_{\ast}^3\right)^\frac{2}{3}}+\eta \frac{\nabla u+\sigma F}{\sqrt{1+|\nabla u+\sigma F|^2}}\right)D_kv\,dz\\
		&=-\sum_{i=1}^{2n}\int_{\Om}\left(\pi_{i}(\nabla u+\sigma F)\frac{\|\nabla u+\sigma F\|^2_{\ast}}{\left(\varepsilon^3+\|\nabla u+\sigma F\|_{\ast}^3\right)^\frac{2}{3}}+\eta \frac{D_iu+\sigma F_i}{\sqrt{1+|\nabla u+\sigma F|^2}}\right)D_iD_kv\,dz\\
		&=-\sum_{i=1}^{2n}\int_{\Om}\left(\pi_{i}(\nabla u+\sigma F)\frac{\|\nabla u+\sigma F\|^2_{\ast}}{\left(\varepsilon^3+\|\nabla u+\sigma F\|_{\ast}^3\right)^\frac{2}{3}}+\eta \frac{D_iu+\sigma F_i}{\sqrt{1+|\nabla u+\sigma F|^2}}\right)D_kD_iv\,dz\\
		&=\sum_{i=1}^{2n}\int_{\Om}D_k\left(\pi_{i}(\nabla u+\sigma F)\frac{\|\nabla u+\sigma F\|^2_{\ast}}{\left(\varepsilon^3+\|\nabla u+\sigma F\|_{\ast}^3\right)^\frac{2}{3}}+\eta \frac{D_iu+\sigma F_i}{\sqrt{1+|\nabla u+\sigma F|^2}}\right)D_iv\,dz\\
		&=\sum_{i,j=1}^{2n}\int_{\Om}A_{i,j}^{\varepsilon,\eta}(z,\nabla u;\sigma F)D_k(D_ju+\sigma F_j)D_iv\,dz\\
		&=\sum_{i,j=1}^{2n}\int_{\Om}A_{i,j}^{\varepsilon,\eta}(z,\nabla u;\sigma F)D_j(D_ku+\sigma f_k)D_ivdz,
	\end{split}
\end{equation*}

being $A_{i,j}^{\eps,\eta}$ as in \eqref{coefficientsaaaaaaaa}.
\begin{comment}
	$$a{i,j}(x,p)=\frac{D_j(\pi_{0,i}\|\cdot\|^2_{K_0,*})(p)(\eps^3+\|p\|^3_{K_0,*})-2(\pi_{0,i}\|\cdot\|^2_{K_0,*})(p)(\pi_{0,j}\|\cdot\|^2_{K_0,*})(p)}{\left(\eps^3+\|p\|^3_{K_0,*}\right)^\frac{5}{3}}.$$
\end{comment}
Therefore we proved that 
\begin{equation}\label{mp2}
	\sum_{i,j=1}^{2n}\int_{\Om}A_{i,j}^{\varepsilon,\eta}(z,\nabla u;\sigma F)D_j(D_ku+\sigma f_k)D_iv\,dz=0
\end{equation}
for any $v\in C^2_c(\Om)$. Arguing as in \cite[Proposition 4.3]{MR2262784} it is easy to show that \eqref{mp2} actually holds for any $v\in C_c^1(\Om)$. Therefore, recalling \eqref{actuallyelliptic}, we proved that $D_ku+\sigma f_k$ is a weak solution to the linear uniformly elliptic equation
$$\divv(a_{i,j}^{\varepsilon,\eta}D_jw)=0,$$
where
$$a_{i,j}^{\varepsilon,\eta}(z):=A_{i,j}^{\varepsilon,\eta}(z,\nabla u;\sigma F(z)).$$
Hence, being $a_{i,j}^{\varepsilon,\eta}(z)$ bounded in $\Om$, thanks to Theorem \ref{8.1gt} with $b_i,c_i,d=0$ we conclude that 
\begin{equation*}
	\|\nabla u+\sigma f\|_{\infty,\Om}\leq\|\nabla u+\sigma f\|_{\infty,\partial\Om},
\end{equation*}
which in particular implies that 
\begin{equation}
	\|\nabla u\|_{\infty,\Om}\leq\|\nabla u\|_{\infty,\partial\Om}+2\|f\|_{\infty,\Om}.\qedhere
\end{equation}
\end{proof}

Finally we are left to provide boundary gradient estimates for solutions to \eqref{sigmadir}.
Therefore, inspired by \cite{MR336532}, we have to impose some constraints on the values of $H$ depending on the Finsler mean curvature of $\partial\Om$. More precisely, we require that
\begin{equation}
\label{curvcond}
|H|(z_0)< H_{K_0,\partial\Om}(z_0)
\end{equation}
for any $z_0\in\partial\Om$, where $H_{K_0,\partial\Om}$ is the $K_0$-mean curvature as defined in Subsection \ref{subjulian}. Here and in the rest of this section we assume that $K_0$ is a convex body in $C^{\infty}_+$ such that $0 \in \intt{K_0}$, since we need to apply the results of Section \ref{Julian} and Section \ref{ridge}.

\begin{proposition}
\label{prop:boundgradest}
 Let $K_0$ be a convex body in $C^{\infty}_+$ with $0\in\intt{K_0}$. Let $\Om\subseteq\rr^{2n}$ be an open and bounded set with $C^{2,\alpha}$ boundary, for some $0<\alpha<1$. Let $\varphi\in C^2(\overline\Om)$, $F\in C^0(\overline\Om,\rr^{2n})$ and $H\in Lip(\Om)$ satisfying \eqref{curvcond}. Finally, assume that there exist a constant $\eta_0=\eta_0(n,K_0,\Om,\varphi,F,H)\in(0,1)$ and a constant $\tilde{C_1}=\tilde{C_1}(n,K_0,\Om,\varphi,F,H)>0$, independent of $\sigma\in[0,1]$, $\eps\in (0,1)$ and $\eta\in (0,\eta_0)$, such that, for any solution $u\in C^2(\overline\Om)$ to \eqref{sigmadir} it holds that
\begin{equation}\label{uniformbound}
	\|u\|_{\infty,\Om}\leq \tilde{C_1}.
\end{equation}
Then, up to choosing a smaller $\eta_0=\eta_0(n,K_0,\Om,\varphi,F,H)\in(0,1)$, there exist a constant $C_2=C_2(n,K_0,\Om,\varphi,F,\tilde{C_1},H)>0$, independent of $\sigma\in[0,1]$, $\eps\in (0,1)$ and $\eta\in (0,\eta_0)$, such that any solution $u\in C^2(\overline\Om)$ to \eqref{sigmadir} with $\eta\in (0,\eta_0)$ satisfies
\begin{equation}
	\|\nabla u\|_{\infty,\partial\Om}\leq C_2.
\end{equation}
\end{proposition}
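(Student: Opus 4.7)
The goal is standard for prescribed mean curvature equations: given the $L^{\infty}$ bound \eqref{uniformbound} and $u=\sigma\varphi$ on $\partial\Omega$, I would build, in a collar neighborhood of $\partial\Omega$, upper and lower barriers $w^{+}$ and $w^{-}$ that agree with $\sigma\varphi$ on $\partial\Omega$, dominate $\pm u$ on the inner face of the collar, and are sub/supersolutions of \eqref{sigmadir}. A comparison argument then forces $w^{-}\leq u\leq w^{+}$ in the collar, and differentiating in the inner normal direction on $\partial\Omega$ gives $\|\nabla u\|_{\infty,\partial\Omega}\leq C_{2}$. By symmetry I describe only the construction of $w^{+}$.

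The plan is to choose the neighborhood $\mathcal{N}:=\{0<d<d_{0}\}$, with $d(z):=d_{K_{0},\partial\Omega}(z)$, inside the tubular neighborhood from Section \ref{Julian}, where $d\in C^{2,\alpha}$ and \eqref{eq:Ei} holds. I would define
\[
w^{+}(z):=\sigma\varphi(z)+\psi(d(z)),\qquad \psi(s):=\tfrac{1}{\nu}\log(1+\kappa s),
\]
with $\kappa,\nu>0$ to be chosen, and use that $\psi'>0$, $\psi''=-\nu(\psi')^{2}<0$. On $\partial\Omega$ we have $w^{+}=\sigma\varphi=u$, and I would fix $d_{0}<\bar\delta$ and then choose $\kappa$ so that $\psi(d_{0})\geq 2\tilde{C}_{1}+2\|\varphi\|_{\infty}$, which by \eqref{uniformbound} gives $w^{+}\geq u$ on $\{d=d_{0}\}$. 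The non-trivial step is showing $Qw^{+}\leq 0$ in $\mathcal{N}$, where $Q$ is the left-hand side of \eqref{sigmadir} minus $\sigma H$.

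To verify the supersolution inequality I would argue as follows. Since $\nabla w^{+}+\sigma F=\psi'(d)\nabla d+\sigma(\nabla\varphi+F)$, and $\psi'(0)=\kappa/\nu$ can be made arbitrarily large by first taking $\kappa$ large, the vector $\nabla w^{+}+\sigma F$ lies in a cone around $\nabla d$, so $\|\nabla w^{+}+\sigma F\|_{*}\approx\psi'(d)\|\nabla d\|_{*}=\psi'(d)$ by \eqref{eq:Ei}. Exploiting the explicit formula for $\pi_{\eps}^{h}$ from Proposition \ref{prop:pieps2}(i), $0$-homogeneity of $\pi$, and the identity $\pi(\nabla d)=\pi\circ N_{t}$ valid on each level set, the leading order of $\divv(\pi_{\eps}^{h}(\nabla w^{+}+\sigma F))$ is
\[
\frac{(\psi'(d))^{2}}{(\eps^{3}+(\psi'(d))^{3})^{2/3}}\bigl(-H_{K_{0},\Sigma_{d}}\bigr)+\psi''(d)\,\langle D\pi(\nabla d)\nabla d,\nabla d\rangle\cdot(\text{positive factor})+R_{\eps},
\]
where the remainder $R_{\eps}$ collects bounded contributions from $D^{2}\varphi$, $DF$ and the first-order perturbation, uniformly in $\eps\in(0,1)$. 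Because $\Phi_{\eps}(s):=s^{2}/(\eps^{3}+s^{3})^{2/3}$ converges to $1$ as $s\to\infty$ uniformly on compact sets of $\eps$, and by \eqref{tange2} we have $H_{K_{0},\Sigma_{d}}(z)\geq H_{K_{0},\partial\Omega}(q)$ for any closest point $q\in D(z)$, the \emph{strict} curvature gap
\[
\inf_{z_{0}\in\partial\Omega}\bigl(H_{K_{0},\partial\Omega}(z_{0})-|H(z_{0})|\bigr)>0
\]
provided by \eqref{curvcond} and compactness of $\partial\Omega$ dominates $\sigma H$ once $\psi'(0)$ is sufficiently large. The $\eta$-term contributes an Euclidean mean-curvature piece of size $O(\eta)$, absorbed by choosing $\eta_{0}$ small; the remaining first-order error $R_{\eps}$ and the Lipschitz modulus of $H$ are absorbed by the strictly negative term $\psi''(d)\langle D\pi(\nabla d)\nabla d,\nabla d\rangle$, which is quadratically large in $\psi'(d)$ by \eqref{positivedefinite}, through a final choice of $\nu$. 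Comparison between $w^{+}$ and $u$ on $\partial\mathcal{N}$ combined with Theorem \ref{8.1gt} applied to the linearization at a potential interior maximum of $u-w^{+}$ then yields $u\leq w^{+}$ in $\mathcal{N}$, and hence $\partial_{N}u(z_{0})\leq\psi'(0)+|\nabla\varphi(z_{0})|$ for every $z_{0}\in\partial\Omega$.

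The principal obstacle is precisely the need for uniformity in both $\eps\in(0,1)$ and $\eta\in(0,\eta_{0})$: the Finsler principal term approaches its limit $\pi$ only when $\psi'(d)\gg\eps$, while the ellipticity constant in \eqref{actuallyelliptic} simultaneously vanishes as $\eta\to 0$, preventing one from simply inheriting classical Euclidean barriers. The resolution is the order of choices described above — first $d_{0}$ and $\kappa$ to fix the collar and the magnitude of $\psi'$ relative to $\tilde{C}_{1}$; then $\nu$ large to amplify the negative $\psi''$ contribution and absorb the $C^{1}$-bounded perturbations from $\varphi$ and $F$; and finally $\eta_{0}$ small to quench the Euclidean correction — each depending only on $n,K_{0},\Omega,\varphi,F,\tilde{C}_{1},H$, and not on $\sigma,\eps,\eta$.
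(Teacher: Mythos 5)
There is a genuine gap in your barrier construction, precisely at the step you flag as the key absorption mechanism. You claim that the term
\[
\psi''(d)\,\langle D\pi(\nabla d)\nabla d,\nabla d\rangle
\]
is strictly negative and ``quadratically large in $\psi'(d)$'' thanks to \eqref{positivedefinite}. But $\pi=\pi_{K_0}$ is positively $0$-homogeneous, so $D\pi(q)\cdot q=0$ for every $q\neq 0$ (the paper itself uses this, e.g.\ right after \eqref{eq:normpi} and in \eqref{tange}). In particular $\langle D\pi(\nabla d)\nabla d,\nabla d\rangle\equiv 0$: your ``strictly negative'' absorbing term is identically zero. A more careful computation of what the $\psi''$ term actually contributes, contracting $D_i d\, D_j d$ against the full matrix $A^{\eps,\eta}_{ij}$ from \eqref{coefficientsaaaaaaaa}, gives
\[
\sum_{i,j}A^{\eps,\eta}_{ij}\,D_i d\, D_j d \;=\; \frac{2\,\eps^3\,\|q\|_*}{(\eps^3+\|q\|_*^3)^{5/3}}\;+\;O(\eta),
\]
which for $\|q\|_*\approx\psi'(d)$ large is of order $\eps^3/\psi'(d)^4+\eta$. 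Multiplying by $\psi''(d)=-\nu\psi'(d)^2$ yields a contribution of size $\nu\eps^3/\psi'(d)^2+\nu\eta$, negligible and, in particular, not strong enough to absorb $O(1)$ perturbations. This is exactly the degeneracy of the Finsler operator along the direction $\nabla d$, and it is why the Euclidean log-barrier heuristic does not transfer.

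The problem then propagates: with the logarithmic profile $\psi(s)=\tfrac{1}{\nu}\log(1+\kappa s)$, one cannot make $\psi'(d)$ uniformly large across the whole collar $\{0<d<d_0\}$ --- at $d=d_0$, $\psi'(d_0)\le 1/(\nu d_0)$ no matter how large $\kappa$ is --- so the approximations $\|\nabla w^++\sigma F\|_*\approx\psi'(d)$ and $\pi_\eps^h(\nabla w^++\sigma F)\approx\pi(\nabla d)$ break down there, and there is no $\psi''$ mechanism left to save the inequality. The paper avoids this altogether by using the \emph{linear} barrier $w^+(z)=k\,d_{K_0,\partial\Omega}(z)+\sigma\varphi(z)$ (so $\psi''\equiv 0$): taking $k\to\infty$ keeps $\|\nabla w^++\sigma F\|_*\geq k-\|R_\sigma\|_*$ uniformly large on $\Gamma_\mu$, multiplying the operator by $(\eps^3+\|\nabla w^++\sigma F\|_*^3)^{5/3}$ isolates the leading term $k^5\bigl(\divv(\pi(\nabla d))+\eta\divv(\nabla d/|\nabla d|)+|H|\bigr)$, and all perturbations from $\varphi,F$ and the Lipschitz variation of $H$ enter only through $R_\sigma/k$ and are $o(k^5)$ uniformly in $\eps,\sigma,z$. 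The curvature gap from \eqref{curvcond2} plus the smallness of $\mu$ and $\eta$ makes the leading coefficient $\leq -C_3<0$, and choosing $k$ large closes the estimate. Your overall outline (tubular neighborhood, Finsler Eikonal, curvature comparison \eqref{tange2}, and the order $d_0\to\kappa\to\nu\to\eta_0$ of choices) is otherwise aligned with the paper, but the barrier profile and the absorption argument need to be replaced along these lines.
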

\begin{proof}
First of all we notice that, being $\partial\Om$ compact and $H_{K_0,\partial\Om}$ continuous,
\eqref{curvcond} implies the existence of a positive constant $C_3=C_3(K_0,\Om,H)$ such that
\begin{equation}
\label{curvcond2}
    |H(z_0)|\leq H_{K_0,\partial\Om}(z_0)-3C_3
\end{equation}
for any $z_0\in\partial\Om$.
 In order to prove this result we use a barrier argument as in \cite[Chapter 14]{GT}. Therefore, for any $z_0\in\partial\Om$, we have to find a neighborhood $\mathcal{N}$ of $z_0$ in $\Om$ and two functions $w^+,w^-\in C^2(\mathcal{N})$, called \emph{upper barrier} and \emph{lower barrier} respectively, such that
	\begin{equation*}\label{zero}
		w^+(z_0)=w^-(z_0)=\sigma\varphi(z_0),
	\end{equation*}
	\begin{equation*}\label{diseqbordo}
		w^-(z)\leq u(z)\leq w^+(z)
	\end{equation*}
	for any $z\in\partial\mathcal{N}$, 
	\begin{equation*}\label{superbar}
		\divv(\pi_\eps^h(\nabla w^++\sigma F))+\eta\divv\left(\frac{\nabla w^++\sigma F}{\sqrt{1+|\nabla w^++\sigma F|^2}}\right)<\sigma H
	\end{equation*}
	for any $z\in\mathcal{N}$ and 
	\begin{equation*}\label{subbar}
		\divv(\pi_\eps^h(\nabla w^- + \sigma F))+\eta\divv\left(\frac{\nabla w^-+\sigma F}{\sqrt{1+|\nabla w^-+\sigma F|^2}}\right) >\sigma H
	\end{equation*}
	for any $z\in\mathcal{N}$.
In this proof we deal only with the upper barrier, being the other case analogous.
In order to find an upper barrier, we consider a tubular neighborhood $\mathcal{O}$ of $\partial\Om$ and we let $\Gamma_\mu:=\{x\in\overline\Om\,:\,\dfz(x)<\mu\}$, where $\dfz$ is the Finsler distance from the boundary, $\mu\in (0,\mu_0)$ and $\mu_0>0$ is small enough to ensure that $\Gamma_\mu\subseteq\Gamma_{\mu_0}\Subset\mathcal{O}$ for any $\mu\in (0,\mu_0)$. Let us denote by $H_{\Sigma_{d(z)}}(z)$ the Euclidean mean curvature of $\Sigma_{d(z)}$ at any $z\in\overline\Gamma_{\mu_0}$. Being $H_{\Sigma_{d(z)}}$ continuous on $\overline\Gamma_{\mu_0}$, there exists a constant $C_4=C_4(\Om,K_0)>0$ such that
\begin{equation}\label{boundeuclcurv}
    |H_{\Sigma_{d(z)}}(z)|\leq C_4
\end{equation}
for any $z\in\overline\Gamma_{\mu_0}$.
We fix $\mu\in(0,\mu_0)$ and we define $w^+:\Gamma_\mu\scu\rr$ by $w^+(z):=k\dfz(z)+\sigma\varphi(z)$, where $k>0$ has to be chosen.
First, thanks to \eqref{diff}, $w^+\in C^2(\overline{\Gamma_\mu})$, and for any $z\in\Gamma_\mu$ there exists a unique $z_0\in\partial\Om$ such that $\dfz(z)=\|z-z_0\|$. Moreover, it is clear that $w^+(z_0)=\sigma\varphi(z_0)$ for any $z_0\in\partial\Om$. Thanks to \eqref{uniformbound}, if we choose $$k\geq \frac{\tilde{C_1}+\|\varphi\|_{\infty,\Om}}{\mu},$$ it follows that $w^+(z)\geq u(z)$ for any $z\in\Om$ with $\dfz(z)=\mu$, and so we conclude that $u(z)\leq w^+(z)$ for any $z\in\partial\Gamma_\mu$.
We are left to show that $w^+$ is a subsolution to \eqref{sigmadir}. Therefore it suffices to show that
\begin{equation*}
    (\eps^3+\|\nabla w^++\sigma F\|_{*}^3)^\frac{5}{3}
		\left(\divv(\pi_\eps^h(\nabla w^++\sigma F))+\eta\divv\Big(\frac{\nabla w^++\sigma F}{\sqrt{1+|\nabla w^++\sigma F|^2}}\Big)-\sigma H\right) < 0
\end{equation*}
on $\Gamma_\mu$. Taking $k>\sup_\Om\|-F\|_{*},$ \eqref{eq:Ei} ensures that $k\nabla \dfz(z)+\sigma F(z)\neq 0$ for any $z\in\Gamma_\mu$ and $\sigma\in [0,1]$. Let us notice that Proposition \ref{prop:pieps2} and a simple computation imply that
\begin{equation*}
    \begin{split}
        (\eps^3+&\|\nabla w^++\sigma F\|_{*}^3)^\frac{5}{3}
		\divv(\pi_\eps^h(\nabla w^++\sigma F))\\
		=&(\eps^3+\|\nabla w^++\sigma F\|_{*}^3)^\frac{5}{3}
		\divv\left(\frac{\pi(\nabla w^++\sigma F)\|\nabla w^++\sigma F\|_*^2}{(\eps^3+\|\nabla w^++\sigma F\|_*^3)^\frac{2}{3}}\right)\\
		=&(\eps^3+\|\nabla w^++\sigma F\|_{*}^3)\underbrace{\divv(\pi(\nabla w^++\sigma F)\|\nabla w^++\sigma F\|_*^2)}_A\\
		&+(\eps^3+\|\nabla w^++\sigma F\|_{*}^3)^\frac{5}{3}\underbrace{\|\nabla w^++\sigma F\|_*^2\langle \pi(\nabla w^++\sigma F),\nabla\left( (\eps^3+\|\nabla w^++\sigma F\|_*^3)^{-\frac{2}{3}}\right)\rangle}_B.
    \end{split}
\end{equation*}
We estimate separately $A$ and $B$.  In the following computations we let $d:=\dfz$ and $R_\sigma :=\sigma\nabla\varphi+\sigma F$. We are going to exploit the fact that, thanks to the homogeneity properties of the equation, the contribution of $R_\sigma$ as $k\to\infty$ is negligible. Let us notice that by \eqref{eq:Ei} and \eqref{eq:normpi} we get 
\begin{equation}\label{finseikcons2}
	\pi(\nabla \dfz)\cdot D^2\dfz=0.
\end{equation}
Hence, thanks to \eqref{eq:Ei}, \eqref{finseikcons2}, the $1$-homogeneity of $\|\cdot\|_*$, the $0$-homogeneity of $\pi$, the $-1$-homogeneity of $D\pi$ and the properties of $\|\cdot\|_{*}$, it holds that
\begin{comment}
\begin{equation*}
	\begin{split}
		A=&\|k\nabla d+\sigma \nabla\varphi+\sigma F\|^2_{*}\sum_{i=1}^{2n}D_i\left(\pi_{i}(k\nabla d+\sigma \nabla\varphi +\sigma F)\right)\\
		&+\sum_{i=1}^{2n}\pi_{i}(k\nabla d+\sigma \nabla\varphi +\sigma F)D_i\left(\|k\nabla d+\sigma \nabla\varphi+\sigma F\|^2_{*}\right)\\
		=&k^2\left\|\nabla d+\frac{\sigma \nabla\varphi +\sigma F}{k}\right\|^2_{*}\sum_{i,j=1}^{2n}D_i\pi_{j}\left(\nabla d+\frac{\sigma \nabla\varphi+\sigma F}{k}\right)\left(D_{ij}d+\frac{\sigma D_{ij}\varphi+\sigma D_iF_j}{k}\right)\\
		&+2k^2\left\|\nabla d+\frac{\sigma \nabla\varphi+\sigma F}{k}\right\|_{*}\pi\left(\nabla d+\frac{\sigma \nabla\varphi+\sigma F}{k}\right)\left(D^2 d+\frac{\sigma D^2\varphi+\sigma DF}{k}\right)\pi\left(\nabla d+\frac{\sigma \nabla\varphi+\sigma F}{k}\right)^T\\
		=&k^2\added[id=S]{(1+o(1))}(\divv(\pi(\nabla d))+o(1))+2k^2(1+o(1))(\pi(\nabla d)D^2 d\pi(\nabla d)^T+o(1))\\
		=&k^2\divv(\pi(\nabla d))+o(k^2)
	\end{split}
\end{equation*}
\end{comment}
\begin{equation*}
	\begin{split}
		A=&\|k\nabla d+R_\sigma\|^2_{*}\sum_{i=1}^{2n}D_i\left(\pi_{i}(k\nabla d+R_\sigma)\right)+\sum_{i=1}^{2n}\pi_{i}(k\nabla d+R_\sigma)D_i\left(\|k\nabla d+R_\sigma\|^2_{*}\right)\\
		=&\|k\nabla d+R_\sigma\|^2_{*}\sum_{i,j=1}^{2n}D_i\pi_{j}(k\nabla d+R_\sigma)(kD_{ij}d+D_i R_{\sigma,j})\\
		&+2\|k\nabla d+R_\sigma\|_{*}\pi(k\nabla d+R_\sigma)\cdot(kD^2 d+DR_\sigma)\cdot\pi(k\nabla d+R_\sigma)^T\\
		=&k^2\left\|\nabla d+\frac{R_\sigma}{k}\right\|^2_{*}\sum_{i,j=1}^{2n}D_i\pi_{j}\left(\nabla d+\frac{R_\sigma}{k}\right)\left(D_{ij}d+\frac{D_i R_{\sigma,j}}{k}\right)\\
		&+2k^2\left\|\nabla d+\frac{R_\sigma}{k}\right\|_{*}\pi\left(\nabla d+\frac{R_\sigma}{k}\right)\cdot\left(D^2 d+\frac{DR_\sigma}{k}\right)\cdot\pi\left(\nabla d+\frac{R_\sigma}{k}\right)^T\\
		=&k^2(1+o(1))(\divv(\pi(\nabla d))+o(1))+2k^2(1+o(1))(\pi(\nabla d)\cdot D^2 d\cdot \pi(\nabla d)^T+o(1))\\
		=&k^2\divv(\pi(\nabla d))+o(k^2),
	\end{split}
\end{equation*}
which allows to infer that
\begin{equation*}
    \begin{split}
        (\eps^3+\|\nabla w^++\sigma F\|_{*}^3)A=k^5\divv(\pi(\nabla d))+o(k^5)
    \end{split}
\end{equation*}
as $k\to\infty$, where $o(k^2)$ is uniform with respect to $z\in\Gamma_\mu$, $\eps\in (0,1)$ and $\sigma\in[0,1]$. Now, exploiting the same properties as above, we estimate $B$:
\begin{equation*}
    \begin{split}
        (\eps^3+\|k\nabla d+R_\sigma\|_*^3)^{\frac{5}{3}}B&=-2\|k\nabla d+R_\sigma\|_*^4\langle \pi(k\nabla d+R_\sigma),\nabla(\|k\nabla d+R_\sigma\|_*)\rangle\\
        &=-2\|k\nabla d+R_\sigma\|_*^4\pi(k\nabla d+R_\sigma)\cdot(kD^2 d+DR_\sigma)\cdot\pi(k\nabla d+R_\sigma)^T\\
        &=-2k^5\left\|\nabla d+\frac{R_\sigma}{k}\right\|_*^4\pi\left(\nabla d+\frac{R_\sigma}{k}\right)\cdot\left(D^2 d+\frac{DR_\sigma}{k}\right)\cdot\pi\left(\nabla d+\frac{R_\sigma}{k}\right)^T\\
        &=-2k^5(1+o(1))(\pi(\nabla d)\cdot D^2 d\cdot \pi(\nabla d)^T+o(1))\\
        &=-2k^5(1+o(1))o(1)\\
        &=o(k^5).
    \end{split}
\end{equation*}
as $k\to\infty$ and uniformly with respect to $\eps\in(0,1)$, $\sigma\in[0,1]$ and $z\in\Gamma_\mu$.
From a similar computation, it follows that
\begin{equation*}
\begin{split}
\divv\left(\frac{\nabla w^++\sigma F}{\sqrt{1+|\nabla w^++\sigma F|^2}}\right)&=\frac{\divv(\nabla d)+\frac{\divv R_\sigma}{k}}{\sqrt{\frac{1}{k^2}+\left|\nabla d+\frac{R_\sigma}{k}\right|^2}}-\frac{\left(\nabla d+\frac{R_\sigma}{k}\right)\cdot\left( D^2d+\frac{DR_\sigma}{k}\right)\cdot \left(\nabla d+\frac{R_\sigma}{k}\right)^T}{\left(\frac{1}{k^2}+\left|\nabla d+\frac{R_\sigma}{k}\right|^2\right)^{3/2}}\\
&=\frac{\divv(\nabla d)}{|\nabla d|}-\frac{\nabla d\cdot D^2 d\cdot \nabla d^T}{|\nabla d|^3} +o(1)\\
&=\divv\left(\frac{\nabla d}{|\nabla d|}\right)+o(1)
\end{split}
\end{equation*}
as $k\to\infty$ and uniformly with respect to $\sigma\in[0,1]$ and $z\in\Gamma_\mu$.
Finally, it is easy to see that 
\begin{equation*}
    -(\eps^3+\|\nabla w^++\sigma F\|_{*}^3)^\frac{5}{3}\sigma H\leq(\eps^3+\|\nabla w^++\sigma F\|_{*}^3)^\frac{5}{3}|H|=k^5|H|+o(k^5)
\end{equation*}
as $k\to\infty$ and uniformly with respect to $\eps\in(0,1)$, $\sigma\in[0,1]$ and $z\in\Gamma_\mu$.
In the end we get that 
\begin{equation*}
\begin{split}
      (\eps^3+\|\nabla w^++\sigma F\|_{*}^3)^\frac{5}{3}
		&\left(\divv(\pi_\eps^h(\nabla w^++\sigma F))+\eta \divv\left(\frac{\nabla w^++\sigma F}{\sqrt{1+|\nabla w^++\sigma F|^2}}\right)-\sigma H\right)\\
  &\leq k^5\left(\divv(\pi(\nabla d))+\eta\divv\left(\frac{\nabla d}{|\nabla d|}\right)+|H|\right)+o(k^5)
\end{split}
\end{equation*}
as $k\to\infty$ and uniformly with respect to $\eps\in(0,1)$, $\sigma\in[0,1]$ and $z\in\Gamma_\mu$.

\begin{comment}
 Therefore, exploiting the same properties as above, we can conclude that
\begin{equation*}
	\begin{split}
		(\eps^3&+\|\nabla w^++\sigma F\|_{*}^3)^\frac{5}{3}
		(\divv(\pi_\eps^h(\nabla w^++\sigma F))-\sigma H)\leq\\
		&\left(\eps^3+k^3\left\|\nabla d+\frac{\sigma \nabla\varphi+\sigma F}{k}\right\|_{*}^3\right)k^2\divv(\pi(\nabla d)+o(k^2))-\\
		&-k^5G\left(\nabla d+\frac{\sigma \nabla\varphi+\sigma F}{k}\right)\left(D^2d+\frac{\sigma D^2\varphi+\sigma DF}{k}\right)G\left(\nabla d+\frac{\sigma \nabla\varphi+\sigma F}{k}\right)^T-\\
		&+\left(\eps^2+k^2\left\|\nabla d+\frac{\sigma \nabla\varphi+ \sigma F}{k}\right\|_{*}^2\right)^\frac{3}{2}|H|\\
		&=k^2(k^3+o(k^3))(\divv(\pi(\nabla d)+o(k^2)))+o(k^5)+|H|(k^5+o(k_5))\\
		&=k^5\left(\divv(\pi(\nabla d))+|H|+o(1)\right)
	\end{split}
\end{equation*}
\end{comment}

 Now, let $z\in\Gamma_\mu$ and let $z_0\in\partial\Om$ be such that $d(z)=\|z-z_0\|$. Thanks to the Lipschitz continuity of $H$ and the equivalence between $\|\cdot\|$ and the Euclidean norm, there exists a constant $C_5=C_5(K_0)$ such that 
\begin{equation}\label{Hnocost}
    \begin{split}
        |H|(z)=|H|(z_0)+|H|(z)-|H|(z_0)
        \leq |H|(z_0)+C_5 d(z)
        \leq |H|(z_0)+C_5\mu.
    \end{split}
\end{equation}
Hence, thanks to \eqref{eq:meanpar}, \eqref{tange}, \eqref{curvcond2} and \eqref{boundeuclcurv}, we conclude that 
\begin{equation}\label{perdopo}
    \begin{split}
        \divv(\pi(\nabla d))(z)+\eta\divv\left(\frac{\nabla d}{|\nabla d|}\right)+|H|(z_0)+C_5\mu
        &=-H_{K_0,\Sigma_{d(z)}}(z)-\eta H_{\Sigma_{d(z)}}(z)+|H|(z_0)+C_5\mu\\
        &\leq-H_{K_0,\partial\Om}(z_0)+\eta C_4+|H|(z_0)+C_5\mu\\
        &\leq-C_3<0,
    \end{split}
\end{equation}
provided that $\mu\leq \frac{C_3}{C_5}$ and $\eta\leq  \frac{C_3}{C_4}$.
Hence we found an upper barrier, from which the thesis follows.
\end{proof}

\begin{remark}
\label{rk:convex}
Assume that $n=1$, let $\Omega \subseteq \rr^2$ and $K_0 \in C^2_{+}$ be a convex body of $\rr^2$. 
If \eqref{curvcond} holds then $\Omega$ is strictly convex. 
Indeed, by Proposition \ref{prop:FMCEwrtE} we have
\[
0 \leq |H|< -\dfrac{\escpr{D_{e_1} N_{z_0}, e_1}}{k^{K_0}(\pi(N_{z_0}))}=\dfrac{k^{\partial \Om}(z_0)}{k^{K_0}(\pi(N_{z_0}))},
\]
where $k^{K_0}$ and $k^{\partial \Om}$ are the the Euclidean geodesic curvatures of $\partial K $ and $\partial \Om$. Since $k^{K_0}$ is strictly positive we obtain $k^{\partial \Om}(z_0)>0$, hence $\Om$ is strictly convex.

\end{remark}

To conclude this section, inspired by \cite{MR282058} we want to show that, in the particular case in which $H$ is constant and $n=1$, then we can exploit \eqref{curvcond} in order to obtain uniform estimates of the function, without requiring the validity of \eqref{in:hip}. Again, in order to apply the results of Section \ref{Julian} and Section \ref{ridge}, we assume that $K_0$ is a convex body in $C^{\infty}_+$ such that $0 \in \intt{K_0}$ and $\partial \Omega$ belongs to $C^{2,1}$.
\begin{proposition}
\label{th:Cinfestimateconst}
Assume that $n=1$. Let $K_0$ be a convex body in $C^{\infty}_+$ with $0\in\intt{K_0}$. Let $\Om\subseteq \rr^{2}$ be a bounded domain with $C^{2,1}$ boundary, let $\varphi\in C^2(\overline\Om)$ and let $H$ be a constant which satisfies \eqref{curvcond}. There exists a constant $C_1=C_1(K_0,\Om,\varphi,H,F)>0$, independent of $\sigma\in[0,1]$, $\eps\in (0,1)$ and $\eta\in (0,1)$, such that, for any solution  $u\in C^2(\overline\Omega)$ to \eqref{sigmadir},
it holds that
\[
\|u\|_{\infty,\Om}\leq C_1.
\]
\end{proposition}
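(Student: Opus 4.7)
The strategy is a global barrier argument on $\overline\Om$ based on the interior Finsler distance $d:=d_{K_0,\partial\Om}$. The hypothesis $n=1$ enters decisively because, by Remark \ref{rk:convex}, condition \eqref{curvcond} forces $\Om\subseteq\rr^2$ to be strictly convex, which, together with the $C^{\infty}_+$ regularity of $K_0$ and the $C^{2,1}$ regularity of $\partial\Om$, guarantees all the properties of $d$ collected in Sections \ref{Julian} and \ref{ridge}. In contrast with Proposition \ref{th:Cinfestimate}, we do not have \eqref{in:hip} at our disposal; instead, the gap between $|H|$ and the Finsler mean curvature of $\partial\Om$, combined with \eqref{tange2}, will feed directly into a supersolution inequality for the desired barrier.

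First I would fix, by compactness of $\partial\Om$ and \eqref{curvcond}, a constant $C_3>0$ satisfying $|H|\leq H_{K_0,\partial\Om}(z_0)-3C_3$ for every $z_0\in\partial\Om$, exactly as in \eqref{curvcond2}. I then set
\[
w^+(z):=\sigma\varphi(z)+k\,d(z),\qquad k>0,
\]
on all of $\overline\Om$. Since $d=0$ on $\partial\Om$, we have $w^+=\sigma\varphi=u$ there, and by \eqref{diff}, $w^+\in C^{2,1}(\intt{\Om_1}\cup\partial\Om)$. On this open set, the computation already carried out in the proof of Proposition \ref{prop:boundgradest} applies \emph{verbatim}; the only change is that the domain of validity becomes all of $\intt(\Om_1)$ rather than a thin tubular neighborhood $\Gamma_\mu$. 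One obtains, uniformly in $z\in\intt(\Om_1)$, $\sigma\in[0,1]$ and $\eps\in(0,1)$,
\begin{align*}
(\eps^3+\|\nabla w^++\sigma F\|_{*}^3)^{5/3}&\Bigl(\divv(\pi_\eps^h(\nabla w^++\sigma F))+\eta\divv\Bigl(\tfrac{\nabla w^++\sigma F}{\sqrt{1+|\nabla w^++\sigma F|^2}}\Bigr)-\sigma H\Bigr)\\
&\leq k^5\Bigl(\divv(\pi(\nabla d))+\eta\,\divv\bigl(\tfrac{\nabla d}{|\nabla d|}\bigr)+|H|\Bigr)+o(k^5)
\end{align*}
as $k\to\infty$. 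By \eqref{tange2}, \eqref{boundeuclcurv} (applied to the parallel hypersurfaces $\Sigma_{d(z)}$) and the choice of $C_3$, the bracket is bounded above by $-3C_3+\eta C_4$, hence by $-C_3$ as soon as $\eta\leq C_3/C_4=:\eta_0$. Choosing $k$ large enough (depending only on $K_0,\Om,\varphi,H,F$), $w^+$ is then a classical \emph{strict} supersolution of \eqref{sigmadir} on $\intt(\Om_1)$; observe that the boundedness of $\|w^+\|_\infty$ on $\overline\Om$ is automatic from the boundedness of $d$.

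The remaining and most delicate point is to deduce $u\leq w^+$ globally on $\overline\Om$, and symmetrically $w^-\leq u$ with $w^-:=\sigma\varphi-k\,d$, which will give the bound $\|u\|_{\infty,\Om}\leq\max(\|w^+\|_\infty,\|w^-\|_\infty)=:C_1$ depending only on $K_0,\Om,\varphi,H,F$. The operator in \eqref{sigmadir} is uniformly elliptic on bounded gradients by \eqref{actuallyelliptic}, so the strong comparison principle applies inside $\intt(\Om_1)$, where $w^\pm$ are $C^2$. The obstacle is that $w^\pm$ are only continuous on the Ridge $R$, and this is precisely the reason for the assumption $n=1$: by Section \ref{ridge} (cf. \cite[Corollary 1.6]{MR2094267}), $R$ has Hausdorff dimension at most $2n-1=1$ and empty interior. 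The plan is to perturb $w^+$ into $w^++\rho|z|^2$ for arbitrarily small $\rho>0$, so as to preserve the strict supersolution inequality in $\intt(\Om_1)$ and the boundary inequality $w^+\geq u$ on $\partial\Om$; a positive maximum of $u-w^+-\rho|z|^2$ on $\overline\Om$ cannot lie on $\partial\Om$, cannot lie in $\intt(\Om_1)$ by the strong comparison principle, and cannot lie on $R$ either because, $R$ being $1$-Hausdorff-small with empty interior, one can slide the test point into $\intt(\Om_1)$ and reduce to the previous case. Sending $\rho\to 0$ yields $u\leq w^+$ in $\overline\Om$, and an identical argument gives $w^-\leq u$, completing the proof. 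The main obstacle, as just described, is the global comparison across the ridge, and the low-dimensional topology of $R$ in the plane is what makes the perturbation argument go through.
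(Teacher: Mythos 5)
Your overall strategy (barrier $w^+=\sigma\varphi+kd$ with the Finsler distance, comparison in $\intt\Om_1$, then a separate argument at the ridge) matches the paper's. However there are two genuine gaps.

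First, you claim that the bracket $\divv(\pi(\nabla d))+\eta\divv(\nabla d/|\nabla d|)+|H|$ is controlled by $-3C_3+\eta C_4$, invoking \eqref{boundeuclcurv}. But \eqref{boundeuclcurv} gives $|H_{\Sigma_{d(z)}}(z)|\leq C_4$ only on the compact tubular collar $\overline{\Gamma_{\mu_0}}$, not on all of $\intt\Om_1$: the Euclidean mean curvature of the inner parallel surfaces $\Sigma_{d(z)}$ degenerates as one approaches the ridge, so there is no uniform $C_4$ on $\intt\Om_1$. This is precisely where the hypothesis $n=1$ enters in the paper, and in a way different from what you suggest. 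For $n=1$ there is a single principal direction, so by Proposition \ref{prop:FMCEwrtE} one has the scalar identity $H_{\Sigma_{d(z)}}(z)=k^{K_0}(\pi_K(N_z))\,H_{K_0,\Sigma_{d(z)}}(z)$; since $k^{K_0}>0$ and $H_{K_0,\Sigma_{d(z)}}(z)\geq H_{K_0,\partial\Om}(z_0)>0$ on $\intt\Om_1$ by \eqref{tange2}, the $\eta$-term is automatically nonpositive and can simply be dropped, with no restriction on $\eta$. Your argument only covers $\eta\leq C_3/C_4$, which already contradicts the statement's claim of independence over all $\eta\in(0,1)$; more importantly, it relies on a bound you do not have away from the collar. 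As the paper notes in the remark following the proposition, it is exactly this sign control of the Euclidean curvature of parallel hypersurfaces that fails for $n\geq 2$, which is why the result is stated only for $n=1$.

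Second, the treatment of the ridge is not a proof. Adding $\rho|z|^2$ does not obviously preserve the supersolution inequality (the operator involves second derivatives, and $\rho|z|^2$ pushes the wrong way for a supersolution), and the assertion that ``one can slide the test point into $\intt(\Om_1)$'' has no content: the maximum of a continuous function can be attained only on a nowhere-dense set, and a nearby point need not be a near-maximizer in the sense required to rerun the comparison. The paper handles this with a concrete geometric argument: apply Theorem \ref{8.1gt} on $\intt\Om_1$ to get $\sup_{\Om_1}(u-v)\leq\sup_{\partial\Om\cup R}(u-v)^+$; then, supposing the positive supremum is attained at $z_0\in R$, show that the one-sided derivative of $d$ toward the closest boundary point is $+1$ (hence $D^+_\nu v(z_0)<0$), deduce $\nabla u(z_0)\neq 0$, fit a small Finsler ball $B_{-K_0}(x_0,\varrho)$ tangent to the level set $\{u=u(z_0)\}$ inside $\{u\geq u(z_0)\}$, show $d\geq d(z_0)$ on this ball by maximality, and then a chain of triangle inequalities together with the strict convexity of $K_0$ forces $z_0\in\intt\Om_1$ via Proposition \ref{utile}, a contradiction. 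This geometric step is essential and is not replaced by the perturbation heuristic you describe.
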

\begin{proof} Let $k^{K_0}$ be the geodesic curvature of $K_0$. Since $K_0\in C_+^\infty$, then in particular
$k^{K_0}(p)>0$ for any $p\in \partial K_0$. Let $C_3=C_3(K_0,\Om,H)$ be as in \eqref{curvcond2}.
Let us define the function $v:\intt{\Om_1}\scu\rr$ by
\begin{equation}\label{barrierafinale}
    v(z):=\sup_{\partial\Om}|\varphi|+k\dfz(z)
\end{equation}
for any $z\in\Om_1$, where $k>0$ has to be chosen and $\Om_1$ is the set defined in \eqref{om1}. We already know (cf. \eqref{diff}) that $v\in C^2(\intt{\Om_1})$.  We repeat \emph{verbatim} the computations of the proof of Proposition \ref{prop:boundgradest} up to \eqref{perdopo}, with the difference that, being $H$ constant, we can choose $C_5=0$ in \eqref{Hnocost}. Since $n=1$, we exploit Proposition \ref{prop:FMCEwrtE} to infer that
\begin{equation}\label{perdopo2.0}
    \begin{split}
        \divv(\pi(\nabla d))(z)+\eta\divv\left(\frac{\nabla d}{|\nabla d|}\right)+|H|&
        =-H_{K_0,\Sigma_{d(z)}}(z)-\eta H_{\Sigma_{d(z)}}(z)+|H|\\
        &=-H_{K_0,\Sigma_{d(z)}}(z)-\eta k^{K_0}\left(\pi_K(N_z)\right) H_{K_0,\Sigma_{d(z)}}(z)+|H|\\
        &\leq-H_{K_0,\partial\Om}(z_0)+|H|\\
        &\leq-3C_3<0.
    \end{split}
\end{equation}
Hence there exists $k>0$, independent of $\eps\in(0,1),$ $\eta\in (0,1)$, $\sigma\in[0,1]$ and $z\in\Om_1$, such that $v$ is a subsolution to \eqref{sigmadir} on $\intt{\Om_1}$. Therefore, arguing as in the proof of \cite[Theorem 10.7]{GT}, it follows that $w:=u-v$ is a weak supersolution on $\intt{\Om_1}$ to a linear elliptic equation of the form
\begin{equation*}
    \sum_{i,j=1}^{2n}D_i(a_{i,j}(z)D_{j}w(z))+\sum_{i=1}^{2n}c_i(z)D_iw(z)=0.
\end{equation*}
Hence, thanks to Theorem \ref{8.1gt} and recalling \eqref{bordo}, it follows that 
\begin{equation*}
    \sup_{\Om_1}(u-v)\leq\sup_{\partial\Om\cup R}((u-v)^+).
\end{equation*}
Noticing that $u-v\leq 0$ on $\partial\Om$ and that $\overline{\intt{\Om_1}}=\overline\Om$, we obtain that 
\begin{equation*}
    \begin{split}
        u(z)-v(z)&\leq\sup_\Om(u-v)
        =\sup_{\Om_1}(u-v)
        \leq\sup_{\partial\Om_1}((u-v)^+)
        =\sup_{R}((u-v)^+)
    \end{split}
\end{equation*}
for any $z\in\Om$. We are left to show that $\sup_{R}((u-v)^+)\leq 0$. Indeed, assume by contradiction that $\sup_{R}((u-v)^+)> 0$. Since $R$ is compact,  there exists $z_0\in R$ such that $$u(z_0)-v(z_0)=\sup_{R}((u-v)^+)=\sup_{R}(u-v).$$ 
Moreover,  $z_0$ is a maximum point for $u-v$ on $\overline\Om$. Let us fix $y_0\in\partial\Om$ such that $\dfz(z_0)=\|z_0-y_0\|$. Then, thanks to Proposition \ref{utile}, it is easy to see that 
\begin{equation}\label{dfac}
    \dfz(z)=\|z-y_0\|
\end{equation}
for any $z$ belonging to $(y_0,z_0)$, the segment connecting $y_0$ and $z_0$. Let now $\nu:=\frac{y_0-z_0}{|y_0-z_0|}$. By \eqref{dfac} it holds that $v(z)<v(z_0)$ for any $z\in (y_0,z_0)$, and moreover
\begin{equation}\label{strict}
    D_\nu^+ v(z_0):=\lim_{h\to 0^+}\frac{v(z_0+h\nu)-v(z_0)}{h}<0.
\end{equation}
Since $z_0$ is a maximum point of $u-v$, it holds in particular that $D_\nu^+ u(z_0)\leq D_\nu^+ v(z_0)$, which implies, together with \eqref{strict}, that $D_\nu^+ u(z_0)=D_\nu u(z_0)<0$. This proves that $Du(z_0)\neq 0$. Since then $z_0$ is a regular point for $u$, the level set $\{z\in\Om\,:\,u(z)=u(z_0)\}$ is locally a $C^2$ hypersurface. Therefore there exists a small Euclidean ball $B$ such that $B$ is tangent to the level set at $z_0$ and moreover $B\subseteq\{z\in\Om\,:\,u(z)\geq u(z_0)\}$. Now, since by our assumptions the Finsler balls relative to $-K_0$ are uniformly convex and $C^2$, there exists $\varrho>0$ and $x_0\in\Om$ such that
\begin{equation}\label{pelota}
    \overline{B_{-K_0}(x_0,\varrho)}\subseteq\{z\in\Om\,:\,u(z)\geq u(z_0)\}
\end{equation}
and  $B_{-K_0}(x_0,\varrho)$ is tangent to $B$ at $z_0$. Indeed, fix a Finsler ball tangent to $B$ at $z_0$ relative to $-K_0$, say $B_F$. On one hand, the principal curvatures of $\partial B$ at $z_0$ are fixed. On the other hand, noticing that the principal curvatures of a $C_+^2$ convex set admit a positive lower bound, we can dilate and translate $B_F$ to make the curvature of $B_F$ as big as we want to ensure that \eqref{pelota} holds. Notice that
\begin{equation}\label{dmax}
    \dfz(z)\geq \dfz(z_0)
\end{equation}
for any $z\in \overline{B_{-K_0}(x_0,\varrho)}$. Indeed, if by contradiction there exists $z\in\overline{B_{-K_0}(x_0,\varrho)}$ such that $\dfz(z)<\dfz(z_0)$, then \eqref{pelota} would imply
\begin{equation*}
    u(z)-k\dfz(z)\geq u(z_0)-k\dfz(z)>u(z_0)-k\dfz(z_0),
\end{equation*}
a contradiction to the maximality of $z_0$. Let now $w_0\in\partial\Om$ be such that $\df(x_0)=\|x_0-w_0\|$, and let $b_0$ be the unique point of intersection between $\partial B_{-K_0}(x_0,\varrho)$ and the segment joining $w_0$ and $x_0$. Then by \eqref{pallastorta}, \eqref{dfac}, \eqref{dmax}, the choice of $b_0$ and the strict convexity of $K_0$, it holds that
\begin{equation*}
    \begin{split}
        \dfz(x_0)=\|x_0-w_0\|
        =\|x_0-b_0\|+\|b_0-w_0\|
        =\varrho+\dfz(b_0)
        \geq \varrho+\dfz(z_0).
    \end{split}
\end{equation*}
On the other hand, \eqref{pallastorta} and the triangle inequality imply
\begin{equation*}
    \begin{split}
        \dfz(x_0)\leq\|x_0-y_0\|
        \leq \|x_0-z_0\|+\|z_0-y_0\|
        =\varrho+\dfz(z_0).
    \end{split}
\end{equation*}
Putting together the previous inequalities we get that
\begin{equation}\label{intheend}
    \dfz(x_0)=\|x_0-y_0\|= \|x_0-z_0\|+\|z_0-y_0\|,
\end{equation}
from which in particular we conclude, exploiting again the strict convexity of $K_0$, that $x_0$ lies on $(y_0,x_0)$. Therefore, thanks to this fact, the first equality in \eqref{intheend} and Proposition \ref{utile}, we conclude that $z_0\in\intt{\Om_1}$, which is a contradiction. In the end we proved that
\begin{equation*}
    \sup_\Om u\leq \sup_{\partial\Om}|\varphi|+k\max_{\overline\Om}\dfz.
\end{equation*}
Since the converse estimate can be obtained in a similar way, the thesis is proved.
\end{proof}
\begin{remark}
    We point out that the proof of Proposition \ref{th:Cinfestimateconst} does not hold for $n\ge2$. Indeed, when $n\geq 2$, the Euclidean mean curvature $H_{\Sigma_{d}}$ in equation \eqref{perdopo2.0}  may blow down to $-\infty$ close to the ridge $R$ even though the Finsler mean curvature $H_{K_0,\Sigma_{d}}$ is strictly positive on $\Om_1$.
\end{remark}
\section{Existence of Lipschitz minimizers for the sub-Finsler functional $\mathcal{I}$}
\label{sc:mainth}
Thanks to the \emph{a priori} estimates of the previous section, together with Proposition \ref{gt} and the uniformity of the estimates with respect to $\eps\in(0,1)$ and $\eta\in (0,\eta_0)$, we are in position
to pass to the limit and find a solution to the sub-Finsler Prescribed Mean Curvature equation.  

\begin{comment}
\begin{theorem}
\label{th:main}
Let $K_0$ a convex body in $C^{2,\alpha}_+$ containing $0$ in its interior, for some $0<\alpha<1$. Let $\Omega \subset \rr^{2n}$ be a bounded domain with $ C^{2,\alpha}$ boundary. Let $\varphi \in C^{2,\alpha}(\overline{\Omega})$ and let $F\in C^{1,\alpha}(\overline\Om)$ be such that \eqref{taiwan} is satisfied. Assume that $H$ is a constant such that  conditions \eqref{in:hip} and \eqref{curvcond} hold. Then, for any $\varepsilon\in(0,1)$, the exists a function $u_\varepsilon\in C^2(\overline\Om)$ which solves \eqref{riemreg}. Moreover, there exists a constant $M>0$, independent of $\eps\in (0,1)$, such that any solution $u_\vareps$ to \eqref{riemreg} satisfies
\begin{equation}\label{unifbound}
\sup_{\Om} |u_\eps|+\sup_{\Om} |\nabla u_\eps|\le M.
\end{equation}
Finally, there exists a Lipschitz continuous minimizer $u \in \text{Lip}(\overline{\Omega})$ for $\mathcal{I}(\cdot)$ with $H$ constant such that $u= \varphi$ on $\partial \Omega$.
\end{theorem}
\end{comment}

\begin{theorem}
\label{th:main}
Let $K_0\in C^{\infty}_+$ be a convex body such that $0  \in \intt K_0$. Let $\Omega \subseteq \rr^{2n}$ be a bounded domain with $ C^{2,1}$ boundary. Let $\varphi \in C^{2,\alpha}(\overline{\Omega})$, for $0<\alpha<1$, and let $F\in C^{1,\alpha}(\overline\Om,\rr^{2n})$  be such that \eqref{taiwan} is satisfied. Assume that $H$ is a constant such that  \eqref{in:hip} and \eqref{curvcond} hold. Then, there exists $\eta_0\in (0,1)$ such that for any $\varepsilon\in(0,1)$ and any $\eta\in (0,\eta_0)$, there exists a function $u_{\varepsilon,\eta}\in C^{2,\alpha}(\overline\Om)$ which solves \eqref{riemreg}. Moreover, there exists a constant $M>0$, independent of $\eps\in (0,1)$ and $\eta\in(0,\eta_0)$, such that any solution $u_{\varepsilon,\eta}$ to \eqref{riemreg} satisfies
\begin{equation}\label{unifbound}
\sup_{\Om} |u_{\varepsilon,\eta}|+\sup_{\Om} |\nabla u_{\varepsilon,\eta}|\le M.
\end{equation}
Finally, there exists a Lipschitz continuous minimizer $u_0 \in \text{Lip}(\overline{\Omega})$ for the functional $\mathcal{I}$ defined in \eqref{eq:Ifun} with $u_0= \varphi$ on $\partial \Omega$.
\end{theorem}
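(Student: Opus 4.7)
The plan is to apply Proposition \ref{prop:Fex} to obtain classical solutions $u_{\varepsilon,\eta}\in C^{2,\alpha}(\overline\Om)$ to \eqref{riemreg}, and then to extract a Lipschitz limit as $\varepsilon,\eta\to 0$ via Arzel\`a--Ascoli. For the application of Proposition \ref{prop:Fex}, the required H\"older regularity and continuous $\sigma$-dependence of the coefficients are already provided by Lemma \ref{lm:holder}, so I only need to furnish a uniform $C^1$ bound for the family of solutions $u_\sigma$ to \eqref{sigmadir}. Here I would chain together the three a priori estimates of Section \ref{sc:aprioriestimate}: Proposition \ref{th:Cinfestimate} for the $L^\infty$-bound (using \eqref{in:hip}), Proposition \ref{prop:boundgradest} for the boundary gradient bound (using \eqref{curvcond}), and finally Proposition \ref{prop:maxprinc} for the interior gradient via a maximum principle (using $H$ constant and \eqref{taiwan}). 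The key point is that these propositions produce a threshold $\eta_0\in(0,1)$ and a constant $M>0$ simultaneously independent of $\sigma\in[0,1]$, $\varepsilon\in(0,1)$ and $\eta\in(0,\eta_0)$, which yields both the existence of $u_{\varepsilon,\eta}$ and the bound \eqref{unifbound}.

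Armed with the uniform bound \eqref{unifbound}, I next fix sequences $\varepsilon_k,\eta_k\to 0$ with $\eta_k\in (0,\eta_0)$ and apply Arzel\`a--Ascoli to extract a subsequence converging uniformly on $\overline\Om$ to some $u_0\in \mathrm{Lip}(\overline\Om)$. The uniform convergence preserves the boundary condition, so $u_0=\varphi$ on $\partial\Om$. By Banach--Alaoglu, I may further assume $\nabla u_{\varepsilon_k,\eta_k}\rightharpoonup^* \nabla u_0$ in $L^\infty(\Om,\rr^{2n})$.

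The last and most delicate step, which I expect to be the main obstacle, is to show that $u_0$ minimizes $\mathcal{I}$ over $\varphi+W^{1,1}_0(\Om)$. Since $u_{\varepsilon_k,\eta_k}$ minimizes $\mathcal{I}_{\varepsilon_k,\eta_k}$, for every Lipschitz competitor $v$ with $v=\varphi$ on $\partial\Om$ I have $\mathcal{I}_{\varepsilon_k,\eta_k}(u_{\varepsilon_k,\eta_k})\leq \mathcal{I}_{\varepsilon_k,\eta_k}(v)$, and the right-hand side converges to $\mathcal{I}(v)$ by dominated convergence since $v$ is fixed and $\varepsilon_k,\eta_k\to 0$. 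For the left-hand side I combine the elementary inequality $\|\nabla u+F\|_*\leq(\varepsilon^3+\|\nabla u+F\|_*^3)^{1/3}$ with the convexity of $\|\cdot\|_*$ in the gradient variable to invoke a lower semicontinuity result (e.g. Ioffe's theorem) for the integral functional $u\mapsto\int_\Om\|\nabla u+F\|_*\,dz$ under weak-$*$ $W^{1,\infty}$ convergence; the additional curvature term $\eta_k\int_\Om\sqrt{1+|\nabla u_{\varepsilon_k,\eta_k}+F|^2}\,dz$ is controlled by a constant times $\eta_k$ thanks to \eqref{unifbound} and therefore vanishes in the limit. Combining these facts yields $\mathcal{I}(u_0)\leq \mathcal{I}(v)$ for every Lipschitz $v$ matching $\varphi$ on $\partial\Om$, and a standard density argument (approximating any $v\in \varphi+W^{1,1}_0(\Om)$ by smooth functions in the $W^{1,1}$-norm and exploiting the equivalence between $\|\cdot\|_*$ and the Euclidean norm) extends the inequality to the full admissible class, completing the proof.
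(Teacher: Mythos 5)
Your proposal is correct and follows essentially the same scheme as the paper: combine Propositions \ref{th:Cinfestimate}, \ref{prop:boundgradest}, \ref{prop:maxprinc} with Lemma \ref{lm:holder} and Proposition \ref{prop:Fex} for uniform $C^1$ bounds and existence, extract a Lipschitz limit by Arzel\`a--Ascoli, and pass to the limit in the minimality inequality $\mathcal{I}_{\varepsilon_k,\eta_k}(u_{\varepsilon_k,\eta_k})\le\mathcal{I}_{\varepsilon_k,\eta_k}(v)$ using lower semicontinuity of $\mathcal I$ and the elementary bound $(\varepsilon^3+a^3)^{1/3}\le\varepsilon+a$. The only (immaterial) variation is technical: the paper deduces lower semicontinuity from strong $L^1$ convergence of $u_{\varepsilon_k,\eta_k}$ via \cite[Theorem 4.1.2]{MR2492985}, without needing the weak-$*$ $W^{1,\infty}$ compactness nor the Lipschitz-then-density detour you introduce, and it tests directly against all $v\in\varphi+W^{1,1}_0(\Om)$.
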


\begin{proof}
 By Proposition \ref{th:Cinfestimate}, Proposition \ref{prop:maxprinc} and Proposition \ref{prop:boundgradest}, there exists a constant $M>0$ such that, for any $\sigma\in[0,1]$,  any $0<\varepsilon<1$  and any $\eta\in(0,\eta_0)$ with $\eta_0>0$ small enough, then any solution $u\in C^{2,\alpha}(\overline\Om)$ to the problem \eqref{sigmadir} satisfies
\[
\sup_{\Om} |u|+\sup_{\Om} |\nabla u|\le M.
\]
Then by Proposition \ref{prop:Fex} there exists a solution $u_{\eps,\eta} \in C^{2,\alpha}(\bar{\Om})$ to
	\begin{equation*}
\begin{cases}\divv(\pi_\eps^h(\nabla u+F))+\eta\divv\left(\frac{\nabla u+\sigma F}{\sqrt{1+|\nabla u+\sigma F|^2}}\right)= H&\text{ in  }\Om\\
			u=\varphi &\text{ in } \partial\Om.
		\end{cases}
	\end{equation*}
Again by Proposition \ref{th:Cinfestimate},  Proposition \ref{prop:maxprinc} and Proposition \ref{prop:boundgradest}, we have that
\begin{equation}
\label{eq:estuneps}
    \sup_{\Om} |u_{\eps,\eta}|+\sup_{\Om} |\nabla u_{\eps,\eta}| \le M,
\end{equation}
where the constant $M>0$ is uniform in $0<\varepsilon<1$ and $\eta\in (0,\eta_0)$. Let $\{\eps_j\}_{j \in \nn}\subseteq (0,1)$ and $\{\eta_j\}_{j \in \nn}\subseteq (0,\eta_0)$ be sequences such that $\eps_j \to 0$ and $\eta_j \to 0$ as $j \to \infty $. Since $M$ is uniform in $\eps$ and $\eta$ by \eqref{eq:estuneps} we gain that $\sup_{\Om} |u_{\eps_j,\eta_j}|\le M$ and that for any $z_1,z_2\in\Omega$ 
\begin{equation}
\label{eq:LIP}
    |u_{\eps_j,\eta_j}(z_1)-u_{\eps_j,\eta_j}(z_2)|\le M|z_1-z_2|.
\end{equation}
Then, by Ascoli-Arzelà theorem there exists $u_0 \in C(\overline{\Om})$ such that $u_{\eps_j,\eta_j}\to u_0$ uniformly in $\overline\Om$. It is clear that $u=\varphi$ on $\partial\Om$. Moreover, taking the limit as $j\to 0$ in  \eqref{eq:LIP}, we gain that
\[
\sup_{z_1 \ne z_2} \frac{|u_{0}(z_1)-u_{0}(z_2)|}{|z_1-z_2|} \le M,
\]
thus $u_0$ is Lipschitz. We claim that $u_0$ is a minimizer for $\mathcal{I}$ defined in \eqref{eq:Ifun}. Indeed, we have that $\|u_{\eps_j,\eta_j}\|_{W^{1,1}(\Om)} \le M|\Om|$, $\|u_{0}\|_{W^{1,1}(\Om)} \le M|\Om|$  and $u_{\eps_j,\eta_j}$ converge to $u_0$ in $L^1(\Om)$. Moreover, the function $(p,(x,y))\to \|p+ F(x,y)\|_*$ is positive, continuous and convex in $p$. \begin{comment}
Indeed, 
\begin{equation*}
   \begin{split}
          \|\lambda p_1+(1-\lambda)p_2+ F\|_{*}=\|\lambda (p_1+F)+(1-\lambda)(p_2+F)\|_{*}
          &\le \lambda\|p_1+F\|_{*}+ (1-\lambda)\|p_2+F\|_{*},
    \end{split}
\end{equation*}
for $0\le\lambda \le1$.
\end{comment}
Therefore, by \cite[Theorem 4.1.2]{MR2492985}, $\mathcal{I}$ is lower semicontinuous with respect to the strong $L^1$-topology, from which we have that
\begin{equation}
\label{eq:lsc}
    \mathcal{I}(u_0) \le \liminf_{j\to \infty} \mathcal{I}(u_{\eps_j,\eta_j}).
\end{equation}
For each $v \in W^{1,1}(\Om)$ such that $v-\varphi \in W^{1,1}_0(\Om)$, it follows that
\begin{equation}
\begin{aligned}
    \mathcal{I}(u_{\eps_j,\eta_j})&=\int_{\Omega} \|\nabla u_{\eps_j,\eta_j} +F\|_{*} \, dz + \int_{\Omega} H u_{\eps_j,\eta_j} \, dz\\
    &\le \int_{\Omega} (\eps_j^3+\|\nabla u_{\eps_j,\eta_j}+F\|_{*}^3)^{\frac{1}{3}} \, dz + \int_{\Omega} H u_{\eps_j,\eta_j} \, dz+\eta_{j}\int_\Om \sqrt{1+|\nabla u_{\eps_j,\eta_j}+F|^2}\,dz\\
    &\le \int_{\Omega} (\eps_j^3+\|\nabla v +F\|_{*}^3)^{\frac{1}{3}} \, dz + \int_{\Omega} H v \, dz+\eta_{j}\int_\Om \sqrt{1+|\nabla v+F|^2}\,dz\\
 &\le  \eps_j |\Om|+\int_{\Omega}\|\nabla v +F\|_{*} \, dz + \int_{\Omega} H v \, dz+\eta_{j}\int_\Om \sqrt{1+|\nabla v+F|^2}\,dz,\\
\end{aligned}
\label{eq:minimires}
\end{equation}
where we have used the fact that the Dirichlet solution $u_{\eps_j,\eta_j} \in C^{2\,\alpha}(\bar{\Om})$ is a minimizer for the functional $ v \to \int_{\Omega} (\eps_j^3+\|\nabla v +F\|_{*}^3)^{\frac{1}{3}} + \int_{\Omega} H v +\eta_{j}\int_\Om \sqrt{1+|\nabla v+F|^2}\,dz$   for each $v \in W^{1,1}(\Om)$ s.t. $v-\varphi \in W^{1,1}_0(\Om)$. Passing to the liminf in \eqref{eq:minimires} and taking into account \eqref{eq:lsc}, we obtain $\mathcal{I}(u_0) \le \mathcal{I}(v)$ for each $v \in W^{1,1}(\Om)$ s.t. $v-\varphi \in W^{1,1}_0(\Om)$.  
\end{proof}

We now apply the same argument of the previous proof in $\hh^1$, using the height estimate provided by Proposition \ref{th:Cinfestimateconst} instead of the one given in Proposition \ref{th:Cinfestimate} to avoid condition \eqref{in:hip}, to obtain the following sharp result in the first Heisenberg group.

\begin{theorem}\label{th:main23}
Let $n=1$ and $K_0\in C^{\infty}_+$ be a convex body such that $0  \in \intt K_0$. Let $\Omega \subseteq \rr^{2}$ be a bounded domain with $ C^{2,1}$ boundary. Let $\varphi \in C^{2,\alpha}(\overline{\Omega})$, for $0<\alpha<1$, and let $F\in C^{1,\alpha}(\overline\Om,\rr^{2})$  be such that \eqref{taiwan} is satisfied. Assume that $H$ is a constant such that \eqref{curvcond}  holds. Then,  there exists $\eta_0\in (0,1)$ such that for any $\varepsilon\in(0,1)$ and any $\eta\in (0,\eta_0)$, there exists a function $u_{\varepsilon,\eta}\in C^{2,\alpha}(\overline\Om)$ which solves \eqref{riemreg}. Moreover, there exists a constant $M>0$, independent of $\eps\in (0,1)$  and $\eta\in(0,\eta_0)$, such that any solution $u_{\varepsilon,\eta}$ to \eqref{riemreg} satisfies
\begin{equation}\label{unifbound2}
\sup_{\Om} |u_{\eps,\eta}|+\sup_{\Om} |\nabla u_{\eps,\eta}|\le M.
\end{equation}
Finally, there exists a Lipschitz continuous minimizer $u_0 \in \text{Lip}(\overline{\Omega})$ for the functional $\mathcal{I}$  defined in \eqref{eq:Ifun} with $u_0= \varphi$ on $\partial \Omega$.
\end{theorem}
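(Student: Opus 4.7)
The proof will run along the same lines as that of Theorem \ref{th:main}, so the plan is essentially to \emph{assemble} the a priori estimates that were collected in Section \ref{sc:aprioriestimate} and then pass to the limit in the Finsler approximation scheme. The only substantive change with respect to Theorem \ref{th:main} is that, since $H$ is constant and $n=1$, we can replace the $L^\infty$-estimate of Proposition \ref{th:Cinfestimate} (which required the integral condition \eqref{in:hip}) by the barrier-based estimate of Proposition \ref{th:Cinfestimateconst}, which only uses \eqref{curvcond}. This is why hypothesis \eqref{in:hip} disappears from the statement.

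First, I would combine the three a priori estimates to reduce to Proposition \ref{gt}. More precisely, fix $\sigma\in[0,1]$, $\varepsilon\in(0,1)$ and $\eta\in(0,\eta_0)$, where $\eta_0\in (0,1)$ is chosen small enough so that both the boundary gradient barrier of Proposition \ref{prop:boundgradest} and the maximum principle of Proposition \ref{prop:maxprinc} apply. Any solution $u\in C^{2,\alpha}(\overline\Om)$ to \eqref{sigmadir} then satisfies: $\|u\|_{\infty,\Om}\le C_1$ by Proposition \ref{th:Cinfestimateconst} (here one uses only $n=1$, constant $H$ and \eqref{curvcond}); then $\|\nabla u\|_{\infty,\partial\Om}\le C_2$ by Proposition \ref{prop:boundgradest}, which needs the $L^\infty$-bound just obtained; and finally $\|\nabla u\|_{\infty,\Om}\le C_2+2\|f\|_{\infty,\Om}$ by Proposition \ref{prop:maxprinc}, which uses the hypothesis \eqref{taiwan} on $F$. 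All three constants are independent of $\sigma\in[0,1]$, $\varepsilon\in (0,1)$ and $\eta\in (0,\eta_0)$, so there exists $M>0$ with $\|u\|_{C^1(\overline\Om)}\le M$. Combined with the Hölder regularity and continuity in $\sigma$ of the coefficients $A_{i,j}^{\varepsilon,\eta}$ and $B^{\varepsilon,\eta}$ proved in Lemma \ref{lm:holder}, Proposition \ref{gt} yields the desired $u_{\varepsilon,\eta}\in C^{2,\alpha}(\overline\Om)$ solving \eqref{riemreg}, and \eqref{unifbound2} is immediate.

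Next, I would perform the limit $\varepsilon,\eta\to 0$. Pick sequences $\varepsilon_j\to 0$, $\eta_j\to 0$ with $\eta_j\in(0,\eta_0)$ and set $u_j:=u_{\varepsilon_j,\eta_j}$. Estimate \eqref{unifbound2} makes $\{u_j\}$ uniformly bounded and uniformly Lipschitz in $\overline\Om$, so by Ascoli--Arzelà (up to a subsequence) $u_j\to u_0$ uniformly on $\overline\Om$ with $u_0\in\mathrm{Lip}(\overline\Om)$ and $u_0=\varphi$ on $\partial\Om$. To conclude that $u_0$ minimizes $\mathcal I$ among $W^{1,1}$ maps with the given trace, I would copy the lower-semicontinuity argument from the proof of Theorem \ref{th:main}: the integrand $(p,z)\mapsto\|p+F(z)\|_*$ is continuous, nonnegative and convex in $p$, so by \cite[Theorem 4.1.2]{MR2492985} the functional $\mathcal I$ is lower semicontinuous with respect to strong $L^1$ convergence; since $u_j$ minimizes the approximating functional $\mathcal I_{\varepsilon_j,\eta_j}$ (plus the $\eta_j$-term), for any competitor $v\in W^{1,1}(\Om)$ with $v-\varphi\in W^{1,1}_0(\Om)$ one gets
\[
\mathcal I(u_j)\le \mathcal I_{\varepsilon_j,\eta_j}(u_j)+\eta_j\!\!\int_\Om\!\!\sqrt{1+|\nabla u_j+F|^2}\,dz\le \varepsilon_j|\Om|+\mathcal I(v)+\eta_j\!\!\int_\Om\!\!\sqrt{1+|\nabla v+F|^2}\,dz,
\]
and passing to the liminf yields $\mathcal I(u_0)\le\mathcal I(v)$.

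The real work is therefore already contained in Section \ref{sc:aprioriestimate}; the only subtle step is checking that Proposition \ref{th:Cinfestimateconst} can indeed take over the role of Proposition \ref{th:Cinfestimate}, in the sense that it produces an $L^\infty$-bound \emph{independent of $\varepsilon,\eta,\sigma$}. This is clear from the statement of Proposition \ref{th:Cinfestimateconst}, but the argument is genuinely two-dimensional: it relies on the link between Finsler and Euclidean mean curvatures given by Proposition \ref{prop:FMCEwrtE}, which in $\rr^2$ forces the Euclidean mean curvature of parallel hypersurfaces to have a definite sign on the smooth part of the Finsler distance function. I would therefore expect no essentially new difficulty in proving Theorem \ref{th:main23}: once the a priori estimates are quoted, the compactness and lower-semicontinuity steps are completely parallel to the proof of Theorem \ref{th:main} and can be transcribed almost verbatim.
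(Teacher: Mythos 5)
Your proposal matches the paper's own (one-sentence) proof exactly: the paper states that Theorem \ref{th:main23} follows by running the proof of Theorem \ref{th:main} with the height estimate of Proposition \ref{th:Cinfestimateconst} substituted for Proposition \ref{th:Cinfestimate}, and you have simply written out that substitution together with the unchanged barrier, maximum-principle, Leray--Schauder, and lower-semicontinuity steps. The order in which you chain the a priori estimates ($L^\infty$, then boundary gradient via Proposition \ref{prop:boundgradest}, then interior gradient via Proposition \ref{prop:maxprinc}) is the correct one, and the compactness and minimality arguments are transcribed faithfully from Theorem \ref{th:main}.
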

To conclude this section, according to \cite{MR2262784} we point out that the Dirichlet problem for the prescribed $K_0$-mean curvature equation can be equivalently stated by means of a weak formulation which takes into account the presence of the singular set. Indeed, given a bounded domain $\Om\subseteq\rr^{2n}$, $\varphi\in W^{1,1}(\Om)$, $H\in L^\infty(\Om)$ and $F\in L^1(\Om)$, we say that $u\in W^{1,1}(\Om)$ is a \emph{weak solution} to the Dirichlet problem for the prescribed $K_0$-mean curvature equation if $u-\varphi\in W^{1,1}_0(\Om)$ and 
\begin{equation}
    \int_{\Om_0}\|\nabla\phi\|_*\,dz+\int_{\Om\setminus \Om_0}\langle\pi(\nabla u+F),\nabla \phi\rangle\,dz+\int_\Om H\phi\,dz\geq 0
\end{equation}
for any $\phi\in W^{1,1}_0(\Om)$, where we recall that $\Om_0=\{\nabla u+F=0\}$. The equivalence between the two formulations is proved in \cite{MR2262784} for the sub-Riemannian setting and can be carried out for the sub-Finsler setting with slight modifications.

\begin{remark}
A deeper look to \cite{MR2305073,MR2094267} suggests that it should be possible to prove that the aforementioned results still hold only assuming that $K_0$ is a convex body in $C^{2,\alpha}_+$ with $0\in\intt{K_0}$, for some $0<\alpha<1$. Accordingly, it is reasonable that in Theorem \ref{th:main} the regularity of $\partial K_0$ can be weakened to $C^{2,\alpha}$, for some $0<\alpha<1$. 
\end{remark}

\section{A sharp existence result of Lipschitz minimizers in the sub-Riemannian setting}\label{sec:last}

As pointed out in the introduction, a Finsler approximation scheme for \eqref{int:pmce} cannot be arbitrarily chosen, since one needs to guarantee classical regularity of the resulting equations. Nevertheless, for a particular class of Finsler metrics, it is possible to choose a more natural approximation scheme. More precisely, let us consider the one-parameter family of   differential equations defined formally by
\begin{equation}\label{riemapprox6}
    \divv\left(\pi_{K_0}(\nabla u+F)\frac{\|\nabla u+F\|_*}{\sqrt{\eps^2+\|\nabla u+F\|^2_*}}\right)=H.
\end{equation}
We point out that, when $K_0$ is the Euclidean unit ball centered at the origin, \eqref{riemapprox6} reduces to the well-known elliptic approximating equation considered for instance in \cite{MR2262784} (cf. Remark \ref{reminfondo}).  In order to give to equation \eqref{riemapprox6} a pointwise meaning, we must impose \emph{a priori} that the function $\tilde G(p):=\|p\|_*\pi_{K_0}(p)$, which is $C^1$ outside the origin, admits a $C^1$ extension to the whole $\rr^{2n}$. This regularity hypothesis turns out to be equivalent to the fact that the left-invariant sub-Finsler structure induced by $K_0$ comes from an underlying left-invariant sub-Riemannian metric on the distribution $\mathcal{H}$ (cf. \cite{MR0208534}), or equivalently that $K_0$ is an ellipsoid centered at $0$. More precisely, it is easy to check that, if $\tilde G\in C^1(\rr^{2n},\rr^{2n})$, then $D\tilde G$ is necessarily a constant, symmetric and positive definite matrix,  and moreover
\begin{equation}\label{formaesplicita}
    \|p\|_*=\sqrt{p\cdot D\tilde G\cdot p^T}\qquad\text{and}\qquad\pi_K(p)=\frac{D\tilde G\cdot p^T}{\|p\|_*}
\end{equation}
for any $p\in\rr^{2n}$. When \eqref{formaesplicita} holds, a direct computation shows that \eqref{riemapprox6} is a well-defined, quasi-linear elliptic equation, so that in this setting a Euclidean regularization term as in \eqref{findiv24} is no longer needed. In order to solve the Dirichlet problem associated to \eqref{riemapprox6} it is then possible to replicate almost word-by-word the computations of Section \ref{sc:aprioriestimate}, with the advantage that the absence of the Euclidean curvature term makes the process easier. 
The main benefit of this new approximation is that, due to the absence of the Euclidean curvature term, a result analogous to Proposition \ref{th:Cinfestimateconst} actually holds for any $n\geq 1$. We include the proof for the sake of completeness. 
\begin{proposition}
\label{th:Cinfestimateconstriem}
Assume that $K_0\in C^{\infty}_+$ induces a left-invariant sub-Riemannian metric on $\hh^n$. Let $\Om\subseteq \rr^{2n}$ be a bounded domain with $C^{2,1}$ boundary, let $\varphi\in C^2(\overline\Om)$ and let $H$ be a constant which satisfies \eqref{curvcond}. There exists a
constant $C_1=C_1(K_0,\Om,\varphi,H,F)>0$, independent of $\sigma\in[0,1]$ and $\eps\in (0,1)$, such that, for any solution  $u\in C^2(\overline\Omega)$ to 
\begin{equation*}\label{sigmadirriem}
		\begin{cases} \divv\left(\pi_{K_0}(\nabla u+\sigma F)\frac{\|\nabla u+\sigma F\|_*}{\sqrt{\eps^2+\|\nabla u+\sigma F\|^2_*}}\right)=\sigma H&\text{ in  }\Om\\
			u=\sigma\varphi &\text{ in } \partial\Om
		\end{cases}
	\end{equation*}
it holds that
\[
\|u\|_{\infty,\Om}\leq C_1.
\]
\end{proposition}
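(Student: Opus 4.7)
The plan is to adapt the barrier argument of Proposition \ref{th:Cinfestimateconst} to arbitrary dimension $n\geq 1$, exploiting the crucial fact that in this sub-Riemannian approximation scheme the Euclidean regularization term has disappeared. Recall that the only obstruction to extending Proposition \ref{th:Cinfestimateconst} beyond $n=1$ was the presence of $\eta H_{\Sigma_{d(z)}}(z)$ in \eqref{perdopo2.0}, which could blow down to $-\infty$ near the ridge for $n\geq 2$; here this term is simply absent.

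First I fix $C_3=C_3(K_0,\Om,H)>0$ as in \eqref{curvcond2}, so that $|H|\leq H_{K_0,\partial\Om}(z_0)-3C_3$ for every $z_0\in\partial\Om$, and define the barrier
\begin{equation*}
v(z):=\sup_{\partial\Om}|\varphi|+k\,d_{K_0,\partial\Om}(z),\qquad z\in\intt{\Om_1},
\end{equation*}
where $k>0$ is to be chosen large. By \eqref{diff}, $v\in C^2(\intt{\Om_1})$, and $v\geq\sigma\varphi$ on $\partial\Om$. Next, I would reproduce \emph{verbatim} the homogeneity/asymptotic expansion of Proposition \ref{prop:boundgradest} applied to the operator in \eqref{riemapprox6} (the computation is actually cleaner because the Euclidean curvature term is gone and the exponent $2$ replaces $3$). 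This gives, uniformly in $\sigma\in[0,1]$, $\eps\in(0,1)$ and $z\in\intt{\Om_1}$,
\begin{equation*}
\bigl(\eps^2+\|\nabla v+\sigma F\|_*^2\bigr)^{3/2}\!\left(\divv\!\left(\pi_{K_0}(\nabla v+\sigma F)\tfrac{\|\nabla v+\sigma F\|_*}{\sqrt{\eps^2+\|\nabla v+\sigma F\|_*^2}}\right)-\sigma H\right)\leq k^{3}\bigl(\divv(\pi_{K_0}(\nabla d))+|H|\bigr)+o(k^{3})
\end{equation*}
as $k\to\infty$, where $d=d_{K_0,\partial\Om}$. Using \eqref{tange2} with $q\in D(z)$, together with \eqref{curvcond2}, one obtains
\begin{equation*}
\divv(\pi_{K_0}(\nabla d))(z)+|H|\leq -H_{K_0,\partial\Om}(q)+|H|\leq -3C_3<0,
\end{equation*}
so that for some $k$ depending only on $K_0,\Om,\varphi,H,F$ (and on a lower bound $k>\sup_\Om\|-F\|_*$ to ensure the equation is classical), $v$ is a strict subsolution on $\intt{\Om_1}$.

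Then I would linearize, as in the proof of Proposition \ref{th:Cinfestimateconst}, to show that $w:=u-v$ is a weak supersolution on $\intt{\Om_1}$ of a linear uniformly elliptic equation in divergence form, and apply Theorem \ref{8.1gt} together with \eqref{bordo} to obtain
\begin{equation*}
\sup_{\intt{\Om_1}}(u-v)\leq\sup_{\partial(\intt{\Om_1})}(u-v)^{+}=\sup_{R}(u-v)^{+},
\end{equation*}
since $u-v\leq 0$ on $\partial\Om$. To control $u-v$ on the ridge $R$ I replicate \emph{mutatis mutandis} the geometric argument at the end of the proof of Proposition \ref{th:Cinfestimateconst}: if the supremum above were strictly positive, a maximizer $z_0\in R$ would have $Du(z_0)\neq 0$ by comparison with the one-sided directional derivative of $v$ along the direction from $z_0$ to a foot-point $y_0\in D(z_0)$; then, using the uniform $C^2_+$ convexity of the Finsler balls relative to $-K_0$, one inscribes a Finsler ball $\overline{B_{-K_0}(x_0,\varrho)}\subseteq\{u\geq u(z_0)\}$ tangent to the level set $\{u=u(z_0)\}$ at $z_0$, and the triangle inequality plus the strict convexity of $K_0$ force $z_0$ to lie in the segment $(y_0,x_0)\subseteq\intt{\Om_1}$ via Proposition \ref{utile}, a contradiction.

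The main point, and the only substantive obstacle compared with the $n=1$ case of Proposition \ref{th:Cinfestimateconst}, is the verification of the subsolution inequality \emph{inside} $\intt{\Om_1}$, where one must know that the $K_0$-mean curvature of the level sets of $d_{K_0,\partial\Om}$ is bounded below by $H_{K_0,\partial\Om}(q)$ at the closest point $q$; this is exactly the content of \eqref{tange2}, which was established precisely for points of $\intt{\Om_1}$. Everything else — the linearization, the weak maximum principle, and the ridge argument — carries over verbatim from the proof of Proposition \ref{th:Cinfestimateconst}, and the resulting bound $C_1$ depends only on $K_0,\Om,\varphi,H,F$, uniformly in $\sigma\in[0,1]$ and $\eps\in(0,1)$.
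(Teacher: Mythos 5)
Your proposal is correct and follows essentially the same route as the paper: the same barrier $v=\sup_{\partial\Om}|\varphi|+k\,d_{K_0,\partial\Om}$, the same asymptotic expansion adapted to the exponent $2$ instead of $3$, the same use of \eqref{tange2}/\eqref{curvcond2} (exploiting that the $\eta$-term is absent so the restriction to $n=1$ disappears), and the same linearization, weak maximum principle, and ridge argument carried over verbatim from Proposition \ref{th:Cinfestimateconst}.
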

\begin{proof}
Let $C_3=C_3(K_0,\Om,H)$ be as in \eqref{curvcond2}.
Let us define the function $v:\intt{\Om_1}\scu\rr$ as in \eqref{barrierafinale}, that is
\begin{equation*}
    v(z):=\sup_{\partial\Om}|\varphi|+k\dfz(z)
\end{equation*}
for any $z\in\Om_1$, where $k>0$ has to be chosen and $\Om_1$ is the set defined in \eqref{om1}. Again we know (cf. \eqref{diff}) that $v\in C^2(\intt{\Om_1})$.  We repeat again, with minor modifications, the computations of the proof of Proposition \ref{prop:boundgradest} up to \eqref{perdopo}. As in the proof of Proposition \ref{th:Cinfestimateconst}, being $H$ constant, we can choose $C_5=0$ in \eqref{Hnocost}. Moreover, since $\eta=0$, the analogous of \eqref{perdopo} becomes
\begin{equation*}\label{perdopo3.0}
    \begin{split}
        \divv(\pi(\nabla d))(z)+|H|(z_0)
        &=-H_{K_0,\Sigma_{d(z)}}(z)+|H|(z_0)\\
        &\leq-H_{K_0,\partial\Om}(z_0)+|H|(z_0)\\
        &\leq-3C_3<0.
    \end{split}
\end{equation*}
Hence there exists $k>0$, independent of $\eps\in(0,1)$, $\sigma\in[0,1]$ and $z\in\Om_1$, such that $v$ is a subsolution to \eqref{sigmadirriem} on $\intt{\Om_1}$. The thesis then follows \emph{verbatim} as in the proof of Proposition \ref{th:Cinfestimateconst}.
\end{proof}

Therefore, in the sub-Riemannian setting, we can exploit Proposition \ref{th:Cinfestimateconstriem} to avoid condition \eqref{in:hip}, so that the following sharper analogous to Theorem \ref{th:main} holds.
\begin{theorem}
\label{th:main2}
Assume that $K_0\in C^{\infty}_+$ induces a left-invariant sub-Riemannian metric on $\hh^n$.
Let $\Omega \subseteq \rr^{2n}$ be a bounded domain with $ C^{2,1}$ boundary. Let $\varphi \in C^{2,\alpha}(\overline{\Omega})$, for $0<\alpha<1$, and let $F\in C^{1,\alpha}(\overline\Om,\rr^{2n})$  be such that \eqref{taiwan} is satisfied. Assume that $H$ is a constant such that \eqref{curvcond} holds. Then, for any $\varepsilon\in(0,1)$, there exists a function $u_\varepsilon\in C^{2,\alpha}(\overline\Om)$ which solves the Dirichlet problem associated to \eqref{riemapprox6} with boundary datum $\varphi$.
 Moreover, there exists a constant $M>0$, independent of $\eps\in (0,1)$, such that any solution $u_{\eps}$ to \eqref{riemapprox6} satisfies
\begin{equation*}\label{unifbound3}
\sup_{\Om} |u_\eps|+\sup_{\Om} |\nabla u_\eps|\le M.
\end{equation*}
Finally, there exists a Lipschitz continuous minimizer $u_0 \in \text{Lip}(\overline{\Omega})$ for the functional $\mathcal{I}$ defined in \eqref{eq:Ifun} with $u_0= \varphi$ on $\partial \Omega$.
\end{theorem}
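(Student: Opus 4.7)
The plan is to follow verbatim the scheme of Theorem \ref{th:main}, replacing the Finsler-plus-Euclidean approximation \eqref{findiv24intro} by the simpler scheme \eqref{riemapprox6}. Under the sub-Riemannian assumption, identity \eqref{formaesplicita} makes the map $\tilde G(p)=\|p\|_*\pi_{K_0}(p)$ linear in $p$, so \eqref{riemapprox6} reduces to
$$\divv\left(\frac{D\tilde G\,(\nabla u+F)^T}{\sqrt{\eps^2+(\nabla u+F)\,D\tilde G\,(\nabla u+F)^T}}\right)=H,$$
a classical quasi-linear elliptic equation with H\"older-continuous coefficients whose ellipticity constant, on bounded sets of $p$, is bounded below uniformly in $\eps\in(0,1)$. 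This intrinsic ellipticity makes the Euclidean regularization parameter $\eta$ of Section \ref{sc:aprioriestimate} unnecessary.

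I would then apply the $\eta=0$ version of Proposition \ref{gt} (whose proof carries over unchanged) to reduce the existence of $u_\eps\in C^{2,\alpha}(\overline\Om)$ to uniform $C^1(\overline\Om)$ estimates for the $\sigma$-parameter family
$$\begin{cases}\divv\left(\pi_{K_0}(\nabla u+\sigma F)\dfrac{\|\nabla u+\sigma F\|_*}{\sqrt{\eps^2+\|\nabla u+\sigma F\|_*^2}}\right)=\sigma H&\text{in }\Om,\\ u=\sigma\varphi&\text{on }\partial\Om,\end{cases}$$
with $\sigma\in[0,1]$. The uniform $C^0$ bound is precisely Proposition \ref{th:Cinfestimateconstriem}, and it is this bound that permits dropping the integral hypothesis \eqref{in:hip}. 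The interior gradient estimate comes from the argument of Proposition \ref{prop:maxprinc}, which does not in fact use the $\eta$ term: testing the equation against $D_kv$, integrating by parts twice and invoking \eqref{taiwan} shows that $D_ku+\sigma f_k$ solves a uniformly elliptic linear equation with bounded coefficients, to which Theorem \ref{8.1gt} applies. The boundary gradient estimate follows from the barrier $w^+=k\,d_{K_0,\partial\Om}+\sigma\varphi$ exactly as in Proposition \ref{prop:boundgradest}; here the leading-order expansion collapses to $-H_{K_0,\Sigma_{d(z)}}+|H|$, and \eqref{curvcond} alone makes $w^+$ a strict supersolution on a thin tubular neighborhood of $\partial\Om$.

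Once the uniform $C^1$ bound is in hand, a sequence $u_{\eps_j}$ with $\eps_j\to 0$ is equi-Lipschitz, and Arzel\`a--Ascoli yields a uniform limit $u_0\in\text{Lip}(\overline\Om)$ with $u_0=\varphi$ on $\partial\Om$. To prove that $u_0$ minimizes $\mathcal{I}$ I would repeat the end of the proof of Theorem \ref{th:main}: each $u_{\eps_j}$ minimizes the strictly convex functional $v\mapsto\int_\Om\sqrt{\eps_j^2+\|\nabla v+F\|_*^2}\,dz+\int_\Om Hv\,dz$ in the Dirichlet class, so combining the $L^1$-lower semicontinuity of $\mathcal{I}$ (via convexity of $p\mapsto\|p+F\|_*$ and \cite[Theorem 4.1.2]{MR2492985}) with the elementary inequalities $\|\cdot\|_*\leq\sqrt{\eps^2+\|\cdot\|_*^2}\leq\eps+\|\cdot\|_*$ yields $\mathcal{I}(u_0)\leq\mathcal{I}(v)$ for every admissible $v$. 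The only genuinely delicate point is Proposition \ref{th:Cinfestimateconstriem} itself: what blocks the $n\geq 2$ case in the cubic scheme is the possible blow-up, near the ridge, of the Euclidean mean curvature of the parallel hypersurfaces, but this contribution is absent here since $\eta=0$, and the ridge-handling trick from the proof of Proposition \ref{th:Cinfestimateconst} then carries the barrier argument through without modification.
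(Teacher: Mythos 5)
Your proof is correct in structure and follows the route the paper intends: ellipticity of \eqref{riemapprox6} for each fixed $\eps$ plus H\"older coefficients feed the Leray--Schauder machinery of Proposition \ref{gt} (with $\eta=0$), the uniform $C^0$ bound is Proposition \ref{th:Cinfestimateconstriem}, the interior gradient bound comes from the divergence-structure maximum-principle argument of Proposition \ref{prop:maxprinc} (which indeed needs no $\eta$), the boundary gradient bound comes from the barrier of Proposition \ref{prop:boundgradest} whose leading asymptotics reduce to $-H_{K_0,\Sigma_{d(z)}}+|H|<0$ under \eqref{curvcond}, and the passage $\eps_j\to 0$ via Arzel\`a--Ascoli together with lower semicontinuity of $\mathcal I$ and the inequalities $\|\cdot\|_*\le\sqrt{\eps^2+\|\cdot\|_*^2}\le\eps+\|\cdot\|_*$ gives the Lipschitz minimizer.

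One remark in your proposal is, however, false and should not be relied upon: the ellipticity constant of \eqref{riemapprox6} is \emph{not} bounded below uniformly in $\eps$ on bounded $p$-sets. Writing $q=p+F$ and $A=D\tilde G$, the coefficient matrix is $\mathcal A_{ij}=\frac{A_{ij}}{\sqrt{\eps^2+qAq^T}}-\frac{(Aq^T)_i(Aq^T)_j}{(\eps^2+qAq^T)^{3/2}}$, and Cauchy--Schwarz in the $A$-inner product gives the sharp lower bound $\xi\mathcal A\xi^T\ge\frac{\eps^2\,\xi A\xi^T}{(\eps^2+qAq^T)^{3/2}}$, which is proportional to $\eps^2$ for $q$ bounded away from $0$ and degenerates as $\eps\to 0$. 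This is exactly the phenomenon the paper highlights (for $\eta\to 0$) in the remark preceding Proposition \ref{prop:maxprinc}. The correct reason the Euclidean regularization $\eta$ can be dropped is different: for each \emph{fixed} $\eps>0$, \eqref{riemapprox6} is strictly elliptic on all of $\Om$, including where $\nabla u+F=0$ (there $\mathcal A=A/\eps$), whereas the cubic scheme \eqref{asx} with $\eta=0$ is only \emph{degenerate} elliptic at the singular set. Fortunately your argument never actually uses uniform-in-$\eps$ ellipticity --- the uniform gradient estimates come from the maximum principle and the barrier, as you correctly state --- so the misstatement is harmless, but the motivation for dropping $\eta$ should be phrased as above.
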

\bibliography{references}
\end{document}